\numberwithin{equation}{section}
\newtheorem{thm}{Theorem}[section]
\newtheorem{lem}[thm]{Lemma}
\newtheorem{prop}[thm]{Proposition}
\newtheorem{fact}[thm]{Fact}
\newtheorem{cor}[thm]{Corollary}
\newtheorem{defn}[thm]{Definition}
\theoremstyle{remark}
\newtheorem{rem}[thm]{Remark}
\newtheorem{exam}[thm]{Example}
\def \N {\mathbb N}
\def \T {\mathcal T}
\def \TT {\mathsf T}
\def \TTT {\boldsymbol{\mathsf T}}
\def \bT {\boldsymbol\T}
\def \O {\mathcal O}
\def \Z {\mathbb Z}
\def \R {\mathcal R}
\def \K {\mathcal K}
\def \F {\mathcal F}
\def \Q {\mathcal Q}
\def \P {\mathcal P}
\def \S {\mathcal S}
\def \eps {\varepsilon}
\def \sq {sequence}
\def \tl {topological}
\def \im {invariant measure}
\def \inv {invariant}
\def \ds {dynamical system}
\def \incr {\mathsf{incr}}
\begin{document}
\title{Multiorders in amenable group actions}

\author[T.\ Downarowicz, P.\ Oprocha, M.\ Wi\c{e}cek and G.\ Zhang]{Tomasz Downarowicz, Piotr Oprocha, Mateusz Wi\c{e}cek and Guohua Zhang}

\address{\vskip 2pt \hskip -12pt Tomasz Downarowicz}

\address{\hskip -12pt Faculty of Pure and Applied Mathematics, Wroc\l aw University of Technology, Wroc\l aw, Poland}

\email{downar@pwr.edu.pl}

\address{\vskip 2pt \hskip -12pt Piotr Oprocha}

\address{\hskip -12pt Faculty of Applied Mathematics, AGH University of Science and Technology, Krak\'ow, Poland}

\email{oprocha@agh.edu.pl}

\address{\vskip 2pt \hskip -12pt Mateusz Wi\c{e}cek}

\address{\hskip -12pt Faculty of Pure and Applied Mathematics, Wroc\l aw University of Technology, Wroc\l aw, Poland}

\email{mateusz.wiecek@pwr.edu.pl}

\address{\vskip 2pt \hskip -12pt Guohua Zhang}

\address{\hskip -12pt School of Mathematical Sciences and Shanghai Center for Mathematical Sciences, Fudan University, Shanghai 200433, China}

\email{chiaths.zhang@gmail.com}

\subjclass[2010]{Primary 37A15, 37A35; Secondary 43A07}
\keywords{Countable amenable group, measure-preserving action, invariant random order, multiorder, conditional entropy, Pinsker factor, orbit equivalence.}

\begin{abstract}  The paper offers a thorough study of multiorders and their applications to measure-preserving actions of countable amenable groups.
	By a~{\em multiorder} on a~countable group we mean any probability measure $\nu$ on the collection $\tilde\O$ of linear orders of type $\Z$ on $G$, invariant under the natural action of $G$ on such orders. Multiorders exist on any countable amenable group (and only on such groups) and every multiorder has the F\o lner property, meaning that almost surely the order intervals starting at the unit form a F\o lner \sq. Every free measure-preserving $G$-action $(X,\mu,G)$ has a~multiorder $(\tilde\O,\nu,G)$ as a factor and has the same orbits as the $\Z$-action $(X,\mu,S)$, where $S$ is the \emph{successor map} determined by the multiorder factor. Moreover, the sub-sigma-algebra $\Sigma_{\tilde\O}$ associated with the multiorder factor is invariant under $S$, which makes the corresponding $\Z$-action $(\tilde\O,\nu,\tilde S)$ a factor of $(X,\mu,S)$. We prove that the entropy of any $G$-process generated by a finite partition of $X$, conditional with respect to $\Sigma_{\tilde\O}$, is preserved by the orbit equivalence with $(X,\mu,S)$. Furthermore, this entropy can be computed in terms of the so-called random past, by a formula analogous to $	h(\mu,T,\P)=H(\mu,\P|\P^-)$ known for $\Z$-actions. The above fact is then applied to prove a variant of a result by Rudolph and Weiss \cite{RW}. The original theorem states that orbit equivalence between free actions of countable amenable groups preserves conditional entropy with respect to a~sub-sigma-algebra $\Sigma$, as soon as the ``orbit change'' is measurable with respect to $\Sigma$. In our variant, we replace the measurability assumption by a~simpler one: $\Sigma$ should be invariant under both actions and the actions on the resulting factor should be free. In conclusion we provide a characterization of the Pinsker sigma-algebra of any $G$-process in terms of an appropriately defined remote past arising from a multiorder.
	
	The paper has an appendix in which we present an explicit construction of a~particularly regular (uniformly F\o lner) multiorder based on an ordered dynamical tiling system of $G$. 
\end{abstract}

\maketitle

\tableofcontents

\section{Introduction: motivation and organization of the paper}
The additive group $\Z$ of integers has two very important properties associated to the interplay between the order structure and amenability: 
\begin{enumerate}[(1)]
	\item $\Z$ is orderable; the standard order satisfies $n<m\iff n+k<m+k$,
	\item the order intervals $[0,n]$ form a F\o lner \sq\footnote{A F{\o}lner sequence in a group $G$ is a sequence of finite subsets $F_n\subset G$ such that $\lim\limits_{n\to\infty}\frac{|F_n\cap gF_n|}{|F_n|}=1$, for any $g\in G$.}. 
\end{enumerate}
These two properties play a key role in calculating dynamical entropy and characterizing the Pinsker factor of a stationary $\Z$-process. In particular, if $\P$ is a finite measurable partition of a probability space $(X,\Sigma,\mu)$ with an action of a measure-automorphism $T$, then
the following formula holds 
\begin{equation}\label{eq0}
	h(\mu,T,\P)=H(\mu,\P|\P^-),
\end{equation}
where $h(\mu,T,\P)$ is the measure-theoretic entropy of the process generated by the partition $\P$ and $\P^-=\P^{(-\infty,-1]}=\bigvee_{i=1}^\infty T^i(\P)$ is the \emph{past} of this process.\footnote{It might be confusing that the past $\P^-$ depends on the positive iterates $T^i(\P)$.
	It is so, because these \emph{positive} iterates of the partition describe the behavior of the \emph{backward} orbit (i.e.\ the \emph{past}) of points: if $A$ is an atom of $\P$ then 
	\begin{equation*}
		x\in T^i(A) \iff T^{-i}(x)\in A.
	\end{equation*}
	\vspace{-10pt}
} The symbol $H(\mu,\P|\P^-)$ denotes the Shannon (static) entropy of $\P$ conditional with respect to the sigma-algebra $\P^-$. Moreover, the Pinsker sigma-algebra of this process, defined as the largest \inv\ sub-sigma-algebra of $\Sigma$ on which the action has entropy zero, is characterized by the formula
\begin{equation}\label{eq1}
	\Pi_{T}(\P)=\bigcap_n\P^{(-\infty,-n]}.
\end{equation}
The above intersection is often referred to as the \emph{remote past} of the process.

\medskip
Let now $G$ denote a countable group. By a \emph{total order} on $G$ we will mean a transitive relaton $\prec$ such that for every $a,b\in G$ exactly one of the alternatives holds: either $a\prec b$ or $b\prec a$, or $a=b$. Total order on $G$ is \emph{invariant} if the implication $a\prec b\Rightarrow ag \prec bg$ is true for all $a,b,g\in G$. A group admitting an invariant total order is called \emph{orderable}. In general, $G$ need not be orderable, and if it is, the order intervals need not form a F\o lner \sq. For example, $\Z^2$ is orderable but no invariant order has the property (2) (the reader may easily verify that there is no invariant order on $\Z^2$ whose every order interval is finite). 
In~1975, John Kieffer~\cite{K} introduced an interesting substitute of an invariant order, the \emph{invariant random order}. 
\begin{defn}
	An \emph{invariant random order} (IRO) on $G$ is a probability space $(\O,\nu)$, where $\O$ is a measurable family of total orders $\prec$ on $G$ (represented as $\{0,1\}$-valued functions on $G\times G$) and $\nu$ is a Borel probability measure supported by $\O$, invariant under the action of $G$ on $\O$, defined by the rule $(g,\prec)\mapsto\ \prec'$, where
	\begin{equation*}
		a\ \prec'\ b \iff ag\ \prec\ bg \ \ (a,b\in G).
	\end{equation*}
\end{defn}
It is easy to show that IRO exists on any countable group. For instance, take the i.i.d.\ $G$-process $(\mathsf X_g)_{g\in G}$ with values in the interval $[0,1]$ distributed according to the Lebesgue measure. Almost every realization $\omega$ of this process is an injective function $g\mapsto\omega_g$ from $G$ to $[0,1]$. The linear ordering of the values determines a linear ordering $\prec_\omega$ of $G$, as follows:
\begin{equation*}
	a\,\prec_\omega\,b\ \iff\ \omega_a<\omega_b, \ \ (a,b\in G).
\end{equation*}
It is elementary to verify that $\omega\mapsto\ \prec_\omega$ defines a measure-theoretic factor of the i.i.d.\ $G$-process, which is in fact an IRO on $G$ of type $\mathbb Q$ (i.e.\ for $\nu$-almost every $\omega$ one has that for any $a,b\in G$ with $a\prec_\omega b$ there exists $c\in G$ such that $a\prec_\omega c\prec_\omega b$). 

If $G$ is a countable amenable group\footnote{Throughout this paper, by a countable amenable group we will mean an infinite countable discrete group in which there exists a F{\o}lner sequence.}, then so defined IRO, being a factor of a Bernoulli process, has positive entropy (see e.g.\ \cite{OW2}).
Invariant random orders have been successfully applied to the computation of Kolmogorov--Sinai entropy, for proving a version of the Shannon--McMillan--Breiman theorem for actions of countable amenable groups (see~\cite{K}) and further refinements (see e.g. \cite{AMR}).

In this paper we propose a refined version of IRO which we call ``multiorder''. A~multiorder is an IRO with the additional property that all orders $\prec$ in the support of $\nu$ are of type~$\Z$.

\smallskip
Section 2 contains the rigorous definition and several basic facts on multiorders. It turns out that amenability of a countable group $G$ is equivalent to the existence of a multiorder on $G$. Moreover, any such multiorder has the F\o lner property (see Definition~\ref{fp} and Theorem~\ref{TM}), which is a very desirable feature, analogous to the condition (2) listed at the begining of this section in the context of the classical order on $\Z$. 

\smallskip
In Section 3 we show that  multiorders are strongly related to the well-known fact that any measure-preserving action of a countable amenable group $G$ is orbit equivalent to a $\Z$-action (see~\cite{OW}). This relation is captured in our Theorem \ref{motooe}. In particular, this implies that multiorders of entropy zero exist on any countable amenable group.
\smallskip

Another difference between a multiorder and an IRO becomes apparent in their applications to studying measure-preserving $G$-actions. So far (in both papers \cite{K} and \cite{AMR}), an IRO was associated to the group rather than the considered $G$-action. One can say that it played the role of an \emph{external} object. On the contrary, we often assume that a multiorder is a measure-theoretic factor of the $G$-action in question. In fact, we show that if $G$ is amenable, then any free $G$-action\footnote{A measure-theoretic $G$-action $(X,\mu,G)$ is free if the stabilizer of $\mu$-almost every $x\in X$ is trivial, i.e.\ $\{g\in G: g(x)=x\}=\{e\}$.} has a multiorder factor (see Corollary \ref{33}). Thus, multiorder is treated as an \emph{internal} feature of the given $G$-action. Our approach to multiorders leads to more complicated relative results involving disintegration of the measure with respect to the multiorder factor (which is trivial in case of an external IRO). As we shall see, the relative results are quite useful. 

\smallskip

Following these lines, our Section 4 starts with the definition of a ``multiordered $G$-action'', as one equipped with a fixed measure-theoretic multiorder factor. The first main result of the paper is Theorem \ref{newentropy} providing a formula for the entropy relative to the multiorder factor in terms of the random past, analogous to~\eqref{eq0}. The formula reduces to the one given in \cite{AMR} if the multiorder is joined with the action independently (via the product joining). 
\smallskip

In Section 5, building upon Theorem \ref{motooe} (saying, roughly speaking, that a multiordered system is orbit equivalent to a specific $\Z$-action which also factors to the same multiorder, but now regarded with a respective action of $\Z$), we show that the entropy of a multiordered $G$-action conditional given the multiorder is preserved by the orbit equivalence (see Theorem~\ref{eq_entropies}). This is in fact a special case of a more general theorem, due to Rudolph and Weiss (\cite[Theorem 2.6]{RW}), but our proof is totally different.
As a consequence, we derive a surprising fact that, although orbit equivalence usually does not preserve the entropy, the difference between the entropy of a multiordered $G$-action and that of the respective $\Z$-action comes exclusively from changing the dynamics on the multiorder factor (see~Corollary~\ref{surp}).

\smallskip
In Section 6 we show that the mentioned above theorem by Rudolph and Weiss can actually be derived from our Theorem \ref{eq_entropies}. The original theorem states that orbit equivalence between actions of countable amenable groups $G$ and $\Gamma$ preserves the conditional entropy with respect to a $G$-invariant sub-sigma-algebra $\Sigma_Y$, as soon as the actions are free and the ``orbit change'' is measurable with respect to~$\Sigma_Y$. In our variant, we replace the measurability assumption by a simpler one: $\Sigma_Y$ should be invariant under both actions and the actions on the resulting factor $(Y,\nu)$ should be free. Although formally our assumption is slightly stronger, it is technically simpler and easier to check. Also, our proof is completely different and much shorter. In this manner, multiorders emerge as a useful tool in giving new proofs of advanced facts about orbit equivalence.

\smallskip
Finally, in Section 7, we focus on the Pinsker factor of a multiordered $G$-action. It follows immediately from the results of Section 5, that the Pinsker factor relative to the multiorder is preserved by the orbit equivalence to the respective $\Z$-action (Theorem \ref{pinsker_mult}). In this section, however, the best results are obtained for an arbitrary (i.e.\ not necessarily multiordered) $G$-action, by passing to the product joining with a multiorder (which is a multiordered $G$-action). In some sense, this takes us back to treating multiorder as an external object, but our methods depend on the relative results of Section~5. It is so because in the proof we pass to the $\Z$-action orbit-equivalent to the product $G$-action and this $\Z$-action is no longer a product joining; it has the structure of a skew product. In our concluding Theorem~\ref{R-s} we prove that the (unconditional) Pinsker sigma-algebra of any $G$-process can be identified as follows: a set $A$ is measurable with respect to the Pinsker sigma-algebra of a $G$-process if and only if, for some (equivalently any) multiorder $(\tilde{\O},\nu,G)$, $A$ is measurable with respect to the remote past evaluated along $\nu$-almost every order. This theorem sheds a new light on the Pinsker factor even in case of a classical $\Z$-process. The only known proof of the fact that the remote past and the remote future of a $\Z$-process are the same invariant sigma-algebra relies heavily on entropy theory. Up to date no purely measure-theoretic proof has been found (i.e.\ a proof based exclusively on the analysis of the sigma-algebras). Until such a proof is found, one cannot claim that we fully understand this phenomenon. Note that the remote future becomes the remote past if we replace the standard order on $\Z$ by its reverse. Our Theorem \ref{R-s} (although it does not bring us any closer to finding a measure-theoretic proof) makes the mystery even more puzzling: there exists a vast collection of non-standard multiorders on $\Z$ (see e.g.\ Example \ref{Z-ord}) which allow to identify the Pinsker factor.

\medskip

The paper has an Appendix devoted to a brief summary of the theory of tilings of amenable groups and in which we introduce the notion of an  \emph{ordered tiling system}. Next we provide an effective construction of a multiorder arising from an ordered tiling system. We show that the resulting multiorder always enjoys a stronger version of the F\o lner property which we call \emph{uniform F\o lner property}. In this manner we prove a strengthening of Theorem \ref{exi} (Corollary~\ref{muo1}) which asserts that on any countable amenable group there exists a uniformly F\o lner multiorder of entropy zero. At the same time, we demonstrate that the existence of multiorders on a countable amenable group can be viewed as a phenomenon independent of orbit equivalence to $\Z$-actions.
\smallskip

The research of this paper is continued in \cite{DW}, where multiorders play a crucial role in the study of asymptotic pairs and their relation to entropy in topological $G$-actions and lead to establishing further analogs of the results known for $\Z$-actions.
\section{The concept of a multiorder}
\begin{defn}\label{oz}
	Let $G$ be an infinite countable set. A linear order $\prec$ on $G$ is \emph{of type} $\Z$ if every order interval $[a,b]^{\prec}=\{a,b\}\cup\{g\in G: a\prec g\prec b\}$ ($a,b\in G,\ a\prec b$) is finite, and there are no minimal or maximal elements in $G$. In other words, $(G,\prec)$ is order isomorphic to $(\Z,<)$.
\end{defn}
If $\prec$ is an order of type $\Z$, and $[a,b]^\prec$ is an order interval, then by $|[a,b]^\prec|$ we will denote its cardinality and call it the \emph{length} of $[a,b]^\prec$.
\smallskip

The set $\tilde\O$, of all orders of $G$ of type $\Z$ is a subset of the family of all relations on~$G$, which in turn can be identified with $\{0,1\}^{G\times G}$. Thus, $\tilde\O$ inherits from $\{0,1\}^{G\times G}$ a natural (metrizable and separable) \tl\ structure. By an easy proof, $\tilde\O$ is a nonempty measurable subset of $\{0,1\}^{G\times G}$.

\begin{defn}\label{mo}
	Let $G$ be an infinite countable group. This group acts on $\tilde\O$ by homeomorphisms as follows: for $\prec\ \in\tilde\O$ and $g\in G$ the image $\prec'\,=g(\prec)$ is given by 
	\begin{equation}\label{ac}
		a\prec' b\iff ag\prec bg
	\end{equation}
	(it is elementary to see that $\prec'$ is again an order of type $\Z$).
\end{defn}

We are in a position to introduce the key notion of this paper. 

\begin{defn}
	Let $\nu$ be a $G$-\inv\ Borel probability measure supported by $\tilde\O$. By a \emph{multiorder} (on $G$) we mean the measure-preserving $G$-action $(\tilde\O,\Sigma_{\tilde\O},\nu,G)$, where $\Sigma_{\tilde\O}$ is the Borel sigma-algebra on $\tilde\O$.
\end{defn}

For brevity, from now on, we will skip the indication of sigma-algebras in the notation of measure-preserving actions. For example, a multiorder will be denoted by $(\tilde\O,\nu,G)$
or just $\tilde\O$, when this does not lead to a confusion.

A priori, it is not clear for what kind of countable groups an \im\ on $\tilde\O$ exists. Our Theorems \ref{TM} and \ref{exi} imply that the existence of such a measure is in fact equivalent to amenability (see Corollary \ref{char}).

In case the group $G$ is amenable it is natural to consider multiorders with the following  additional property:

\begin{defn}\label{fp}
	Let $G$ be a countable amenable group with the unit denoted by~$e$. A~multiorder $(\tilde\O,\nu,G)$ has the \emph{F\o lner property} (or, briefly, is \emph{F\o lner}) if, for $\nu$-almost every $\prec\ \in\tilde\O$, the order intervals $[e,b_n]^\prec$ (where $b_n$ denotes the $n$th successor of $e$ in the order $\prec$) form a F\o lner \sq\ in $G$.
\end{defn}

It turns out that the F\o lner property is automatic, as the following theorem states. The proof, which was suggested by Tom Meyerovitch, and which relies on an orbit-equivalence to a $\Z$-action, is provided in the next section. Alexandre Danilenko pointed out that the same result can be alternatively derived from \cite[Lemma 3.10]{RW} using the Borel--Cantelli Lemma.
\begin{thm}\label{TM}{\rm(Suggested by Tom Meyerovitch)}
	Let $G$ be any countable group and assume that there exists a multiorder $(\tilde \O, \nu, G)$ on $G$. Then the multiorder $(\tilde \O,\nu,G)$ has the F\o lner property, in particular $G$ is amenable.
\end{thm}
The following theorem will be proved in Section \ref{trzy} (see Corollary \ref{34}; in~the Appendix the reader will find a strengthened version, Corollary \ref{muo1}).

\begin{thm}\label{exi}
	Let $G$ be a countable amenable group. There exists a multiorder $(\tilde\O,\nu,G)$ of entropy zero. 
\end{thm}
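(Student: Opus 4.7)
\emph{Proof plan.} The plan is to manufacture the required multiorder as the pushforward of a free $G$-action of entropy zero under a canonical factor map into $\tilde\O$; since entropy cannot increase under factor maps, the resulting multiorder will automatically have entropy zero.

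The first step is to start from some free measure-preserving action $(X,\mu,G)$ with $h(\mu,G)=0$; the existence of such actions on every countable amenable group is a classical fact due to Ornstein and Weiss (in fact every non-negative real number, including zero, is realized as the entropy of a free ergodic $G$-action). Given such an action, I would appeal to the Ornstein--Weiss orbit equivalence theorem to produce a measure-preserving automorphism $S\colon X\to X$ (the ``successor map'') whose $\Z$-orbits coincide $\mu$-almost everywhere with the $G$-orbits, which is essentially the content of Theorem~\ref{motooe}.

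With $S$ in hand, I define the factor map $\phi\colon X\to\tilde\O$ by $\phi(x)=\,\prec_x$, where $a\prec_x b$ iff $n_a(x)<n_b(x)$ and $n_g(x)\in\Z$ is the unique integer with $g(x)=S^{n_g(x)}(x)$ (unique by freeness of $S$, existing by orbit equivalence combined with freeness of the $G$-action). Because $g\mapsto n_g(x)$ is a bijection from $G$ to $\Z$, each $\prec_x$ has type $\Z$ and thus $\phi$ lands in $\tilde\O$; measurability follows from the Borel structures involved. The cocycle identity $n_g(hx)=n_{gh}(x)-n_h(x)$ then yields the $G$-equivariance $\phi(hx)=h(\prec_x)$ in the sense of Definition~\ref{mo}.

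Set $\nu:=\phi_*\mu$. Then $\nu$ is a $G$-invariant Borel probability measure supported on $\tilde\O$, so $(\tilde\O,\nu,G)$ is a multiorder and a measure-theoretic factor of $(X,\mu,G)$; monotonicity of entropy under factor maps gives $h(\nu,G)\le h(\mu,G)=0$. In my view, no step above is individually delicate; the real content lies in the Ornstein--Weiss orbit equivalence theorem and in the standard existence of a free zero-entropy $G$-action, both of which I would invoke as black boxes.
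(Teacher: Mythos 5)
Your proposal is correct and follows essentially the same route as the paper: the paper proves Theorem \ref{exi} as Corollary \ref{34} by applying Corollary \ref{33} (every free action factors onto a multiorder, via Theorem \ref{oetomo} and Ornstein--Weiss orbit equivalence) to a free zero-entropy $G$-action. Your map $x\mapsto\,\prec_x$ built from $n_g(x)$ is exactly the paper's cocycle $x\mapsto\mathsf{bi}_x$ read through the bijection-versus-order correspondence of Proposition \ref{odw1}, and the entropy-zero conclusion by monotonicity under factors is the same.
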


\begin{cor}\label{char}
	A countable group $G$ is amenable if and only if there exists a multiorder on $G$.
\end{cor}

\begin{cor} In every countable amenable group $G$ there exists a F\o lner \sq\ $\F=(F_n)_{n\in\N}$ with the following properties
	\begin{itemize}
		\item $F_n\subset F_{n+1}$, $n\in\N$ (i.e.\ $\F$ is nested),
		\item $F_1=\{e\}$ (i.e.\ $\F$ is centered),
		\item $|F_n|=n$ (i.e.\ $\F$ ``progresses by one element'').
	\end{itemize}
\end{cor}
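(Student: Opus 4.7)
The plan is to obtain the Følner sequence directly as a sequence of order intervals coming from a multiorder. Specifically, I would invoke Theorem \ref{exi} to produce a multiorder $(\tilde\O,\nu,G)$ on $G$ (the entropy-zero refinement is not needed for this corollary; mere existence suffices). Then, by Theorem \ref{TM}, this multiorder automatically has the Følner property. Hence there exists at least one (in fact, $\nu$-almost every) order $\prec\ \in\tilde\O$ such that the order intervals $[e,b_n]^\prec$, where $b_n$ is the $n$th successor of $e$ with respect to $\prec$, form a Følner sequence in $G$.

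Fix such an order $\prec$, set $b_0:=e$, and define
\begin{equation*}
F_n \;:=\; [e,b_{n-1}]^\prec \;=\; \{e,b_1,b_2,\dots,b_{n-1}\}\qquad(n\in\N).
\end{equation*}
All three listed properties then follow at once from the definition: $F_1=\{e\}$, so $\F$ is centered; $F_n\subset F_{n+1}$ since adjoining the next successor $b_n$ produces $F_{n+1}$, so $\F$ is nested; and $|F_n|=n$ by construction, so $\F$ progresses by one element. The Følner property of the original sequence $([e,b_n]^\prec)_{n\in\N}$ is inherited by $(F_n)_{n\in\N}$ by a trivial reindexing (shifting by one does not affect the limit $|F_n\cap gF_n|/|F_n|\to 1$).

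The main obstacle in this argument is not at the level of the corollary itself but at the level of the two theorems it rests on: the nontrivial content lies in Theorem \ref{exi} (existence of a multiorder at all on an arbitrary countable amenable group) and Theorem \ref{TM} (the automatic Følner property of any multiorder). Once these are available, the corollary is essentially a cosmetic reindexing, and the only thing one has to check carefully is that the successor $b_n$ of $e$ in an order of type $\Z$ is well defined for each $n\in\N$ — which is built into Definition \ref{oz}, since every order interval is finite and there is no maximal element.
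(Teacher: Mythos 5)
Your proposal is correct and coincides with the paper's own proof: both invoke Theorem \ref{exi} for existence of a multiorder, Theorem \ref{TM} for its automatic F\o lner property, and then take $F_n$ to be the order interval of length $n$ starting at $e$ for a $\nu$-typical order. The reindexing remark and the observation that the $n$th successor of $e$ is well defined for an order of type $\Z$ are fine and add nothing problematic.
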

\begin{proof}
	Let $(\tilde{\O},\nu,G)$ be a multiorder on $G$ whose existence is guaranteed by
	Theorem~\ref{exi}. Then, by Theorem \ref{TM}, a desired F\o lner \sq\ is obtained by fixing a $\nu$-typical order $\prec\ \in\tilde\O$ and letting $F_n$ be the order-interval of length $n$ starting~at~$e$.
\end{proof}

For any countable group $G$, there are at least two other ways of representing the action of $G$ on the family $\tilde\O$ of all orders of type $\Z$ in a symbolic form. The first one refers to the concept of an increment:

\begin{defn}
	Let $\prec\ \in\tilde\O$ be an order of type $\Z$ of a countable group $G$. For each $g$ let $\mathsf{succ}_\prec(g)$ denote the successor of $g$ with respect to $\prec$ and define \emph{the (left) increment} at $g$ as $\incr_\prec(g)=\mathsf{succ}_\prec(g)g^{-1}$. Let $\pi:\tilde\O\to G^G$ be given by $\prec\ \mapsto \incr_\prec$, where
	$\incr_\prec$ stands for the function $\incr_\prec(\cdot)$.
\end{defn}

We have the following:
\begin{prop}\label{odw}
	The mapping $\pi$ is a measurable injection\footnote{Here we mean a Borel measurable injection. Recall that Borel-measurable injection sends Borel sets to Borel sets, i.e.\ the inverse map is Borel-measurable, see \cite[Theorem~15.1 and Corollary~15.2]{Ke}.}, and it intertwines the action of $G$ on $\tilde\O$ defined by \eqref{ac} with the shift action\footnote{For any set $\Lambda$, the shift action of $G$ on $\Lambda^G$ is defined as follows: $(g(x))_{h}=x_{hg}$, where $g\in G$ and $x=(x_h)_{h\in G}\in \Lambda^G$.} on the image $\pi(\tilde\O)\subset G^G$.
\end{prop}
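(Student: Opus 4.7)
The plan is to verify the three claims separately: measurability, injectivity, and equivariance under the respective $G$-actions.

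For measurability, I would first note that for each fixed $g\in G$, the map $\prec\ \mapsto\mathsf{succ}_\prec(g)$ from $\tilde\O$ to $G$ is Borel. Indeed, $\mathsf{succ}_\prec(g)=h$ is equivalent to the countable conjunction of the predicates $g\prec h$ and, for each $k\in G$, $\neg(g\prec k\prec h)$. Since $\tilde\O$ inherits its topology from $\{0,1\}^{G\times G}$, each such predicate defines a clopen subset of $\tilde\O$, so the preimage of every singleton under $\prec\ \mapsto \mathsf{succ}_\prec(g)$ is a $G_\delta$ set. Consequently $\prec\ \mapsto \incr_\prec(g)=\mathsf{succ}_\prec(g)g^{-1}$ is Borel for every $g$, and $\pi$ is Borel as a map into the product space $G^G$.

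For injectivity, observe that from the function $\incr_\prec$ we recover the successor function via $\mathsf{succ}_\prec(g)=\incr_\prec(g)\,g$. Because $\prec$ is of type $\Z$, its predecessor function is also determined (as the inverse of the successor), and iterating these two operations starting from any $g\in G$ exhausts all of $G$. The order itself is then reconstructed by the rule: $a\prec b$ if and only if $b=\mathsf{succ}_\prec^{n}(a)$ for some $n\ge 1$. Hence $\pi$ is one-to-one.

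For equivariance, fix $g\in G$ and set $\prec'\,=g(\prec)$. I need to prove that $\incr_{\prec'}(h)=\incr_\prec(hg)$ for every $h\in G$, which is precisely the shift formula $(g\cdot x)_h=x_{hg}$ evaluated at $x=\incr_\prec$. By \eqref{ac}, $h\prec' k$ iff $hg\prec kg$, so $\mathsf{succ}_{\prec'}(h)=k$ iff $\mathsf{succ}_\prec(hg)=kg$, giving $\mathsf{succ}_{\prec'}(h)=\mathsf{succ}_\prec(hg)\,g^{-1}$. Therefore
\begin{equation*}
\incr_{\prec'}(h)=\mathsf{succ}_{\prec'}(h)\,h^{-1}=\mathsf{succ}_\prec(hg)\,g^{-1}h^{-1}=\mathsf{succ}_\prec(hg)\,(hg)^{-1}=\incr_\prec(hg),
\end{equation*}
as required.

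None of these steps is deep; the only slightly delicate point (and the place where one can easily slip) is the equivariance computation, where the right multiplications inherent in \eqref{ac} must be matched carefully against the right multiplications defining the \emph{left} increment, so that the end result lines up with the shift formula $(g\cdot x)_h=x_{hg}$ rather than its inverse.
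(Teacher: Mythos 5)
Your proposal is correct and follows essentially the same route as the paper: measurability via the observation that $\{\prec\,:\mathsf{succ}_\prec(g)=h\}$ is a countable combination of clopen conditions on the binary order relation, injectivity via recoverability of the successor function from the increments, and the identical equivariance computation $\incr_{g(\prec)}(a)=\incr_\prec(ag)=(g(\incr_\prec))(a)$. The only cosmetic difference is in the injectivity step, where you exhibit an explicit reconstruction of $\prec$ from $\incr_\prec$ while the paper argues contrapositively that distinct orders have distinct successor (hence increment) functions; both are sound.
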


\begin{proof}
	Injectivity is obvious: two orders of type $\Z$, say $\prec$ and $\prec'$, are different if and only if they assign different successors, $\mathsf{succ}_\prec(g)$ and $\mathsf{succ}_{\prec'}(g)$, to at least one $g\in G$. Then the respective increments, $\incr_\prec(g)$ and $\incr_{\prec'}(g)$ are also different.
	
	For measurability of $\pi$ it suffices to show that for any $a,b\in G$ the set
	\begin{equation*}
		\{\prec\ \in\tilde O: \incr_\prec(a)=b\}
	\end{equation*}
	is measurable. This set can be written as $\{\prec\ \in\tilde O: \mathsf{succ}_\prec(a)=ba\}$, which in turn equals 
	\begin{equation*}
		\{\prec\ \in\tilde O: a\prec ba \text{ and }(\nexists c\in G)\, a\prec c\prec ba\}.
	\end{equation*}
	The above set is easily seen to be a countable combination of unions and intersections of sets depending on just one binary order relation, which are obviously closed~in~$\tilde\O$.
	
	It follows from \eqref{ac} that 
	\begin{equation}\label{su}
		b=\mathsf{succ}_{g(\prec)}(a)\iff bg = \mathsf{succ}_\prec(ag).
	\end{equation}
	Then 
	\begin{equation*}
		\incr_{\prec}(ag)=bg(ag)^{-1}=ba^{-1}=\incr_{g(\prec)}(a).
	\end{equation*} 
	The left hand side, $\incr_{\prec}(ag)$, equals $(g(\incr_\prec))(a)$, where $g(\cdot)$ denotes the shift action of $g$~on~$G^G$. We have proved that the following diagram commutes:
	\begin{equation*}
		\begin{CD}
			\prec @>g>\eqref{ac}> g(\prec)\\
			@V\pi VV @VV\pi V\\
			\incr_\prec @>g>\text{(shift)}> g(\incr_\prec)=\incr_{g(\prec)},
		\end{CD}
	\end{equation*}
	which ends the proof.
\end{proof}

Another approach to the action of $G$ on $\tilde\O$ relies on representing orders of type $\Z$ as bijections from $\Z$ to $G$.
\begin{defn}\label{ddd}
	With each order $\prec\ \in\tilde\O$ we associate the bijection $\mathsf{bi}_\prec:\Z\to G$ which is \emph{anchored} (i.e.\ satisfies $\mathsf{bi}_\prec(0)=e$), and on the rest of $\Z$ is determined by the property:
	\begin{equation}\label{bsu}
		\mathsf{bi}_\prec(i) = g \iff \mathsf{bi}_\prec(i+1)= \mathsf{succ}_\prec(g), \ 
		i\in\Z,\ g\in G.
	\end{equation}
	
	On anchored bijections $\mathsf{bi}:\Z\to G$ we define the action of $G$ by the following formula:
	\begin{equation}\label{bij}
		(g(\mathsf{bi}))(i)= \mathsf{bi}(i+k)\cdot g^{-1}, \text{ where $k$ is such that } g=\mathsf{bi}(k).
	\end{equation}
\end{defn}

The space $\mathbf{Bi}(\Z,G)$ of all anchored bijections from $\Z$ to $G$ equipped with the natural topological structure inherited from $G^{\Z}$ is clearly Borel-measurable (in fact this set is of type $G_{\delta}$, hence it is a Polish space). The fact that \eqref{bij} defines an action will follow from Proposition \ref{odw1}.

\begin{prop}\label{odw1} The assignment $\rho:\tilde \O\to \mathbf{Bi}(\Z,G)$, given by 
	$\rho(\prec)=\mathsf{bi}_\prec$, is a measurable bijection with a continuous inverse, which intertwines the action of $G$ on $\tilde\O$ given by \eqref{ac} with the action of $G$ on $\mathbf{Bi}(\Z,G)$ given by~\eqref{bij}. 
\end{prop}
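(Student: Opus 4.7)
The plan is to break the proposition into four tasks: bijectivity of $\rho$, continuity of $\rho^{-1}$, Borel measurability of $\rho$, and the intertwining property. That \eqref{bij} really defines a $G$-action on $\mathbf{Bi}(\Z,G)$ will then follow automatically from the intertwining, since $\rho$ transports it to the $G$-action on $\tilde\O$ from \eqref{ac}, which is a group action by construction.

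For bijectivity I would construct the inverse explicitly: each anchored bijection $\mathsf{bi}\colon\Z\to G$ determines an order $\prec_\mathsf{bi}\ \in\tilde\O$ via $a\prec_\mathsf{bi} b \iff \mathsf{bi}^{-1}(a)<\mathsf{bi}^{-1}(b)$, and it is immediate from \eqref{bsu} that $\mathsf{bi}\mapsto\ \prec_\mathsf{bi}$ is a two-sided inverse of $\rho$. Continuity of $\rho^{-1}$ is then easy: given a subbasic open set $\{\prec\ \in\tilde\O: a\prec b\}$ in the product topology on $\tilde\O$ and any $\mathsf{bi}$ in its $\rho^{-1}$-preimage, let $i=\mathsf{bi}^{-1}(a)$ and $j=\mathsf{bi}^{-1}(b)$ (so $i<j$); the cylinder $\{\mathsf{bi}'\in\mathbf{Bi}(\Z,G): \mathsf{bi}'(i)=a,\ \mathsf{bi}'(j)=b\}$ is an open neighborhood of $\mathsf{bi}$ lying entirely in that preimage. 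Borel measurability of $\rho$ reduces to showing that for every $i\in\Z$ and $g\in G$ the set $\{\prec\ \in\tilde\O:\mathsf{bi}_\prec(i)=g\}$ is Borel, which I would prove by induction on $|i|$ using the recurrence \eqref{bsu} and the already-verified fact (from the proof of Proposition \ref{odw}) that $\{\prec\ \in\tilde\O:\mathsf{succ}_\prec(a)=b\}$ is Borel for every $a,b\in G$.

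Finally, the intertwining reduces to the identity $\mathsf{bi}_{g(\prec)}=g(\mathsf{bi}_\prec)$ of anchored bijections $\Z\to G$. Both are indeed anchored at $e$: for the right-hand side, with $k$ defined by $g=\mathsf{bi}_\prec(k)$, one computes $(g(\mathsf{bi}_\prec))(0)=\mathsf{bi}_\prec(k)g^{-1}=gg^{-1}=e$. It then suffices to check that both satisfy the same forward recurrence $\mathsf{bi}(i+1)=\mathsf{succ}_{g(\prec)}(\mathsf{bi}(i))$, since any two anchored bijections from $\Z$ to $G$ that obey this recurrence must coincide. Using the identity $\mathsf{succ}_{g(\prec)}(a)=\mathsf{succ}_\prec(ag)\,g^{-1}$ (a direct rewriting of \eqref{su}), the recurrence for $g(\mathsf{bi}_\prec)$ becomes a one-line consequence of the recurrence already satisfied by $\mathsf{bi}_\prec$. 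The main obstacle is the careful bookkeeping of the shift-by-$k$ in $\Z$ and the right-multiplication-by-$g^{-1}$ in the definition \eqref{bij}, but once the successor identity is available, the verification is a short calculation.
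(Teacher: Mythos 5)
Your proof is correct, but it diverges from the paper's argument in two of its four components, in ways worth noting. For measurability of $\rho$, the paper does not argue directly: it deduces Borel measurability of $\rho$ from bijectivity together with measurability of the inverse, invoking the Lusin--Souslin theorem (a Borel-measurable injection has Borel-measurable inverse; see the footnote to Proposition \ref{odw} citing \cite{Ke}). Your induction on $|i|$, showing that each set $\{\prec\ \in\tilde\O:\mathsf{bi}_\prec(i)=g\}$ is Borel via the recurrence \eqref{bsu} and the Borel sets $\{\prec\,:\mathsf{succ}_\prec(a)=b\}$ already exhibited in Proposition \ref{odw}, is more elementary and self-contained, at the cost of a little bookkeeping; the paper's route is shorter but leans on nontrivial descriptive set theory. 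For the intertwining, the paper fixes $i>0$, sets $h=\mathsf{bi}_{g(\prec)}(i)$, and translates the statement ``$[e,h]^{g(\prec)}$ is an order interval of length $i+1$'' through \eqref{ac} into ``$[g,hg]^\prec$ is an order interval of length $i+1$,'' concluding $hg=\mathsf{bi}_\prec(i+k)$; your argument instead verifies that both $\mathsf{bi}_{g(\prec)}$ and $g(\mathsf{bi}_\prec)$ are anchored and satisfy the same forward recurrence governed by $\mathsf{succ}_{g(\prec)}$, using the conjugation identity $\mathsf{succ}_{g(\prec)}(a)=\mathsf{succ}_\prec(ag)\,g^{-1}$ extracted from \eqref{su}, and then appeals to uniqueness of the anchored solution (which holds because the successor map is invertible, so the recurrence propagates both forward and backward). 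The two verifications are of comparable length; yours concentrates all the index-shifting into one identity, the paper's makes the geometric content (transport of order intervals under \eqref{ac}) explicit. Your treatments of bijectivity and of continuity of $\rho^{-1}$ match the paper's in substance, and your observation that \eqref{bij} being a genuine action follows from the intertwining is exactly how the paper arranges things as well.
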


\begin{proof} First of all, note that for any $\mathsf{bi}\in\mathbf{Bi}(\Z,G)$ and any $g\in G$, by \eqref{bij}, we have $(g(\mathsf{bi}))(0)=\mathsf{bi}(k)\cdot g^{-1}=gg^{-1}=e$ (where $k$ is such that $g=\mathsf{bi}(k)$), so $g(\mathsf{bi})$ is anchored. Injectivity of the assignment $\prec\ \mapsto\mathsf{bi}_\prec$ is obvious, and so is its surjectivity: every anchored bijection $\mathsf{bi}:\Z\to G$ naturally defines on $G$ an order $\prec$ of type $\Z$ such that $\mathsf{bi}=\mathsf{bi}_\prec$. The inverse assignment $\mathsf{bi}\mapsto\ \prec$ is continuous because any finite set $K\subset G$ is contained in the image $\mathsf{bi}([-n,n])$ for large enough $n$ and so $\mathsf{bi}([-n,n])$ determines the order between the elements of $K$ (this argument fails for the map $\rho$; for instance there is no finite set $K$ such that $\mathsf{bi}_\prec(1)$ can be determined based on the order $\prec$ just between the elements of $K$). Measurability of $\rho$ follows from bijectivity and measurability of the inverse (see the footnote on the preceding~page).
	
	To complete the proof we need to show that the following diagram commutes:
	\begin{equation*}
		\begin{CD}
			\prec @>g>\eqref{ac}> g(\prec)\\
			@VVV @VVV\\
			\mathsf{bi}_\prec @>g>\eqref{bij}> g(\mathsf{bi}_\prec)=\mathsf{bi}_{g(\prec)},
		\end{CD}
	\end{equation*}
	i.e.\ that $(g(\mathsf{bi}_\prec))(i)=\mathsf{bi}_{g(\prec)}(i)$, for all $i\in\Z$. For $i=0$ this follows from both $\mathsf{bi}_{g(\prec)}$ and $g(\mathsf{bi}_\prec)$ being anchored.
	Choose $i\!>\!0$ and let $h=\mathsf{bi}_{g(\prec)}(i)$ (we omit the similar case $i\!<\!0$). Then $[e,h]^{g(\prec)}$ is an order interval (with respect to $g(\prec)$) of length $i+1$. According to \eqref{ac}, $[g,hg]^\prec$ is an order interval of length $i+1$ with respect to~$\prec$\,. This implies that if $k$ is such that $\mathsf{bi}_{\prec}(k)=g$, then $hg=\mathsf{bi}_{\prec}(i+k)$, i.e.\ $h=\mathsf{bi}_{\prec}(i+k)\cdot g^{-1}$. By \eqref{bij}, the latter expression equals
	$(g(\mathsf{bi}_{\prec}))(i)$, and we are done. \end{proof}

We will use the following notation: given an order $\prec\ \in\tilde\O$ we abbreviate $\mathsf{bi}_\prec(i)$ as $i^\prec$ ($i\in\Z$). In particular $0^{\prec}=e$ regardless of $\prec$. Given $i<j\in\Z$, the order interval $[i^\prec,j^\prec]^\prec\,=\{i^\prec,(i+1)^\prec,(i+2)^\prec,\dots,j^\prec\}$ will be denoted by $[i,j]^\prec$. Analogous notation $[i,\infty)^\prec$ and $(-\infty, j]^\prec$ will be used for unbounded intervals. Given $g\in G$ and $n\in\N$, by $[g,g+n]^\prec$ (resp. $[g-n,g]^\prec$) we will denote the order interval of length $n+1$ starting (resp. ending) at $g$ (for instance, $[e,e+n]^\prec\,=[0,n]^\prec$). Also, for $F\subset G$, we let \begin{equation}
	[F,F+n]^\prec\,=\bigcup_{g\in F}\,[g,g+n]^\prec\ \text{ and }\ [F-n,F]^\prec\,=\bigcup_{g\in F}\,[g-n,g]^\prec.
\end{equation} 

In this notation, the formula \eqref{bij} defining the action of $G$ on anchored bijections takes on the following form:
\begin{equation}\label{16}
	i^{g(\prec)} = (i+k)^{\prec}\cdot g^{-1}, \text{ \ equivalently \ }i^{\prec}\cdot g^{-1}= (i-k)^{g(\prec)},
\end{equation}
where $k$ is the unique integer such that 
\begin{equation}\label{17}
	g=k^\prec, \text{ \ equivalently \ } g^{-1}=(-k)^{g(\prec)}.
\end{equation}

\begin{rem}
	In general (unlike in the case of $G=\Z$), $(k^\prec)^{-1}$ does not equal $(-k)^\prec$. 
\end{rem}

Summarizing this section, we have introduced three isomorphic $G$-actions: the action on the set $\tilde\O$ of all orders of type~$\Z$, given by \eqref{ac}, the action of the usual\break$G$-shift on $G^G$ restricted to $\pi(\tilde\O)$, and the action given by \eqref{bij} on anchored bijections. Thus, if $G$ is amenable, a multiorder can be understood in three equivalent ways: as a measure-theoretic $G$-action with a Borel invariant probability measure~$\nu$ on $\tilde\O$, the action of the $G$-shift on $G^G$ with the measure $\pi(\nu)$, and as the action given by \eqref{bij} on $\mathbf{Bi}(\Z,G)$ equipped with the measure $\rho(\nu)$. For simplicity, in all three cases we will denote the measure as $\nu$. In the sequel, we will mainly use the first and last representations.

\section{Multiorder and orbit equivalence to an action of the integers}\label{trzy}
Recall that two measure-theoretic group actions, say $(X,\mu,\Gamma)$ and $(Y,\nu,G)$, where both groups $\Gamma$ and $G$ are countable, are \emph{orbit equivalent} if there exists a~measure-automorphism $\psi:(X,\mu)\to (Y,\nu)$ which sends $\Gamma$-orbits to $G$-orbits, i.e.\ for $\mu$-almost every $x\in X$ and every $\gamma\in\Gamma$ there exists $g_{x,\gamma}\in G$ such that 
\begin{equation*}
	\psi(\gamma(x))=g_{x,\gamma}(\psi(x)),
\end{equation*}
and $\{g_{x,\gamma}: \gamma\in\Gamma\}=G$.
We can also write
\begin{equation*}
	\gamma(x)=\psi^{-1}g_{x,\gamma}\psi(x),
\end{equation*}
which means that the identity map establishes an orbit equivalence between the given $\Gamma$-action on $(X,\mu)$ and the $G$-action on $(X,\mu)$ given by 
\begin{equation*}
	g(x) = \psi^{-1}g\psi(x),
\end{equation*}
which is obviously isomorphic to the original action of $G$ on $(Y,\nu)$.
This reduces the considerations of orbit equivalent actions to actions defined on the same probability space $(X,\mu)$ and such that the orbit equivalence is established by the identity map (i.e.\ both actions have the same orbits). In such case, for $\mu$-almost every $x\in X$ we have a relation $\mathsf R_x$ between the elements of $\Gamma$ and $G$:
\begin{equation*}
	\gamma\ \mathsf R_x\ g\iff \gamma(x)=g(x),
\end{equation*}
such that $\mathsf R_x$ has full projections on $\Gamma$ and on $G$.
In case when both actions are free, the above relation is a bijection, and we can write
\begin{equation}\label{bibi}
	g=\mathsf{bi}_x(\gamma) \iff \gamma(x)=g(x).
\end{equation}
In this case, $x\mapsto\mathsf{bi}_x$ is a measurable assignment from $X$ to the collection $\mathbf{Bi}(\Gamma,G)$ of all anchored (i.e.\ sending the unit of $\Gamma$ to the unit of $G$) bijections from $\Gamma$ to $G$, called a \emph{cocycle}.
\smallskip

Another way of viewing orbit equivalence is as follows: 
Recall that the \emph{full group} of an action of a countable group $\Gamma$ on a measure space $(X,\mu)$ consists of all measurable invertible transformations $T:X\to X$ such that for $\mu$-almost every $x\in X$ one has $T(x)=\gamma_x(x)$ for some $\gamma_x\in\Gamma$. It is well known that if the action of $\Gamma$ preserves the measure $\mu$ then every $T$ in the full group also preserves $\mu$. We have the following, almost obvious, fact (whose proof we~skip):
\begin{fact}
	Two actions $(X,\mu,\Gamma)$ and $(X,\mu,G)$ are orbit equivalent via the identity map if and only if for every $g\in G$ the transformation $x\mapsto g(x)$ belongs to the full group of the $\Gamma$-action and for every $\gamma\in\Gamma$ the transformation $x\mapsto \gamma(x)$ belongs to the full group of the $G$-action.
\end{fact}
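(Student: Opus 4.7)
My plan is to unpack both directions from the definitions, keeping in mind that countability of $\Gamma$ and $G$ lets us handle all the almost-sure conditions simultaneously.

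For the forward direction, I would start from the assumption that the identity map realizes an orbit equivalence. By definition this means that for $\mu$-almost every $x\in X$ the orbits coincide, $\Gamma x=Gx$. Fix any $g\in G$. Then for a.e.\ $x$, the point $g(x)$ lies in the $\Gamma$-orbit of $x$, so there is some $\gamma\in\Gamma$ with $g(x)=\gamma(x)$. To see that $x\mapsto g(x)$ belongs to the full group of the $\Gamma$-action, I would decompose $X$ measurably as $X=\bigsqcup_{\gamma\in\Gamma}X_\gamma$, where
\[
X_\gamma=\bigl\{x\in X:g(x)=\gamma(x)\bigr\}\setminus\bigcup_{\gamma'\ne\gamma}\bigl\{x\in X:g(x)=\gamma'(x)\bigr\}
\]
(each set on the right is measurable because both actions are measurable, and the countable union is measurable). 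On $X_\gamma$ the transformation $x\mapsto g(x)$ equals $x\mapsto \gamma(x)$, so $x\mapsto g(x)$ is in the full group of $\Gamma$. The symmetric argument gives the analogous statement for every $\gamma\in\Gamma$ in the full group of $G$.

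For the converse, assume every $g\in G$ acts (a.e.) as an element of the full group of the $\Gamma$-action and vice versa. For each fixed $g\in G$ there is a conull set $X_g\subset X$ on which $g(x)\in\Gamma x$; since $G$ is countable, $X^*=\bigcap_{g\in G}X_g$ is conull and $Gx\subseteq\Gamma x$ for every $x\in X^*$. Applying the symmetric construction for $\Gamma$ and intersecting again, we obtain a single conull set on which $\Gamma x=Gx$. This is precisely orbit equivalence via the identity map.

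The only nonroutine point is the measurability of the decomposition $\{X_\gamma\}_{\gamma\in\Gamma}$ used in the first paragraph, and even this is standard: membership in $\{x:g(x)=\gamma(x)\}$ is determined by two measurable maps $X\to X$, so the set is measurable, and countable unions and complements preserve measurability. Everything else is a direct translation of the definition of orbit equivalence (orbits coincide a.e.) and the definition of the full group (membership means pointwise agreement with \emph{some} group element), made uniform by the countability of $\Gamma$ and $G$.
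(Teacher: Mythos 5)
Your proof is correct: it is exactly the routine unwinding of the definitions (orbits coincide a.e.\ $\Leftrightarrow$ each group element acts as a full-group element of the other action, with countability used to pass between the ``for each $g$, a.e.\ $x$'' and ``a.e.\ $x$, for all $g$'' quantifier orders). The paper explicitly skips the proof of this Fact as almost obvious, and your argument is the intended one; the measurable partition $\{X_\gamma\}$ is a harmless extra, since the paper's definition of the full group only asks for pointwise existence of $\gamma_x$.
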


Orbit equivalence is connected to our main topic -- the multiorder -- via the following two theorems. Notice that for a $\Z$-action to be free it suffices that almost all orbits are infinite. Hence any $\Z$-action that is orbit equivalent to a free $G$-action is also free and the assignment $x\mapsto \mathsf{bi}_x$ given by \eqref{bibi} is well defined.
\begin{thm}\label{oetomo}
	Let $(X,\mu, G)$ be a free, measure-preserving\/ $G$-action on a probability space. Let $T:X\to X$ be a measure-automorphism which generates a $\Z$-action orbit equivalent (via the identity map) to $(X,\mu,G)$. Then the map $x\mapsto\mathsf{bi}_x$ given by \eqref{bibi} is a measure-theoretic factor map from $(X,\mu,G)$ to a multiorder $(\tilde\O,\nu,G)$, where $\nu$ is the image of $\mu$ by the above map, and the action of $G$ on $\tilde\O$ is given by the formula \eqref{bij}.
\end{thm}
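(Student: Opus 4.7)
The plan is to verify, in turn, that $x\mapsto\mathsf{bi}_x$ (i) is well-defined into $\mathbf{Bi}(\Z,G)$, (ii) is measurable, and (iii) intertwines the $G$-action on $X$ with the action \eqref{bij}, after which the remaining factor-map statement is automatic. Via Proposition \ref{odw1} (where $\rho^{-1}$ is continuous) the map into $\mathbf{Bi}(\Z,G)$ gives rise to a measurable map into $\tilde\O$, and the two $G$-actions correspond.

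First I would check (i) and (ii) quickly. Since the $G$-action is free and the $\Z$-action, being orbit equivalent to a free action, has almost all orbits infinite (hence is itself free), for $\mu$-a.e.\ $x$ the $T$-orbit and the $G$-orbit of $x$ coincide and both are parameterized bijectively; thus $\mathsf{bi}_x(i)=g\iff T^i(x)=g(x)$ defines a bijection $\mathsf{bi}_x\colon\Z\to G$, anchored because $T^0(x)=x=e(x)$. Measurability is immediate: for every $i\in\Z$ and $g\in G$,
\[
\{x:\mathsf{bi}_x(i)=g\}=\{x:T^i(x)=g(x)\}
\]
is measurable since both $T^i$ and $g$ act as measurable automorphisms of $(X,\mu)$.

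The heart of the argument is (iii): the identity $\mathsf{bi}_{g(x)}=g(\mathsf{bi}_x)$ with the right-hand action given by \eqref{bij}. Fix $x$ and $g\in G$, and let $k\in\Z$ be the unique integer with $T^k(x)=g(x)$, i.e.\ $g=\mathsf{bi}_x(k)$. For any $i\in\Z$, write $h=\mathsf{bi}_{g(x)}(i)$, so by definition $T^i(g(x))=h(g(x))$. On the one hand,
\[
T^i(g(x))=T^i(T^k(x))=T^{i+k}(x)=\mathsf{bi}_x(i+k)\,(x),
\]
and on the other $h(g(x))=(hg)(x)$. Freeness of the $G$-action yields $hg=\mathsf{bi}_x(i+k)$, i.e.\ $h=\mathsf{bi}_x(i+k)\cdot g^{-1}$, which is exactly formula \eqref{bij}. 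Thus the diagram
\[
\begin{CD}
x @>g>> g(x)\\
@VVV @VVV\\
\mathsf{bi}_x @>g,\,\eqref{bij}>> \mathsf{bi}_{g(x)}
\end{CD}
\]
commutes.

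With (i)--(iii) in hand, the pushforward $\nu$ of $\mu$ (transported to $\tilde\O$ via $\rho^{-1}$) is $G$-invariant by (iii), so $(\tilde\O,\nu,G)$ is a measure-preserving $G$-action of the form required by the definition of a multiorder, and $x\mapsto\mathsf{bi}_x$ (composed with $\rho^{-1}$) is the desired factor map. The only substantive obstacle is step (iii): the bookkeeping needed to see that the cocycle arising from orbit equivalence is precisely the shift-like action \eqref{bij}; freeness of the $G$-action is essential there, as it is what allows one to cancel $(x)$ from both sides to recover the group equation $hg=\mathsf{bi}_x(i+k)$.
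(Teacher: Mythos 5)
Your proposal is correct and follows essentially the same route as the paper: the core of both arguments is the equivariance computation, where one combines $T^i(g(x))=T^{i+k}(x)$ with $h(g(x))=(hg)(x)$ and cancels $x$ by freeness of the $G$-action to get $h=\mathsf{bi}_x(i+k)\cdot g^{-1}$, matching \eqref{bij}. Your additional explicit checks of well-definedness, measurability, and the transport to $\tilde\O$ via $\rho^{-1}$ are points the paper treats as routine.
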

Before the proof we draw some corollaries, important for the rest of this paper. 
\begin{cor}\label{33}
	Any free measure-preserving action $(X,\mu,G)$ of a countable amen\-able group $G$ on a probabi\-lity space has a multiorder as a measure-theoretic factor.
\end{cor}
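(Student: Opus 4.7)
The plan is to reduce the corollary directly to Theorem \ref{oetomo} by invoking the Ornstein--Weiss orbit equivalence theorem cited as \cite{OW}. That theorem guarantees that any free measure-preserving action of a countable amenable group on a standard probability space is orbit equivalent to some free, ergodic $\Z$-action (or, in the non-ergodic case, to a $\Z$-action obtained by gluing ergodic components on a common orbit structure). This is precisely the external input that is missing from the hypothesis of Theorem \ref{oetomo} but is available to us here because $G$ is assumed amenable.

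First I would apply \cite{OW} to obtain a measure automorphism $T_0$ on some probability space $(Y,\nu_0)$ generating a $\Z$-action orbit equivalent to $(X,\mu,G)$ via some measure isomorphism $\psi\colon (X,\mu)\to(Y,\nu_0)$. As explained at the beginning of Section \ref{trzy}, the formula $T(x)=\psi^{-1}T_0\psi(x)$ transports this $\Z$-action back to $(X,\mu)$ so that the orbit equivalence with the $G$-action is realized by the identity map. Since the $G$-action is free, the $\Z$-action $(X,\mu,T)$ has almost every orbit infinite (it coincides with a $G$-orbit and the $G$-action is free), hence $T$ is free as well, and the cocycle $x\mapsto \mathsf{bi}_x$ given by \eqref{bibi} is well defined on a set of full measure.

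Having secured such a $T$, the conclusion is immediate from Theorem \ref{oetomo}: the map $x\mapsto \mathsf{bi}_x$ is a measure-theoretic factor map from $(X,\mu,G)$ onto the multiorder $(\tilde\O,\nu,G)$, where $\nu$ is the pushforward of $\mu$ under this map and the $G$-action on $\tilde\O$ is the one defined by \eqref{bij}. This provides the required multiorder factor.

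The only real obstacle is the appeal to \cite{OW}, which is a nontrivial external theorem; everything else is bookkeeping. A minor technical point to verify along the way is that the orbit-equivalent $\Z$-action can be chosen so that the conjugating isomorphism $\psi$ is taken into account correctly, but this is exactly the content of the ``reduction to identity-orbit-equivalence'' paragraph of Section \ref{trzy} and requires no new ideas.
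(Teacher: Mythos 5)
Your proof is correct and follows essentially the same route as the paper: invoke the Ornstein--Weiss theorem to obtain a $\Z$-action orbit equivalent to $(X,\mu,G)$, observe that freeness of the $G$-action forces infinite orbits and hence freeness of the $\Z$-action, and then apply Theorem \ref{oetomo}. The only superfluous detail is the mention of ergodicity of the $\Z$-action, which is not needed anywhere in the argument.
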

\begin{proof}
	It is well known that any measure-preserving (in fact, any nonsingular) $G$-action on a probability space is orbit-equivalent to a $\Z$-action (see \cite[Theorem~6]{OW}). If the $G$-action is free, so is the $\Z$-action\footnote{In general a $\Gamma$-action orbit-equivalent to a free $G$-action need not be free, so the comment preceding Theorem \ref{oetomo} is essential.}, and Theorem \ref{oetomo} applies.
\end{proof}

\begin{cor}\label{34}{\rm(Theorem \ref{exi})}
	On each countable amenable group there exists a multiorder $(\tilde{\O},\nu,G)$ of entropy zero.
\end{cor}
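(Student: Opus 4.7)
The plan is to combine Corollary~\ref{33} with the existence of a free, measure-preserving $G$-action of entropy zero. Since measure-theoretic entropy of an action of a countable amenable group cannot grow when passing to a measure-theoretic factor, any such starting action automatically yields a multiorder of entropy zero via the factor map $x\mapsto\mathsf{bi}_x$ of Theorem~\ref{oetomo}.

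The only real input is therefore the existence of a free, measure-preserving $G$-action of entropy zero on some standard probability space. For this I would invoke a classical construction: any countable amenable group admits a free subshift $X\subset\{0,1\}^G$ of zero topological entropy (for residually finite $G$ one may take the orbit closure of a generic Toeplitz configuration built from a nested sequence of finite-index subgroups; in the general case one uses a F\o lner tower or Bratteli-diagram construction). Amenability of $G$ guarantees that $X$ carries at least one $G$-invariant Borel probability measure $\mu$, and by the variational principle $h(\mu,G)=0$. Passing to an ergodic component preserves both freeness and zero entropy.

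Granted such a system $(X,\mu,G)$, the rest is mechanical. By the Ornstein--Weiss theorem \cite{OW}, there exists a measure-automorphism $T$ of $(X,\mu)$ lying in the full group of the $G$-action whose $\Z$-action has the same orbits as the $G$-action. Freeness of the $G$-action forces the $\Z$-action to be free as well, so the anchored bijection $\mathsf{bi}_x$ of \eqref{bibi} is defined for $\mu$-almost every $x\in X$. Theorem~\ref{oetomo} then identifies $x\mapsto\mathsf{bi}_x$ as a measure-theoretic factor map onto a multiorder $(\tilde\O,\nu,G)$, where $\nu$ is the pushforward of $\mu$. Monotonicity of entropy under factor maps of countable amenable group actions yields $h(\nu,G)\le h(\mu,G)=0$, completing the argument.

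The main obstacle is the first step: producing a free, zero-entropy measure-preserving $G$-action for an \emph{arbitrary} countable amenable $G$. Although classical, this is not entirely trivial and requires some care when $G$ is not residually finite. The paper's Appendix in fact sidesteps this issue altogether by constructing a uniformly F\o lner multiorder of entropy zero directly from an ordered tiling system of $G$, thereby producing the sharper Corollary~\ref{muo1} without going through the existence argument above.
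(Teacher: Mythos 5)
Your argument is essentially the paper's own proof: apply Corollary~\ref{33} to a free zero-entropy $G$-action and use monotonicity of entropy under factor maps. The only difference is in sourcing the free zero-entropy action --- the paper simply cites \cite[Theorem~6.1]{DHZ} for its existence, whereas you sketch ad hoc constructions; citing that result (or the tiling-system machinery of the Appendix) would be the cleaner way to close that step.
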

\begin{proof}
	Apply Corollary \ref{33} to a free zero entropy action of $G$ (for the existence of such an action see e.g. \cite[Thoerem 6.1]{DHZ}). 
\end{proof}

\begin{proof}[Proof of Theorem \ref{oetomo}]By \eqref{bibi}, and since both actions are free, the bijections $\mathsf{bi}_x$ are anchored for $\mu$-almost all $x\in X$. The only thing requiring a proof is the equivariance of the map $x\mapsto\mathsf{bi}_x$. We need to show that, for every $g\in G$ and $i\in\Z$ and for $\mu$-almost every $x\in X$, the following equality holds:
	\begin{equation*}
		\mathsf{bi}_{g(x)}(i)=(g(\mathsf{bi}_x))(i),
	\end{equation*}
	where by the formula \eqref{bij},
	\begin{equation*}
		(g(\mathsf{bi}_x))(i)=\mathsf{bi}_x(i+k)\cdot g^{-1}
	\end{equation*}
	with $k$ such that $g=\mathsf{bi}_x(k)$.
	By \eqref{bibi} and since the actions are free, the elements $h=\mathsf{bi}_{g(x)}(i)$ and $h'= \mathsf{bi}_x(i+k)$ are ($\mu$-almost surely) the unique members of $G$ for which the respective equalities hold: 
	\begin{enumerate}[(1)]
		\item $T^i(g(x))=hg(x)$, 
		\item $T^{i+k}(x)=h'(x)$, 
	\end{enumerate}
	while the fact that $g=\mathsf{bi}_x(k)$ means that 
	\begin{enumerate}
		\item[(3)]$g(x)=T^k(x)$. 
	\end{enumerate}
	Combining (1) and (3) we get $T^{i+k}(x)=hg(x)$, which, combined with (2) yields $h'(x)=hg(x)$. Because the action of $G$ is free, for $\mu$-almost every $x$ the last equality allows to  conclude that $h'=hg$, i.e.\ $h'g^{-1}=h$, which is exactly what we needed to show.
\end{proof}

\begin{thm}\label{motooe}
	Suppose $\varphi:X\to\tilde\O$ is a measure-theoretic factor map from a~measure-preserving $G$-action $(X,\mu,G)$ to a multiorder $(\tilde\O,\nu,G)$. Then $(X,\mu,G)$ is orbit-equivalent (via the identity map) to the $\Z$-action generated by the \emph{successor map} defined as follows:
	\begin{equation}\label{full}
		S(x) = 1^\prec(x), \text{ \ where \ }\prec\ = \varphi(x),
	\end{equation}
	i.e.\ $S(x)=g(x)$, where $g=1^{\varphi(x)}=1^{\prec}$.
	Moreover, for any $k\in\Z$, we have
	\begin{equation}\label{TD}
		S^k(x)=k^\prec(x).
	\end{equation}
	Let $\tilde{S}$ denote the transformation on $\tilde{\O}$ defined by
	\begin{equation}\label{SforO}
		\tilde{S}(\prec)=1^{\prec}(\prec),
	\end{equation}
	i.e.\ $\tilde{S}(\prec)=g(\prec)$, where $g=1^{\prec}$ and $g(\prec)$ is given by the formula \eqref{ac}.
	Then $\tilde{S}$ preserves the measure $\nu$ and $\varphi$ is a factor map from the $\Z$-action $(X,\mu,S)$ to the $\Z$-action $(\tilde{O},\nu,\tilde{S})$.
\end{thm}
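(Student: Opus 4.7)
The plan is to verify the statement in three stages: first, that $S$ is a well-defined measure-automorphism of $(X,\mu)$; second, that its iterates satisfy the closed form \eqref{TD}, from which orbit equivalence via the identity is immediate; and third, that $\varphi$ intertwines $S$ with $\tilde S$, which forces $\tilde S$ to preserve $\nu$ and makes $\varphi$ a factor map of $\Z$-actions.

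For the first stage, measurability of $S$ is clear, since $\varphi$ is measurable and the map $\prec\mapsto 1^{\prec}$ has measurable level sets $\tilde\O_g=\{\prec:1^{\prec}=g\}$. To exhibit an inverse I would set $R(x):=(-1)^{\varphi(x)}(x)$ and verify $R\circ S=S\circ R=\mathrm{id}$ directly from \eqref{16}. For instance, for $S\circ R$: letting $\prec=\varphi(x)$ and $h=(-1)^{\prec}$, equivariance of $\varphi$ gives $\varphi(h(x))=h(\prec)$, so $S(R(x))=1^{h(\prec)}\,h(x)$; applying \eqref{16} with $i=0$ and $k=-1$ yields $1^{h(\prec)}=h^{-1}$, hence $S(R(x))=x$. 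Measure preservation then follows by partitioning $X=\bigsqcup_{g\in G}X_g$ with $X_g=\varphi^{-1}(\tilde\O_g)$: on $X_g$ the map $S$ coincides with the action of $g$, so
\[
\mu(S^{-1}A)=\sum_{g\in G}\mu(X_g\cap g^{-1}A)=\sum_{g\in G}\mu(g(X_g)\cap A)=\mu(A).
\]

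For the second stage I would prove \eqref{TD} by induction on $|k|$. The base case $k=0$ is trivial since anchored bijections satisfy $0^{\prec}=e$. For the step $k\to k+1$ (the step $k\to k-1$ is symmetric), set $g=k^{\prec}$ where $\prec=\varphi(x)$; using equivariance of $\varphi$ to conclude $\varphi(S^k(x))=g(\prec)$, I obtain $S^{k+1}(x)=1^{g(\prec)}(g(x))$, and formula \eqref{16} with $i=k+1$ gives $1^{g(\prec)}=(k+1)^{\prec}g^{-1}$, so $S^{k+1}(x)=(k+1)^{\prec}(x)$, as required. Because each $\prec\in\tilde\O$ is of type $\Z$, the map $k\mapsto k^{\prec}$ is a bijection $\Z\to G$, hence $\{S^k(x):k\in\Z\}=G\cdot x$; this is exactly orbit equivalence via the identity map.

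The third stage is the shortest. Equivariance of $\varphi$ gives
\[
\varphi(S(x))=\varphi\bigl(1^{\varphi(x)}(x)\bigr)=1^{\varphi(x)}\bigl(\varphi(x)\bigr)=\tilde S(\varphi(x)),
\]
so $\varphi\circ S=\tilde S\circ\varphi$; pushing forward yields $\tilde S_*\nu=\varphi_*(S_*\mu)=\varphi_*\mu=\nu$, and bijectivity of $\tilde S$ follows by the same formula-based argument as for $S$ (with $(\tilde\O,\nu,G)$ playing the role of $(X,\mu,G)$ and the identity map playing the role of $\varphi$). The only real bookkeeping hazard is keeping the $G$-action on $X$ and the order-action on $\tilde\O$ straight in the inductive step of stage two and applying \eqref{16} with the correct indices; once that is handled, the remaining content is a controlled unwinding of definitions.
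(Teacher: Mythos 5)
Your proposal is correct and follows essentially the same route as the paper: the closed form \eqref{TD} by induction using \eqref{16}--\eqref{17}, invertibility via the candidate inverse $x\mapsto(-1)^{\varphi(x)}(x)$, and the intertwining $\varphi\circ S=\tilde S\circ\varphi$ from equivariance. The only (harmless) divergences are that you verify measure preservation of $S$ explicitly by the partition $X=\bigsqcup_g X_g$ where the paper invokes its earlier Fact that full-group elements preserve the measure, and you get $\tilde S_*\nu=\nu$ by pushing forward rather than by applying the first part of the theorem to $(\tilde\O,\nu,G)$ as a factor of itself.
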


\begin{rem}
	Note that we do not assume the $G$-actions on $X$ or on $\tilde\O$ to be free.
\end{rem}
\begin{rem}
	In view of the Dye Thoerem (see \cite{Dy}), the orbit-equivalence part of Theorem \ref{motooe} is seemingly trivial for ergodic actions of countable amenable groups. What is special about the action generated by the transformation~$S$ is that it preserves the multiorder factor and, as will be shown in Section~\ref{siedem}, it also preserves the corresponding conditional entropy.
\end{rem}
\begin{rem}\label{pooetomo}
	Let $(X,\mu,G)$ be a free, measure-preserving $G$-action with the same orbits as a $\Z$-action $(X,\mu,T)$. By Theorem \ref{oetomo}, there exists a factor map ${\varphi: X\to \tilde{\O}}$, where $(\tilde{\O},\nu,G)$ is some multiorder. Now, Theorem \ref{motooe} asserts that $(X,\mu,G)$ has the same orbits as the $\Z$-action $(X,\mu,S)$, where $S$ is the successor map given by the formula \eqref{full}. It is not hard to verify (by combining the formulae \eqref{bibi} and~\eqref{full}) that in this case, the maps $S$ and $T$ coincide.
\end{rem}

\begin{cor}\label{35}
	On each countable amenable group $G$ there exists a multiorder $(\tilde{\O},\nu,G)$ of ``double entropy zero'', meaning that
	$h(\nu,G)=h(\nu,\tilde S)=0$, where $h(\nu,G)$ denotes the Kolmogorov--Sinai entropy of $\nu$ under the action of $G$, while $h(\nu,\tilde S)$ denotes the Kolmogorov--Sinai entropy of $\nu$ under the action of $\Z$ by the iterates of $\tilde S$.
\end{cor}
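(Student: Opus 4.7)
The plan is to produce a single probability space $(X,\mu)$ carrying simultaneously a free zero-entropy $G$-action and a free zero-entropy $\Z$-action $T$ that generate the same orbit equivalence relation. Once this is in place, applying Theorem~\ref{oetomo} to $(X,\mu,G)$ with the given $T$ delivers a factor map $\varphi:X\to\tilde\O$ onto a multiorder $(\tilde\O,\nu,G)$. Since factors do not increase entropy, $h(\nu,G)\le h(\mu,G)=0$, settling the first half of the claim. For the second half, Theorem~\ref{motooe} furnishes a successor transformation $S:X\to X$ such that $\varphi$ is also a factor map from $(X,\mu,S)$ onto $(\tilde\O,\nu,\tilde S)$; by Remark~\ref{pooetomo} this $S$ coincides with our preassigned $T$, so $h(\nu,\tilde S)\le h(\mu,S)=h(\mu,T)=0$.

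To build such a pair of actions, I would fix any free ergodic zero-entropy $G$-action $(X_0,\mu_0,G)$, which exists by \cite[Theorem~6.1]{DHZ}, and any free ergodic zero-entropy $\Z$-action $(Y,\rho,T)$ (for instance an irrational rotation of the circle). Combining \cite[Theorem~6]{OW} with Dye's theorem (or, equivalently, invoking the Connes--Feldman--Weiss fact that all free ergodic measure-preserving actions of countable amenable groups on non-atomic standard probability spaces generate, up to isomorphism, the unique hyperfinite equivalence relation), one obtains a measure isomorphism $\phi:X_0\to Y$ sending $G$-orbits to $T$-orbits. Transporting the $G$-action to $(Y,\rho)$ along $\phi$ produces a free zero-entropy $G$-action on $(Y,\rho)$ with the same orbits as $T$, and the scheme of the first paragraph then applies with $(X,\mu)=(Y,\rho)$.

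The main obstacle is that the $\Z$-action produced by \cite[Theorem~6]{OW} from an arbitrary zero-entropy $G$-action can perfectly well have positive -- even infinite -- entropy, since orbit equivalence does not in general preserve entropy. Merely rerunning the proof of Corollary~\ref{34} would therefore only yield $h(\nu,G)=0$, not the stronger ``double'' conclusion. The essential extra input is the flexibility of amenable-group orbit equivalence, which permits us to match up prescribed zero-entropy actions of $G$ and of $\Z$ on one and the same probability space; without this flexibility the argument breaks.
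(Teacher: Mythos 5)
Your proposal is correct and follows essentially the same route as the paper: pick a free ergodic zero-entropy $G$-action, use Ornstein--Weiss plus Dye to arrange a zero-entropy $\Z$-action $T$ on the same space with the same orbits, apply Theorem~\ref{oetomo} to obtain the multiorder factor (giving $h(\nu,G)=0$), and use Remark~\ref{pooetomo} to identify the successor map $S$ with $T$ so that $(\tilde\O,\nu,\tilde S)$ is a factor of a zero-entropy $\Z$-action. The obstacle you flag -- that the $\Z$-action from orbit equivalence alone could have positive entropy -- is exactly what the paper's invocation of Dye's theorem is there to overcome, so your account matches the paper's proof in substance.
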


\begin{proof}
	We start by selecting an ergodic free zero entropy action $(X,\mu,G)$ (whose existence follows by the same argument as in the proof of Corollary~\ref{34}). Since $(X,\mu,G)$ is orbit equivalent to a $\Z$-action and, by the Dye Theorem \cite{Dy}, all ergodic $\Z$-actions are mutually orbit equivalent, there exists a $\Z$-action 
	$(X,\mu,T)$ of entropy zero having the same orbits as $(X,\mu,G)$. By Theorem \ref{oetomo}, there exists a multiorder $(\tilde O,\nu,G)$ which is a factor of $(X,\mu,G)$ (hence has entropy zero) and such that $(\tilde O,\nu,\tilde S)$ is a factor of $(X,\mu,S)$. By Remark \ref{pooetomo}, $S=T$, hence $(X,\mu,S)$ has entropy zero. This implies that $(\tilde O,\nu,\tilde S)$ has entropy zero, as required.
\end{proof}

\begin{proof}[Proof of Theorem \ref{motooe}]
	Clearly, the map $S:X\to X$ defined by \eqref{full} is measurable. For the orbit equivalence between $(X,\mu,G)$ and $(X,\mu,S)$ it suffices to prove \eqref{TD} for $S$ defined by \eqref{full}. Indeed, since $k^\prec$ (with $k\in\Z$) ranges over the entire group~$G$, \eqref{TD} implies that the orbits $\{S^k(x):k\in\Z\}$ and $G(x)=\{g(x):g\in G\}$ are equal.
	We will first show \eqref{TD} for $k\ge 0$, by induction. 
	Clearly, \eqref{TD} is true for $k=0$ and, by \eqref{full}, for $k=1$. Suppose it holds for some $k\ge 1$. Then
	\begin{multline*}
		S^{k+1}(x) = S(k^\prec(x))= 1^{\varphi(k^\prec(x))}(k^\prec(x))=1^{k^\prec(\varphi(x))}(k^\prec(x))=\\1^{k^\prec(\prec)}(k^\prec(x))=
		1^{g(\prec)}(g(x)),
	\end{multline*}
	where $g=k^\prec$. By \eqref{16} and \eqref{17} (applied to $i=1$), we have
	\begin{equation*}
		1^{g(\prec)}=(k+1)^\prec g^{-1}.
	\end{equation*}
	Eventually, $S^{k+1}(x)=(k+1)^\prec g^{-1}(g(x))=(k+1)^\prec(x)$,
	and \eqref{TD} is shown for $k+1$. 
	
	Now consider the map $U(x)=(-1)^\prec(x)$, where, as before, $\prec\ =\varphi(x)$. By an inductive argument analogous as that used for $S$, one can show that ($\mu$-almost surely) for any $k\ge 0$ the following holds:
	\begin{equation*}
		U^k(x) = (-k)^\prec(x).
	\end{equation*}
	We will show that $U$ is the inverse map of $S$. Denote $x'=U(x)$ and $\prec'\ = \varphi(x')$. Then we have $x=((-1)^{\prec})^{-1}(x')$, and
	\begin{equation*}
		\prec'\ = \varphi(x')=\varphi(U(x))=\varphi((-1)^\prec(x))=(-1)^\prec(\varphi(x))=(-1)^\prec(\prec),
	\end{equation*}
	in other words,
	\begin{equation*}
		\prec\ = ((-1)^\prec)^{-1}(\prec').
	\end{equation*}
	
	By the second part of \eqref{17} applied to $g=(-1)^{\prec}$, we have
	\begin{equation*}
		((-1)^{\prec})^{-1}=1^{g(\prec)} =1^{(-1)^{\prec}(\prec)}=1^{\prec'},
	\end{equation*}
	and hence,
	\begin{equation*}
		x=1^{\prec'}(x')=1^{\varphi(x')}(x')=S(x') = S(U(x)).
	\end{equation*}
	By a symmetric argument we also have $x= U(S(x))$, which implies, on one hand, that $S$ is invertible (with the inverse $U$), and on the other hand, that \eqref{TD} holds for negative integers. This ends the proof of the orbit equivalence between $(X,\mu,G)$ and $(X,\mu,S)$.
	
	Since $(\tilde{\O},\nu,G)$ is a factor of itself, the first part of Theorem \ref{motooe} can be applied to $(\tilde{\O},\nu,G)$ in place of $(X,\mu,G)$. This implies that $(\tilde{\O},\nu,\tilde S)$ is orbit equivalent to $(\tilde{\O},\nu,G)$, in particular, $\tilde S$ preserves $\nu$. For $\prec\ =\varphi(x)$ we have
	\begin{equation*}
		\varphi(S(x))=\varphi(1^{\prec}(x))=1^{\prec}(\varphi(x))=1^{\prec}(\prec)=\tilde S(\varphi(x)),
	\end{equation*}
	where the central equality follows from the fact that $\varphi$ commutes with all elements of $G$. We have shown that $\varphi$ is a factor map between the $\Z$-actions $(X,\mu,S)$ and $(\tilde{\O},\nu,\tilde S)$, which completes the proof.
\end{proof}
\begin{rem}
	As pointed out by Alexandre Danilenko \cite{D0}, Theorem~\ref{motooe} can be alternatively proved basing on a more general approach developed in \cite{D1}.
\end{rem}

We pass to proving Theorem \ref{TM}. The proof is preceded by a lemma concerning amenable group $\Gamma$. Nonetheless, in the proof of Theorem~\ref{TM}, the lemma will be applied to $\Gamma=\Z$, not to the general group $G$. 

\begin{lem}\label{tom}
	Let $(X,\mu,\Gamma)$ be an ergodic measure-preserving action of a countable amenable group $\Gamma$ and let $(F_n)_{n\ge1}$ be a F\o lner \sq\ in $\Gamma$ along which the pointwise ergodic theorem holds.\footnote{For instance a \emph{tempered} F\o lner \sq, see \cite{L} for details.} Let $T:X\to X$ be a member of the full group of this action. Then, for $\mu$-almost every $x\in X$, any $\eps>0$ and $n$ sufficiently large, the following inequality holds:
	\begin{equation*}
		|F_n(x)\ \triangle\ T(F_n(x))|\le\eps|F_n|.
	\end{equation*}
\end{lem}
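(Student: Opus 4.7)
The plan is to use the cocycle structure provided by the full group, combined with the pointwise ergodic theorem and the F\o lner property, to count the ``bad'' $\gamma\in F_n$ that are responsible for the symmetric difference. Since $T$ belongs to the full group of the $\Gamma$-action, for $\mu$-almost every $x\in X$ there exists $\gamma_x\in\Gamma$ with $T(x)=\gamma_x(x)$, and likewise $T^{-1}(x)=\delta_x(x)$ for some $\delta_x\in\Gamma$. I will carry out the proof assuming the $\Gamma$-action is essentially free (which is the situation in every application of the lemma later in the paper); freeness guarantees that $\gamma_x$ and $\delta_x$ are uniquely determined and depend measurably on $x$, and also that $|F_n(x)|=|F_n|$. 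Identifying each orbit $\Gamma(x)$ with $\Gamma$ via $\gamma\mapsto\gamma(x)$, the symmetric difference $F_n(x)\,\triangle\, T(F_n(x))$ is controlled by the number of $\gamma\in F_n$ for which either $T(\gamma(x))\notin F_n(x)$ or $\gamma(x)\notin T(F_n(x))$, so it suffices to bound each of these by, say, $\tfrac{\eps}{2}|F_n|$.

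Given $\eps>0$, my next step is to truncate the cocycle: choose a finite set $K\subset\Gamma$ so that the measurable set
\begin{equation*}
B=\{x\in X: \gamma_x\in K\ \text{and}\ \delta_x\in K\}
\end{equation*}
satisfies $\mu(B)>1-\eps$. Such a $K$ exists because the values of $\gamma_x$ and $\delta_x$ exhaust only countably many group elements. The key observation is now that for any $\gamma\in F_n$ with $\gamma(x)\in B$, the image $T(\gamma(x))=\gamma_{\gamma(x)}\gamma(x)$ corresponds (under the orbit identification) to the group element $\gamma_{\gamma(x)}\gamma\in K\gamma$. Consequently $T(\gamma(x))\in F_n(x)$ whenever $K\gamma\subset F_n$, i.e.\ whenever $\gamma$ lies in the $K$-core $F_n^K:=\bigcap_{k\in K}k^{-1}F_n$ of $F_n$.

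The proof then concludes by combining two classical estimates. By the pointwise ergodic theorem along $(F_n)$ applied to the indicator function of $B^c$, for $\mu$-almost every $x$ the number of $\gamma\in F_n$ with $\gamma(x)\notin B$ is at most $\eps|F_n|$ for all sufficiently large $n$. By the F\o lner property of $(F_n)$ with respect to the finite set $K$, the complement $F_n\setminus F_n^K$ also has cardinality at most $\eps|F_n|$ for $n$ large. Adding these contributions yields $|T(F_n(x))\setminus F_n(x)|\le 2\eps|F_n|$, and the symmetric bound $|F_n(x)\setminus T(F_n(x))|\le 2\eps|F_n|$ follows by repeating the argument with $T^{-1}$ and its cocycle $\delta_x$. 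Rescaling $\eps$ gives the stated inequality. The main technical subtlety is that the ergodic averaging and the F\o lner control must be coordinated with the \emph{same} finite set $K$, chosen before $n$; this is not a real obstacle since both the pointwise ergodic theorem (for the tempered \sq\ $(F_n)$) and the F\o lner property hold for every finite subset of $\Gamma$, but it is precisely why the hypothesis on the validity of the pointwise ergodic theorem along $(F_n)$ is essential.
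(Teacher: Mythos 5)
Your proposal is correct and follows essentially the same strategy as the paper's proof: truncate the cocycle to a finite set $K\subset\Gamma$ on a set of measure $>1-\eps$, use the pointwise ergodic theorem along $(F_n)$ to control the fraction of $\gamma\in F_n$ landing outside that good set, and use the F\o lner property of $(F_n)$ with respect to $K$ to control the rest. The one substantive difference is your added hypothesis that the $\Gamma$-action is essentially free. The lemma is stated without that assumption, and the paper's proof does not need it: instead of identifying the orbit with $\Gamma$ via a bijection, the paper works with the countable measurable \emph{cover} $X_\gamma=\{x:T(x)=\gamma(x)\}$ and bounds $|T(F_n'(x))\setminus F_n(x)|\le|(KF_n\setminus F_n)(x)|\le|KF_n\setminus F_n|$ directly, so neither uniqueness of $\gamma_x$ nor the equality $|F_n(x)|=|F_n|$ is ever used. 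Your argument adapts to the general case with the same fix (a measurable selection of $\gamma_x$ and replacing equalities of cardinalities by inequalities), and in the only application in the paper (the $\Z$-action generated by the successor map of a free $G$-action) the action is indeed free, so your restricted version would suffice there; still, as written your proof establishes a weaker statement than the lemma. A small additional remark: for the second half of the symmetric difference the paper avoids rerunning the argument for $T^{-1}$ by observing that $T$ is a bijection, so $|F_n(x)\setminus T(F_n(x))|=|T(F_n(x))\setminus F_n(x)|$; your route via the cocycle of $T^{-1}$ also works but costs an extra truncation.
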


\begin{proof}The sets $X_\gamma=\{x\in X: T(x)=\gamma(x)\}$ ($\gamma\in\Gamma$) form a countable, measurable cover of $X$ (if the action of $\Gamma$ on $X$ is free, it is a partition). Thus, there exists a~finite set $K\subset\Gamma$ such that $\mu(X_K)>1-\frac\eps4$, where $X_K=\bigcup_{\gamma\in K}X_\gamma$. Let 
	\begin{equation*}
		F_n'=\{\beta\in F_n: \beta(x)\in X_K\}.
	\end{equation*} By the pointwise ergodic theorem, for $\mu$-almost every $x\in X$, for $n$ large enough, we have
	$|F'_n|>(1-\tfrac\eps4)|F_n|$.
	Note that $T(F_n'(x))\subset KF_n(x)$. Additionally, if $n$ is large enough then $F_n$ is $(K,\frac\eps4)$-\inv\footnote{A set $A\subset G$ is $(g,\varepsilon)$-invariant for some $g\in G$, if $\frac{|A\triangle gA|}{|A|}<\varepsilon$, where $\triangle$ denotes the symmetric difference. Similarily, for $K\subset G$, the set $A$ is $(K,\varepsilon)$-invariant if $\frac{|A\triangle KA|}{|A|}<\varepsilon$.}. Hence, we can write
	\begin{multline*}
		|T(F_n(x))\setminus F_n(x)|\le |T(F_n'(x))\setminus F_n(x)| + |T(F_n(x))\setminus T(F'_n(x))|\le\\|(KF_n\setminus F_n)(x)|+|(F_n\setminus F'_n)(x)|\le 
		|KF_n\setminus F_n|+|F_n\setminus F'_n|\le \tfrac\eps4|F_n|+\tfrac\eps4|F_n|=\tfrac\eps2|F_n|.
	\end{multline*}
	Since $T$ is invertible, the sets $F_n(x)$ and $T(F_n(x))$ have equal cardinalities, and hence the other difference $F_n(x)\setminus T(F_n(x))$ has the same cardinality, just shown to be smaller than $\frac\eps2|F_n|$. Thus the symmetric difference has cardinality less than $\eps|F_n|$ and the proof is completed.
\end{proof}

\begin{proof}[Proof of Theorem \ref{TM}] Let $(\tilde{\O},\nu,G)$ be a multiorder on a countable group~$G$. By a standard ergodic decomposition argument, it suffices to prove the theorem in case $\nu$ is ergodic. There exists a free ergodic action $(X,\mu,G)$ which has the multiorder $(\tilde\O,\nu,G)$ as a factor via a factor map $\varphi$ (for instance, take an ergodic joining\footnote{A joining of two measure-preserving systems, say $(X,\mu, G)$ and $(Y,\nu,G)$ is any measure on $X\times Y$ (usually denoted as $\mu\vee\nu$) invariant under the product action of $G$, and whose respective marginals are $\mu$ and $\nu$. It is well known that if both $\mu$ and $\nu$ are ergodic then there exists an ergodic joining $\mu\vee\nu$ (see e.g.\ \cite[Proposition~1.4]{TR}; the same proof applies to general countable group actions).} of the multiorder with a Bernoulli $G$-process, and $\varphi$ being the projection on the first coordinate). 
	Let $S:X\to X$ be given by~\eqref{full}. Then, any $g\in G$, viewed as a~transformation of $X$, belongs to the full group of the $\Z$-action on $X$ generated by~$S$. 
	
	Now, Lemma \ref{tom} can be applied to the $\Z$-action on $X$ given by the iterates of $S$ in the role of the action of $\Gamma$, the classical F\o lner \sq\ $F_n=[0,n]$ in $\Z$, and $g$ (viewed as a mapping on $X$) in the role of $T$. The lemma yields that for $\mu$-almost every $x\in X$, any $\eps>0$ and $n$ sufficiently large, we have 
	\begin{equation*}
		|\{x,S(x),\dots, S^n(x)\}\ \triangle\ g(\{x,S(x),\dots, S^n(x)\})|\le \eps(n+1).	
	\end{equation*}
	By \eqref{TD}, the above means that
	\begin{equation*}
		|[0,n]^\prec(x)\ \triangle\ g([0,n]^\prec)(x)|\le \eps(n+1),
	\end{equation*}
	where $\prec\ =\varphi(x)$. Because the action of $G$ on $X$ is free, we can skip $x$, and get
	\begin{equation*}
		|[0,n]^\prec\ \triangle\ g([0,n]^\prec)|\le \eps(n+1).
	\end{equation*}
	We have shown that for $\nu$-almost every $\prec\ \in\tilde\O$, any $g\in G$ and every $\eps>0$, the order intervals $[0,n]^\prec$ are eventually $(g,\eps)$-\inv. This ends the proof.
\end{proof}

\section{Conditional entropy with respect to a multiorder}
This section contains the formula for the conditional entropy of a measure-preserving $G$-action which has a multiorder as a measure-theoretic factor.

\subsection{Preparatory lemmas}
Throughout this subsection we assume that $(\tilde\O,\nu,G)$ is a multiorder on a countable amenable group $G$. The F\o lner property of the multiorder, although it holds, is not used yet. Nor we assume zero entropy of $\nu$. The following two lemmas will be very useful. 

\begin{lem}\label{nono}
	Let $(F_m)_{m\in\N}$ be a fixed F\o lner \sq\ in $G$. We also fix some $n\in\N$ and $\eps>0$.
	Then, for sufficiently large $m\in\N$ there exists a subset $\tilde\O'_m\subset\tilde\O$ with $\nu(\tilde\O'_m)>1-\eps$, such that for every $\prec\ \in\tilde\O'_m$ we have
	\begin{equation*}
		\frac{|[F_m, F_m+n]^\prec|}{|F_m|}<1+\eps \text{ \ and \ }\frac{|[F_m-n, F_m]^\prec|}{|F_m|}<1+\eps
	\end{equation*}
	(see \eqref{TD} for the meaning of $[F_m,F_m+n]^\prec$ and $[F_m-n,F_m]^\prec$).
\end{lem}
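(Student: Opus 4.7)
The plan is to reduce the bound on $|[F_m, F_m + n]^\prec|$ to $n$ separate bounds, one for each $j \in \{1, \ldots, n\}$, on the quantity $|\sigma_j(\prec, F_m) \setminus F_m|$, where $\sigma_j(\prec, g)$ denotes the $j$-th successor of $g$ in the order $\prec$. Writing $\sigma_j(\prec, g) = (k+j)^\prec$ with $k$ the unique integer satisfying $k^\prec = g$, formula \eqref{16} yields the key identity
\begin{equation*}
\sigma_j(\prec, g) = j^{g(\prec)} \cdot g,
\end{equation*}
which expresses the ``random successor'' of $g$ in a form amenable to the $G$-invariance of $\nu$.

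First I would observe the inclusion $[F_m, F_m + n]^\prec \setminus F_m \subset \bigcup_{j=1}^n (\sigma_j(\prec, F_m) \setminus F_m)$, and since the map $g \mapsto \sigma_j(\prec, g)$ is a bijection of $G$, one has $|\sigma_j(\prec, F_m) \setminus F_m| = \#\{g \in F_m : \sigma_j(\prec, g) \notin F_m\}$. Let $\mu_j$ denote the distribution on $G$ of the random variable $\prec \mapsto j^\prec$ under $\nu$. Substituting the key identity into the indicator of the event $\{\sigma_j(\prec, g) \notin F_m\}$, invoking the $G$-invariance of $\nu$ to replace $g(\prec)$ by $\prec$ in the integral, and rearranging the double sum over $g \in F_m$ and $h \in G$, one obtains
\begin{equation*}
\int_{\tilde\O} |\sigma_j(\prec, F_m) \setminus F_m|\, d\nu(\prec) = \int_G |F_m \setminus h^{-1} F_m|\, d\mu_j(h).
\end{equation*}

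For each fixed $h \in G$, the Følner property of $(F_m)$ gives $|F_m \setminus h^{-1} F_m|/|F_m| = |hF_m \setminus F_m|/|F_m| \to 0$ as $m \to \infty$, and this ratio is trivially bounded by $1$. Since $\mu_j$ is a probability measure on the countable group $G$, the dominated convergence theorem implies that the right-hand side above, divided by $|F_m|$, tends to $0$. Markov's inequality then bounds the $\nu$-measure of the bad set $\{\prec : |\sigma_j(\prec, F_m) \setminus F_m| > (\eps/n)\,|F_m|\}$, and a union bound over $j \in \{1, \ldots, n\}$ (with the budget $\eps$ distributed accordingly) completes the proof of the first inequality.

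The second inequality is obtained by the completely parallel argument with $j$ replaced by $-j$, using the distribution $\mu_{-j}$ of $\prec \mapsto (-j)^\prec$; the identity $\sigma_{-j}(\prec, g) = (-j)^{g(\prec)} \cdot g$ and the rest of the computation are identical. I expect the only step requiring real care to be the change-of-variables calculation leading to the displayed expectation formula: one must correctly invoke the $G$-invariance of $\nu$ to collapse the (a priori $g$-dependent) law of $j^{g(\prec)}$ to the single fixed law $\mu_j$, so that the sum over $g \in F_m$ cleanly reassembles into $|F_m \setminus h^{-1} F_m|$ integrated against $\mu_j$. All remaining ingredients -- Følner, dominated convergence, Markov -- are routine.
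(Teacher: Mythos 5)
Your proof is correct, but it takes a genuinely different route from the paper's. The paper first reduces to ergodic $\nu$, partitions $\tilde\O$ according to the ``shape'' $K=[0,n]^\prec$, truncates to a finite family of shapes $\K'$ with union $L$, and then combines the mean ergodic theorem (to say that for most $\prec$ most $g\in F_m$ satisfy $g(\prec)\in\tilde\O_{\K'}$) with the $L$-core estimate for $F_m$ to conclude that $[g,g+n]^\prec\subset F_m$ for all but a $2\delta$-fraction of $g\in F_m$. Your argument instead is a clean first-moment computation: using the identity $\sigma_j(\prec,g)=j^{g(\prec)}\cdot g$ (which is indeed what \eqref{16} gives) and the $G$-invariance of $\nu$, you express $\int|\sigma_j(\prec,F_m)\setminus F_m|\,\mathrm d\nu$ as $\int_G|F_m\setminus h^{-1}F_m|\,\mathrm d\mu_j(h)$, kill it with dominated convergence and the F\o lner property, and finish with Markov and a union bound. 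This avoids the ergodic decomposition and the mean ergodic theorem altogether (only the invariance of $\nu$ is used, exactly as in Lemma \ref{trud}), and is arguably shorter and more elementary; what the paper's version buys is mainly stylistic consistency with the shape-partition arguments used elsewhere. Two cosmetic points: your bound yields $|[F_m,F_m+n]^\prec|\le(1+\eps)|F_m|$ rather than the strict inequality claimed, so run the argument with $\eps/2$; and note that the second inequality also follows at once from the first since the $n$-th successor map is a bijection of $[F_m-n,F_m]^\prec$ onto $[F_m,F_m+n]^\prec$, as the paper observes.
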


\begin{proof}
	
	Firstly, observe that $|[F_m-n,F_m]^\prec|=|[F_m,F_m+n]^\prec|$ for all $\prec\, \in\tilde{\O}$, so we can focus on satisfying the first inequality only. Secondly, note that if the lemma holds for all ergodic measures $\nu$ then, by a standard decomposition argument, it holds for all \im s $\nu$ as well. Thus, we can assume ergodicity of the measure $\nu$. 
	
	Let $\mathcal K$ denote the family of all subsets of $G$ of cardinality $n\!+\!1$ containing the unit and let $\delta=\frac{\varepsilon}{2n+2}$. We have the following disjoint union:
	\begin{equation*}
		\tilde\O=\bigsqcup_{K\in\K}\tilde\O_K,
	\end{equation*}
	where $\tilde\O_K=\{\prec\ \in\tilde\O:[0,n]^\prec\, = K\}$. 
	Because the union is countable, there exists a finite subset $\K'\subset\K$ such that, if we denote $\tilde\O_{\K'}=\bigsqcup_{K\in\K'}\tilde\O_K$, then
	\begin{equation*}
		\nu(\tilde\O_{\K'})=\sum_{K\in\K'}\nu(\tilde\O_K)>1-\delta.
	\end{equation*}
	Let
	\begin{equation*}
		L=\bigcup_{K\in\K'}K
	\end{equation*}
	(clearly, $L$ is a finite subset of $G$).
	For sufficiently large $m$, the set $F_m$ is $(L,\frac\delta{|L|})$-\inv. Then the $L$-core\footnote{For two finite sets $F$ and $K$, the \emph{$K$-core} of $F$ (usually denoted by $F_K$) is defined as the set $\{g\in F: Kg\subset F\}$.} of $F_m$, which we denote by $(F_m)_L$, has cardinality at least $(1-\delta)|F_m|$.\footnote{We are using the elementary fact that if a set $F$ is $(K,\eps)$-\inv\ then the $K$-core $F_K$ has cardinality at least $(1-|K|\eps)|F|$, see e.g. \cite[Lemma 2.6]{DHZ}.} By the mean ergodic theorem, 
	for sufficiently large $m$ there exists a set $\tilde\O'_m\subset\tilde\O$ with $\nu(\tilde\O'_m)>1-\eps$, such that for all $\prec\ \in\tilde\O'_m$ we have
	\begin{equation*}
		|\{g\in F_m: g(\prec)\in \tilde\O_{\K'}\}|>(1-\delta)|F_m|.
	\end{equation*}
	Combining this with the estimate of the cardinality of the $L$-core $(F_m)_L$, we obtain, for $m$ large enough,
	\begin{equation*}
		|\{g\in (F_m)_L: g(\prec)\in \tilde\O_{\K'}\}|>(1-2\delta)|F_m|.
	\end{equation*}
	Observe that $g(\prec)\in \tilde\O_{\K'}\iff [0,n]^{g(\prec)}\in\K'$. By \eqref{16} and \eqref{17}, for $k$ satisfying $g=k^\prec$, we can write
	\begin{equation*}
		[0,n]^{g(\prec)}=[k,k+n]^{\prec}\cdot g^{-1}=[g,g+n]^\prec\cdot g^{-1}.
	\end{equation*}
	Then $[g,g+n]^\prec\,=Kg$, where $K\in\K'$, implying $[g,g+n]^\prec\subset Lg$. If also $g\in(F_m)_L$ then $Lg\subset F_m$ and hence $[g,g+n]^\prec\subset F_m$. Summarizing, we have shown that, if we denote by $F'_m$ the set $\{g\in (F_m)_L: g(\prec)\in \tilde\O_{\K'}\}$ (which is a subset of $F_m$ of cardinality strictly larger than $(1-2\delta)|F_m|$), then $[F_m',F_m'+n]^\prec\subset F_m$. Obviously,
	\begin{equation*}
		[F_m,F_m+n]^\prec\setminus F_m\subset[F_m,F_m+n]^\prec\setminus[F'_m,F'_m+n]^\prec\subset[(F_m\setminus F'_m), (F_m\setminus F'_m)+n]^\prec,
	\end{equation*}
	and hence we conclude that
	\begin{equation*}
		|[F_m,F_m+n]^\prec\setminus F_m|\le (n+1)|F_m\setminus F'_m|< (n+1)2\delta|F_m|=\eps|F_m|,
	\end{equation*}	
	which is exactly what we needed to show. 
\end{proof}

\begin{lem}\label{trud}
	Let $J:\tilde\O\to\mathbb R$ be a bounded measurable function. Then, for each $j\in\N$ we have
	\begin{equation*}
		\int J(j^\prec(\prec))\,\mathrm d\nu=\int J(\prec)\,\mathrm d\nu\ \footnote{Whenever we write $\int \cdots \mathrm d\nu$, we mean $\int\cdots \mathrm d\nu(\prec)$, i.e.\ we never use integration with respect to $\nu$, where the variable is denoted by a symbol different from $\prec$.}
	\end{equation*}
	(here $j^\prec(\prec)$ stands for the image $g(\prec)$ of $\prec$ by the element $g=j^\prec$ in the action defined by~\eqref{ac}).
\end{lem}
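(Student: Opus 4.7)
The plan is to recognize the map $\prec\mapsto j^\prec(\prec)$ as the $j$-th iterate of the successor map $\tilde S$ on $\tilde\O$, and then invoke the invariance of $\nu$ under $\tilde S$ established in Theorem~\ref{motooe}. Concretely, I would apply Theorem~\ref{motooe} to the multiorder $(\tilde\O,\nu,G)$ viewed as a factor of itself (i.e.\ with $\varphi=\mathrm{id}$). That theorem asserts two things that matter here: first, the map $\tilde S:\tilde\O\to\tilde\O$ defined by $\tilde S(\prec)=1^\prec(\prec)$ preserves the measure $\nu$; second, the identity \eqref{TD}, which in the self-factor case reads $\tilde S^k(\prec)=k^\prec(\prec)$ for every $k\in\Z$ and $\nu$-almost every $\prec$.

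Once these two ingredients are in place, the lemma follows by a one-line computation. Indeed, since $\tilde S$ preserves $\nu$, so does every iterate $\tilde S^j$ with $j\in\N$. Therefore
\begin{equation*}
\int J(j^\prec(\prec))\,\mathrm d\nu=\int J(\tilde S^j(\prec))\,\mathrm d\nu=\int J(\prec)\,\mathrm d\nu,
\end{equation*}
which is exactly the desired equality. Boundedness and measurability of $J$ ensure that both integrals are well defined; no F\o lner or entropy assumption on $\nu$ is needed.

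There is essentially no obstacle beyond making sure that Theorem~\ref{motooe} is legitimately applicable in the degenerate situation where the ``ambient'' $G$-action coincides with the multiorder itself, and noting that the theorem's conclusions (invariance of $\tilde S$ and the iteration formula \eqref{TD}) do not require freeness of the $G$-action on $\tilde\O$, as explicitly pointed out in the remark following Theorem~\ref{motooe}. Thus the whole proof reduces to citing the already proved facts about $\tilde S$ and performing the substitution $k=j$ in \eqref{TD}.
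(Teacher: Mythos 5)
Your proof is correct, but it takes a genuinely different route from the paper's. The paper proves Lemma~\ref{trud} by a direct, self-contained computation: it partitions $\tilde\O$ into the countably many Borel sets $\tilde\O_g=\{\prec\ \in\tilde\O: j^\prec=g\}$, applies the $G$-invariance of $\nu$ to each piece separately, and then uses \eqref{17} to check that the images $g(\tilde\O_g)$ form another partition of $\tilde\O$, so the sub-integrals reassemble into $\int J\,\mathrm d\nu$. You instead identify the map $\prec\ \mapsto j^\prec(\prec)$ as $\tilde S^{\,j}$ via \eqref{TD} applied to $\tilde\O$ as a factor of itself, and quote the $\nu$-invariance of $\tilde S$ from Theorem~\ref{motooe}; since that theorem is proved in Section~3, before the lemma, there is no circularity, and you are right that the remark following the theorem dispenses with any freeness hypothesis. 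The two arguments are close relatives: the $\nu$-invariance of $\tilde S$ in Theorem~\ref{motooe} is itself deduced from the fact that every element of the full group of a measure-preserving action preserves the measure, a fact the paper states without proof and whose verification is precisely the piecewise decomposition that the paper's proof of Lemma~\ref{trud} carries out by hand. So your version is shorter and more conceptual at the cost of leaning on that cited fact, while the paper's version keeps the lemma elementary and independent of the orbit-equivalence machinery.
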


\begin{proof}
	This is a direct consequence of Theorem~\ref{motooe}, more preciscely of the facts that the successor map $\tilde S$ on $\tilde \O$ preserves the measure $\nu$, and $j^{\prec}(\prec)={\tilde S}^j(\prec)$ for all $\prec\,\in\tilde\O$.
\end{proof}

\subsection{Entropy of a multiordered system}\label{tu}

\begin{defn}
	By a \emph{multiordered dynamical system} (or, more precisely, a \emph{multiordered action} of $G$ on $X$), denoted by $(X,\mu,G,\varphi)$, we will mean a measure-preserving $G$-action $(X,\mu,G)$ with a fixed measure-theoretic factor map\break$\varphi:(X,\mu,G)\to(\tilde\O,\nu,G)$ to a multiorder $\tilde\O$ equipped with an \im\ $\nu$ (provided such a factor map exists).
\end{defn}
Any dynamical system with an action of a countable amenable group can be turned into a multiordered one by joining it (for example via the product joining), with a multiorder. In such case, the factor map $\varphi$ is by default the projection onto the second coordinate. Moreover, by Theorem~\ref{exi}, we can always choose to use a~multiorder of entropy zero, in which case the joining maintains the entropy of the original system.

\begin{defn}\label{past}Let $(X,\mu,G,\varphi)$ be a multiordered \ds. Fix a finite measurable partition $\P$ of $X$. For a subset $D\subset G$ we will denote
	\begin{equation*}
		\P^D=\bigvee_{g\in D}g^{-1}(\P).
	\end{equation*}
	The sigma-algebras $\P^-_\prec\,=\P^{(-\infty,-1]^\prec}$ and $\P^+_\prec\,=\P^{[1,\infty)^\prec}$ are called respectively the \emph{past} and the \emph{future} of $\P$ with respect to $\prec\ \in\tilde\O$.
\end{defn}

Before we continue, we establish some basic definitions concerning entropy in actions of countable amenable groups. Given a measure-preserving system $(X,\mu,\Sigma,G)$ and a finite measurable partition $\P$ of $X$, by the entropy of $\P$ we will mean
\begin{equation*}
	H(\mu,\P)=-\sum_{P\in\P}\mu(P)\log{\mu(P)}.
\end{equation*}
If $\Theta$ is a $G$-invariant sub-sigma-algebra of $\Sigma$, then the conditional entropy of $\P$ with respect to $\Theta$ equals
\begin{equation*}
	H(\mu,\P|\Theta)=\inf_{\Q}\bigl(H(\mu,\P\vee\Q)-H(\mu,\Q)\bigr),
\end{equation*} 
where $\Q$ ranges over all finite partitions of $X$ measurable with respect to~$\Theta$.
The dynamical entropy of the process generated by $\P$ is defined by the formula
\begin{equation*}
	h(\mu,G,\P)=\lim\limits_{n\to\infty}\frac{1}{|F_n|}H(\mu,\P^{F_n}),
\end{equation*}
where $(F_n)_{n\in\N}$ is a F{\o}lner sequence in $G$ (the definition does not depend on a F{\o}lner sequence).
The conditional entropy of the process generated by $\P$ with respect to~$\Theta$ equals
\begin{equation*}
	h(\mu,G,\P|\Theta)=\lim\limits_{n\to\infty}\frac{1}{|F_n|}H(\mu,\P^{F_n}|\Theta).
\end{equation*}
The Kolmogorov--Sinai entropy of a dynamical system $h(\mu,G)$ is by definition the supremum of $h(\mu,G,\P)$ over all finite measurable partitions $\P$ of~$X$.
\begin{thm}\label{newentropy} Let $(X,\mu,G,\varphi)$ be a multiordered \ds\ and let $\P$ be a finite measurable partition of $X$. Let $\{\mu_{\prec}:\,\prec\ \in\tilde\O\}$ denote the disintegration of~$\mu$ with respect to $\nu=\varphi(\mu)$. Let $\Sigma_{\tilde\O}$ denote the invariant sub-sigma-algebra on $X$ obtained by lifting the Borel sets in $\tilde\O$ against the factor map $\varphi$.
	Then
	\begin{equation*}
		h(\mu,G,\P|\Sigma_{\tilde\O})=\int H(\mu_{\prec},\P|\P_{\prec}^-)\,\mathrm d\nu = \int H(\mu_{\prec},\P|\P_{\prec}^+)\,\mathrm d\nu. 
	\end{equation*}
\end{thm}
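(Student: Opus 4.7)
The plan is to apply the chain rule for conditional entropy within each fiber $\mu_\prec$, enumerating a suitable F\o lner set along the order $\prec$, and then exploit the $\tilde S$-invariance of $\nu$ established by Theorem~\ref{motooe} (via Lemma~\ref{trud}) together with monotone convergence of conditional entropy.

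The first step is to fix a F\o lner sequence $(F_n)$ in $G$ and, for each order $\prec$, to enumerate $F_n$ according to $\prec$, say as $g_0\prec g_1\prec\cdots\prec g_{|F_n|-1}$. Applying the chain rule within $\mu_\prec$ gives
\begin{equation*}
H(\mu_\prec,\P^{F_n})=\sum_{i=0}^{|F_n|-1}H\bigl(\mu_\prec,g_i^{-1}\P\,\big|\,\textstyle\bigvee_{l<i}g_l^{-1}\P\bigr).
\end{equation*}
Using the identities \eqref{16}--\eqref{17}, the bijection $h\mapsto hg_i^{-1}$ carries $(G,\prec)$ isomorphically onto $(G,g_i(\prec))$ and maps $g_i$ to $e$. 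Also, because $\varphi$ is equivariant and the disintegration is essentially unique, $g_i\mu_\prec=\mu_{g_i(\prec)}$ for $\nu$-a.e.\ $\prec$. Hence, after acting by $g_i$, the $i$-th summand becomes
\begin{equation*}
H\bigl(\mu_{g_i(\prec)},\P\,\big|\,\P^{\,F_ng_i^{-1}\cap(-\infty,-1]^{g_i(\prec)}}\bigr).
\end{equation*}

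The second step is to replace the conditioning set $F_ng_i^{-1}\cap(-\infty,-1]^{g_i(\prec)}$ by $[-M,-1]^{g_i(\prec)}$ for a large fixed $M$, up to an error which vanishes as $n\to\infty$. This is precisely where Lemma~\ref{nono} and the F\o lner property of $(F_n)$ are used: for any $\eps>0$ and any $M$, when $n$ is large, at least a $(1-\eps)$-fraction of indices $i\in\{0,\dots,|F_n|-1\}$ satisfy $[-M,-1]^{g_i(\prec)}\subset F_ng_i^{-1}$ on a set of orders of $\nu$-measure $>1-\eps$. For such ``interior'' indices the $i$-th summand lies between $H(\mu_{g_i(\prec)},\P\,|\,\P_{g_i(\prec)}^-)$ and $H(\mu_{g_i(\prec)},\P\,|\,\P_{g_i(\prec)}^{[-M,-1]})$; the remaining ``boundary'' indices contribute $O(\eps\,|F_n|\log|\P|)$.

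The third step is to average and integrate. Dividing by $|F_n|$, integrating against $\nu$, and invoking Lemma~\ref{trud} (which reflects the $\tilde S$-invariance of $\nu$ and allows the identity $\int J(g_i(\prec))\,\mathrm d\nu=\int J(\prec)\,\mathrm d\nu$ for each $i$), the contribution of interior indices is bounded between
\begin{equation*}
\int H(\mu_\prec,\P\,|\,\P_\prec^-)\,\mathrm d\nu\qquad\text{and}\qquad\int H(\mu_\prec,\P\,|\,\P_\prec^{[-M,-1]})\,\mathrm d\nu.
\end{equation*}
Letting first $n\to\infty$ and then $M\to\infty$, the bounded-convergence theorem (combined with the monotone convergence of conditional entropies $H(\mu_\prec,\P\,|\,\P_\prec^{[-M,-1]})\downarrow H(\mu_\prec,\P\,|\,\P_\prec^-)$) shows that both bounds tend to $\int H(\mu_\prec,\P\,|\,\P_\prec^-)\,\mathrm d\nu$. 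Since $\eps>0$ is arbitrary, $\tfrac{1}{|F_n|}H(\mu,\P^{F_n}\,|\,\Sigma_{\tilde\O})=\tfrac{1}{|F_n|}\int H(\mu_\prec,\P^{F_n})\,\mathrm d\nu$ converges to this same value, giving the first equality of the theorem.

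The ``future'' version follows by the symmetric argument: reversing every $\prec\in\tilde\O$ yields another multiorder whose past coincides with the original future, and the first part applied to it produces $\int H(\mu_\prec,\P\,|\,\P_\prec^+)\,\mathrm d\nu$ for the same conditional entropy. The main obstacle is the boundary control in the second step—showing that the vast majority of indices $i$ are ``deep enough'' in $F_n$ in the $\prec$-order so that their $\prec$-predecessors in $F_n$ already contain a long $\prec$-interval; this is exactly what Lemma~\ref{nono} provides.
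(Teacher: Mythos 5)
Your argument is correct, but it follows a genuinely different route from the paper's. The paper first introduces the auxiliary quantity $\bar h(\mu,\P,\varphi)=\lim_n\frac1{n+1}\int H(\mu_{\prec},\P^{[0,n]^{\prec}})\,\mathrm d\nu$, evaluates it by the chain rule along the order intervals $[0,n]^{\prec}$ anchored at $e$ (there Lemma~\ref{trud} is indispensable, because the $j$th summand is transported by the $\prec$-dependent element $j^{\prec}$), and only afterwards identifies $\bar h$ with $h(\mu,G,\P|\Sigma_{\tilde\O})$ through two separate applications of Shearer's inequality, one of which requires the F\o lner property of the multiorder (Theorem~\ref{TM}). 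You instead run the chain rule directly on $\P^{F_n}$ along a $\prec$-enumeration of the F\o lner set $F_n\subset G$ and sandwich the $i$th summand between $H(\mu_{g_i(\prec)},\P|\P^-_{g_i(\prec)})$ (valid for \emph{every} index, since passing from $F_ng_i^{-1}\cap(-\infty,-1]^{g_i(\prec)}$ to all of $(-\infty,-1]^{g_i(\prec)}$ only enlarges the conditioning algebra) and $H(\mu_{g_i(\prec)},\P|\P^{[-M,-1]^{g_i(\prec)}})$ (valid for the interior indices supplied by Lemma~\ref{nono}); dividing by $|F_n|$, integrating, and letting $n\to\infty$, $\eps\to0$, $M\to\infty$ closes both inequalities. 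This bypasses $\bar h$, Shearer's inequality, and Theorem~\ref{TM} altogether, which is a genuine simplification (and shows that the reliance on the F\o lner property of the multiorder, described in Remark~\ref{amr} as ``seemingly inevitable'', can in fact be avoided); what the paper's detour buys is the intermediate formula \eqref{hbar} for the entropy computed along order intervals, which has some independent interest. Three small points, none of which affects correctness: (i) your appeal to Lemma~\ref{trud} is superfluous --- since the summation set $F_n$ is fixed and only its enumeration depends on $\prec$, you have $\sum_i J(g_i(\prec))=\sum_{g\in F_n}J(g(\prec))$, and plain $G$-invariance of $\nu$ already gives $\int J(g(\prec))\,\mathrm d\nu=\int J\,\mathrm d\nu$ for each fixed $g$; Lemma~\ref{trud} is needed only when the translating element itself varies with $\prec$; (ii) the proportion of boundary indices extracted from Lemma~\ref{nono} is $M\eps$ rather than $\eps$ (each element of $[F_n-M,F_n]^{\prec}\setminus F_n$ may be a near-predecessor of up to $M$ elements of $F_n$), which is harmless because $M$ is fixed before $\eps$; (iii) the reversal map $\prec\ \mapsto\ \prec^{\mathrm{op}}$ indeed commutes with the action \eqref{ac}, preserves the type-$\Z$ property and generates the same sub-sigma-algebra $\Sigma_{\tilde\O}$ with the same disintegration, so your derivation of the ``future'' formula is sound.
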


\begin{rem}\label{amr}
	For an IRO we have a similarly looking formula (see \cite{AMR}):
	\begin{equation*}
		h(\mu,G,\P) = \int H(\mu,\P|\P_{\prec}^-)\,\mathrm d\nu.
	\end{equation*}
	The proof of the above formula is much shorter than that of Theorem \ref{newentropy}. In particular, the proof of Theorem \ref{newentropy} relies (seemingly inevitably) on the F\o lner property of the multiorder, which is absent in \cite{AMR}. The simplicity of the proof in \cite{AMR} seems to rely on the fact that the measure $\mu$ under the integral does not depend on~$\prec$. 
\end{rem}

\begin{proof}[Proof of Theorem \ref{newentropy}]
	We define an auxiliary entropy notion, as follows
	\begin{equation}\label{hbar}
		\bar h(\mu,\P,\varphi)=\lim_{n\to\infty}\frac1{n+1}\int H(\mu_{\prec},\P^{[0,n]^{\prec}})\,\mathrm d\nu.
	\end{equation}
	For every $n\in\N$ we have the following equality:
	\begin{multline*}
		H(\mu_{\prec},\P^{[0,n]^{\prec}})=\\
		H(\mu_{\prec},\P^{[0]^{\prec}})+H(\mu^{\prec},\P^{[1]^{\prec}}|\P^{[0]^{\prec}})+
		H(\mu_{\prec},\P^{[2]^{\prec}}|\P^{[0,1]^{\prec}})+\dots\\+
		H(\mu_{\prec},\P^{[n]^{\prec}}|\P^{[0,n-1]^{\prec}\}}).
	\end{multline*}
	Let us consider just the $j$th term of the above sum (keeping in mind that it will be eventually integrated with respect to $\nu$):
	\begin{equation*}
		H(\mu_{\prec},\P^{[j]^{\prec}}|\P^{[0,j-1]^{\prec}})
	\end{equation*}
	(for $j=0$ the conditioning partition disappears). Since the disintegration is that of an \im, it is equivariant,\footnote{Equivarance follows immediately from invariance of $\mu$ and uniqueness of the disintegration.} i.e.\ for any $g\in G$, it satisfies, for each measurable set $A\subset X$,
	\begin{equation}\label{eqv}
		\mu_{\prec}(A)=\mu_{g(\prec)}(g(A)).
	\end{equation}
	This implies that
	\begin{equation*}
		H(\mu_{\prec},\P^{[j]^{\prec}}|\P^{[0,j-1]^{\prec}})=
		H(\mu_{g(\prec)},g(\P^{[j]^{\prec}})|g(\P^{[0,j-1]^{\prec}})).
	\end{equation*}
	Note that, for any $D\subset G$ we have
	\begin{equation}\label{prze}
		g(\P^D)=\bigvee_{h\in D}(gh^{-1})(\P)=\bigvee_{h\in D}(hg^{-1})^{-1}(\P)=\P^{Dg^{-1}}.
	\end{equation}
	In particular, we obtain that
	\begin{equation*}
		H(\mu_{\prec},\P^{[j]^{\prec}}|\P^{[0,j-1]^{\prec}})=
		H(\mu_{g(\prec)},\P^{[j]^{\prec}g^{-1}}|\P^{[0,j-1]^{\prec}g^{-1}}).
	\end{equation*}
	
	So far $g\in G$ was arbitrary. Now let $g=j^{\prec}$. For each $i\in\{0,1,\dots,n\}$, by \eqref{16} and~\eqref{17}, we have
	\begin{equation*}
		i^{\prec}\cdot g^{-1}=(i-k)^{g(\prec)},
	\end{equation*} 
	where $k$ is such that $g=k^{\prec}$. But since $g = j^\prec$, we have $k=j$ and we conclude that $i^{\prec}\cdot g^{-1}=(i-j)^{g(\prec)}=(i-j)^{j^\prec(\prec)}$. So
	\begin{equation}\label{theabove}
		H(\mu_{\prec},\P^{[j]^{\prec}}|\P^{[0,j-1]^{\prec}})=
		H(\mu_{j^{\prec}(\prec)},\P^{[0]^{j^{\prec}(\prec)}}|\P^{[-j,-1]^{j^{\prec}(\prec)}}).
	\end{equation}
	If we define a measurable function $J:\tilde\O\to[0,\infty)$ by 
	\begin{equation*}
		J(\prec)=H(\mu_\prec,\P^{[0]^\prec}|\P^{[-j,-1]^\prec})
	\end{equation*}
	then \eqref{theabove} can be rewritten as
	\begin{equation*}
		H(\mu_{\prec},\P^{[j]^{\prec}}|\P^{[0,j-1]^{\prec}})=J(j^{\prec}(\prec)).
	\end{equation*}
	Since the function $J$ is bounded (by $\log{|\P|}$), we can use Lemma \ref{trud} and get 
	\begin{equation*}
		\int J(j^{\prec}(\prec))\,\mathrm d\nu = \int J(\prec)\,\mathrm d\nu,
	\end{equation*}
	which means that
	\begin{equation}\label{long}
		\int H(\mu_{\prec},\P^{[j]^{\prec}}|\P^{[0,j-1]^{\prec}})\,\mathrm d\nu=
		\int H(\mu_{\prec},\P^{[0]^{\prec}}|\P^{[-j,-1]^{\prec}})\,\mathrm d\nu.
	\end{equation}
	Now we can go back to the formula \eqref{hbar} defining $\bar h(\mu,\P,\varphi)$ and substitute the integrals according to \eqref{long}. Since $0^{\prec}=e$ and hence $\P^{[0]^{\prec}}=\P$ (for any $\prec\ \in\tilde\O$), we get
	\begin{equation*}
		\bar h(\mu,\P,\varphi)=\int\lim_{n\to\infty} \frac1{n+1}\sum_{j=0}^n
		H(\mu_{\prec},\P|\P^{[-j,-1]^{\prec}})\,\mathrm d\nu
	\end{equation*}
	(we have used the Lebesgue theorem to exchange the limit with the integral). Note that for each $\prec$, the \sq\ $H(\mu_{\prec},\P|\P^{[-j,-1]^{\prec}})$ indexed by $j$ converges nonincreasingly to $H(\mu_{\prec},\P|\P_{\prec}^-)$ (we use continuity of entropy with respect to a refining \sq\ of partitions, see e.g. \cite[Lemma 1.7.11]{D}). By monotonicity, the \sq\ of the arithmetic averages appearing in the last integral has the same limit. We have proved that
	\begin{equation}\label{submain}
		\bar h(\mu,\P,\varphi)=\int H(\mu_{\prec},\P|\P_{\prec}^-)\,\mathrm d\nu.
	\end{equation}
	The proof of the dual formula
	\begin{equation}
		\bar h(\mu,\P,\varphi)=\int H(\mu_{\prec},\P|\P_{\prec}^+)\,\mathrm d\nu
	\end{equation}
	is identical.
	\smallskip
	
	To complete the proof we need to show that $\bar h(\mu,\P,\varphi)=h(\mu,G,\P|\Sigma_{\tilde\O})$.
	We will prove the two respective inequalities separately.
	
	Fix a finite set $K\subset G$ and some $\eps>0$. By the F\o lner property of the multiorder (Theorem \ref{TM}), there exists a set $\tilde\O'\subset\tilde\O$ with $\nu(\tilde\O')>1-\eps$, and $n_0\in\N$ such that, for any $\prec\ \in\tilde\O'$ and $n\ge n_0$, the order-interval $[0,n]^{\prec}$ is $(K,\frac\eps{|K|})$-\inv. Then the $K$-core $[0,n]^{\prec}_K$ of such an interval occupies the fraction of at least $1-\eps$ in that interval. Hence, for any $\prec\ \in\tilde\O'$, we have:
	\begin{equation}\label{tru}
		H(\mu_{\prec},\P^{[0,n]^{\prec}})\le H(\mu_{\prec},\P^{[0,n]^{\prec}_K})+\eps(n+1)\log|\P|.
	\end{equation}
	Observe that, for any $\prec\ \in\tilde\O$, an element $h\in [0,n]^{\prec}_K$ belongs to $K^{-1}g$ for some $g\in [0,n]^{\prec}$ if and only if $g\in Kh$. Since $Kh\subset [0,n]^{\prec}$, there are exactly $|K|$ such elements $g$. This means that the family $\{K^{-1}g:g\in [0,n]^{\prec}\}$ is a so-called $|K|$-cover of the core $[0,n]^{\prec}_K$. 
	Thus, we can apply the Shearer's inequality (see e.g.\ \cite[Section~2]{DF}) and obtain the following:
	\begin{equation}\label{truq}
		H(\mu_{\prec},\P^{[0,n]^{\prec}_{K}})\le \frac1{|K|}\sum_{g\in [0,n]^{\prec}}H(\mu_{\prec},\P^{K^{-1}g}).
	\end{equation}
	By integrating, we get
	\begin{multline*}
		\int H(\mu_{\prec},\P^{[0,n]^{\prec}}) \,\mathrm d\nu=
		\int_{\tilde\O'} H(\mu_{\prec},\P^{[0,n]^{\prec}}) \,\mathrm d\nu+ \int_{\tilde\O\setminus\tilde\O'} H(\mu_{\prec},\P^{[0,n]^{\prec}}) \,\mathrm d\nu\le\\
		\int_{\tilde\O'} H(\mu_{\prec},\P^{[0,n]^{\prec}}) \,\mathrm d\nu+ \eps(n+1)\log|\P|\overset{\eqref{tru}}\le\\
		\int_{\tilde\O'} H(\mu_{\prec},\P^{[0,n]^{\prec}_K}) \,\mathrm d\nu+ 2\eps(n+1)\log|\P|\le\\
		\int H(\mu_{\prec},\P^{[0,n]^{\prec}_K}) \,\mathrm d\nu+ 2\eps(n+1)\log|\P|\overset{\eqref{truq}}\le\\
		\frac1{|K|}\int\sum_{g\in [0,n]^{\prec}} H(\mu_{\prec},\P^{K^{-1}g})\,\mathrm d\nu+2\eps(n+1)\log|\P|\overset{\eqref{eqv}}=\\
		\frac1{|K|}\int\sum_{g\in [0,n]^{\prec}} H(\mu_{g(\prec)},g(\P^{K^{-1}g}))\,\mathrm d\nu+2\eps(n+1)\log|\P|=\cdots.
	\end{multline*}
	
	By \eqref{prze}, we have $g(\P^{K^{-1}g}) = \P^{K^{-1}}$. We can also write $g\in[0,n]^\prec$ as $i^\prec$ with $i\in[0,n]$. Thus, we can continue as follows
	\begin{equation*}
		\dots=\frac1{|K|}\sum_{i\in [0,n]}\int H(\mu_{i^{\prec}(\prec)},\P^{K^{-1}})\,\mathrm d\nu+2\eps(n+1)\log|\P|.
	\end{equation*}
	By Lemma \ref{trud}, for every $i\in[0,n]$, we have 
	\begin{equation*}
		\int H(\mu_{i^{\prec}(\prec)},\P^{K^{-1}})\,\mathrm d\nu=\int H(\mu_{\prec},\P^{K^{-1}})\,\mathrm d\nu.
	\end{equation*}
	Because this term does not depend on $i$, the sumation over $i\in[0,n]$ becomes multiplication by $(n+1)$. We can now divide both sides by $(n+1)$ and finish our inequality:
	\begin{multline*}
		\frac1{n+1}\int H(\mu_{\prec},\P^{[0,n]^{\prec}})\,\mathrm d\nu\le\\
		\frac1{n+1}\frac1{|K|}(n+1)\int H(\mu_{\prec},\P^{K^{-1}})\,\mathrm d\nu+2\eps\log|\P|=\\
		\frac1{|K|}H(\mu,\P^{K^{-1}}|\Sigma_{\tilde\O})+2\eps\log|\P|,
	\end{multline*}
	where the last equality is just the standard formula for the conditional entropy (given a sub-sigma-algebra) via disintegration of the measure (see e.g.\ \cite[formula~(1.5.4)]{D}).
	
	Passing to the limit over $n$ on the left hand side and then passing to the limit over a F\o lner \sq\ $(K_m)_{m\in\N}$ (in place of $K^{-1}$) together with $\eps_m\to 0$ (in place of $\eps$) on the right hand side, we conclude that
	\begin{equation*}
		\bar h(\mu,\P,\varphi)\le h(\mu,G,\P|\Sigma_{\tilde\O}).
	\end{equation*}
	\smallskip
	
	We proceed to proving the converse inequality. Fix some $n\in \N$ and $\eps>0$. For any $\prec\ \in\tilde\O$, the family  $\{[g,g+n]^{\prec}:g\in [F_m-n,F_m]^\prec\}$ is easily seen to be an $(n\!+\!1)$-cover of $F_m$, hence the Shearer's inequality can be used again, as follows:
	\begin{equation*}
		H(\mu_{\prec},\P^{F_m})\le\frac1{n+1}\sum_{g\in [F_m-n,F_m]^\prec}H(\mu_{\prec},\P^{[g,g+n]^{\prec}}).
	\end{equation*}
	By Lemma \ref{nono}, for large enough $m$ and any $\prec\ \in\tilde\O'_m$, where $\tilde\O'_m\subset\tilde\O$ satisfies $\nu(\tilde\O'_m)>1-\eps$, we have
	\begin{equation*}
		|[F_m-n,F_m]^\prec\setminus F_m|\le\eps|F_m|,
	\end{equation*}
	and thus 
	\begin{equation*}
		H(\mu_{\prec},\P^{F_m})\le\frac1{n+1}\sum_{g\in F_m}H(\mu_{\prec},\P^{[g,g+n]^{\prec}})+\eps|F_m|\log|\P|.
	\end{equation*}
	By \eqref{eqv} and \eqref{prze}, we have $H(\mu_{\prec},\P^{[g,g+n\,]^{\prec}})=H(\mu_{g(\prec)},\P^{[g,g+n\,]^{\prec}\cdot g^{-1}})$ for any $g\in G$. By~\eqref{16} and~\eqref{17}, we also have $[g,g+n]^{\prec}\cdot g^{-1}=[0,n]^{g(\prec)}$. We obtain~that
	\begin{equation}\label{om1}
		H(\mu_{\prec},\P^{F_m})\le\frac1{n+1}\sum_{g\in F_m}H(\mu_{g(\prec)},\P^{[0,n]^{g(\prec)}})+\eps|F_m|\log|\P|.
	\end{equation}
	
	We now integrate with respect to $\nu$ and get
	\begin{multline*}\label{om}
		H(\mu,\P^{F_m}|\Sigma_{\tilde\O})=
		\int H(\mu_\prec,\P^{F_m})\,\mathrm d\nu=\\
		\int_{\tilde\O'_m} H(\mu_\prec,\P^{F_m})\,\mathrm d\nu
		+\int_{\tilde\O\setminus\tilde\O'_m} H(\mu_\prec,\P^{F_m})\,\mathrm d\nu\overset{\eqref{om1}}\le\\
		\frac1{n+1}\sum_{g\in F_m}\int H(\mu_{g(\prec)},\P^{[0,n]^{g(\prec)}})\,\mathrm d\nu+\eps|F_m|\log|\P|+\eps|F_m|\log|\P|\overset{\text{inv\!.\,of\,}\nu}\le\\
		\frac1{n+1}\sum_{g\in F_m}\int H(\mu_{\prec},\P^{[0,n]^{\prec}})\,\mathrm d\nu+2\eps|F_m|\log|\P|=\\
		\frac{|F_m|}{n+1}\int H(\mu_{\prec},\P^{[0,n]^{\prec}})\,\mathrm d\nu+2\eps|F_m|\log|\P|,
	\end{multline*}
	where the last equality holds because the summation of terms not depending on $g$ becomes multiplication by $|F_m|$. 
	
	We can now divide all terms by $|F_m|$ and pass to the limit over $m$. Then, on the extreme left, we obtain $h(\mu,G,\P|\Sigma_{\tilde\O})$. After dividing by $|F_m|$, the right hand side no longer depends on $m$, so we have proved that
	\begin{equation*}
		h(\mu,G,\P|\Sigma_{\tilde\O})\le \frac1{n+1}\int H(\mu_{\prec},\P^{[0,n]^{\prec}})\,\mathrm d\nu+2\eps\log|\P|.
	\end{equation*}
	By passing to the limit as $n\to\infty$ we can replace the right hand side by\break${\bar h(\mu,\P,\varphi)+2\eps\log|\P|}$. Since $\eps$ is arbitrary, the last term  can be skipped and we obtain $h(\mu,G,\P|\Sigma_{\tilde\O})\le\bar{h}(\mu,\P,\varphi)$. This ends the proof.
\end{proof}
\begin{rem}
	As pointed out by Alexandre Danilenko \cite{D0}, Theorem~\ref{newentropy} can be alternatively proved basing on a more general approach developed in \cite{D1}.
\end{rem}
\begin{cor}\label{produkt}
	Given a $G$-action $(X,\mu,G)$ and an arbitrary multiorder $(\tilde\O,\nu,G)$, consider the product system $(X\times\tilde\O, \mu\times\nu, G)$ with the product action  $g(x,\prec)=(g(x),g(\prec))$. This product is a multiordered system with the projection on the second coordinate in the role of the factor map $\varphi$.
	In this case, the disintegration of $\mu\times\nu$ with respect to $\nu$ is constant, i.e.\
	$\mu_\prec\, = \mu$ for all $\prec\ \in\tilde\O$. Moreover, by independence, for any partition $\P$ of $X$ (which by lifting can be considered a partition of $X\times\tilde{\O}$) we have $h(\mu\times\nu,G,\P|\Sigma_{\tilde\O})=h(\mu,G,\P)$. Hence, the formula in Theorem \ref{newentropy} takes on the form
	\begin{equation*}
		h(\mu,G,\P)= \int H(\mu,\P|\P_{\prec}^-)\,\mathrm d\nu= \int H(\mu,\P|\P_{\prec}^+)\,\mathrm d\nu,
	\end{equation*}
	i.e.\ we recover the formula from \cite{AMR} (see Remark \ref{amr}).
\end{cor}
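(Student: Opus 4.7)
The plan is straightforward: the corollary follows by verifying three bookkeeping claims and then specializing Theorem \ref{newentropy}. No genuine obstacle is expected; everything is driven by the product structure.

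First I would check that $(X\times\tilde\O,\mu\times\nu,G)$ is multiordered with $\varphi=\pi_2$ the projection onto $\tilde\O$: measurability is clear, the product action $g(x,\prec)=(g(x),g(\prec))$ satisfies $\pi_2\circ g=g\circ\pi_2$, and $\pi_2$ pushes $\mu\times\nu$ to $\nu$. Next I would identify the disintegration: since $(\mu\times\nu)(A\times B)=\int_B\mu(A)\,\mathrm d\nu$ for every Borel $A\subset X$ and $B\subset\tilde\O$, uniqueness of disintegration forces $(\mu\times\nu)_\prec\,=\mu\otimes\delta_\prec$, which under the canonical identification of $X\times\{\prec\}$ with $X$ equals $\mu$. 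Consequently, in the formula of Theorem \ref{newentropy} the disintegrated measure $\mu_\prec$ may be replaced by $\mu$ throughout, independently of $\prec$.

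The only step that genuinely uses independence is the identity $h(\mu\times\nu,G,\P|\Sigma_{\tilde\O})=h(\mu,G,\P)$. A partition $\P$ of $X$, lifted to $X\times\tilde\O$, has atoms of the form $P\times\tilde\O$, and this form is preserved by each $G$-iterate, so by abuse of notation $\P^F$ denotes both the partition of $X$ and its lift. Using the standard disintegration formula for conditional entropy,
$$H(\mu\times\nu,\P^F|\Sigma_{\tilde\O})=\int H\bigl((\mu\times\nu)_\prec,\P^F\bigr)\,\mathrm d\nu=\int H(\mu,\P^F)\,\mathrm d\nu=H(\mu,\P^F),$$
and dividing by $|F_n|$ along a F\o lner sequence yields the claim.

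Plugging these three observations into the two integral formulas of Theorem \ref{newentropy} immediately produces the displayed expressions for $h(\mu,G,\P)$, recovering the formula from \cite{AMR}. If any subtlety arises it is purely notational: one must keep track that the lifted partition and its conditional entropy with respect to $\Sigma_{\tilde\O}$ behave exactly like the original partition and its absolute entropy on $X$, which is a direct consequence of independence.
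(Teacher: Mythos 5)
Your proposal is correct and follows exactly the reasoning the paper intends (the corollary is stated without a separate proof precisely because it amounts to these bookkeeping checks): identifying the constant disintegration $\mu_\prec=\mu$, using the disintegration formula for conditional entropy to get $h(\mu\times\nu,G,\P|\Sigma_{\tilde\O})=h(\mu,G,\P)$, and substituting into Theorem \ref{newentropy}. Nothing is missing.
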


\begin{cor}
	In case of a multiordered system $(X,\mu,G,\varphi)$ such that the associated multiorder $(\tilde\O,\nu,G)$ has entropy zero, we have $h(\mu,G,\P|\Sigma_{\tilde\O})=h(\mu,G,\P)$, and thus Theorem \ref{newentropy} provides a formula for the unconditional entropy:
	\begin{equation*}
		h(\mu,G,\P) = \int H(\mu_\prec,\P|\P_{\prec}^-)\,\mathrm d\nu= \int H(\mu_\prec,\P|\P_{\prec}^+)\,\mathrm d\nu.
	\end{equation*}
\end{cor}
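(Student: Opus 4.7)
Proof proposal. By Theorem \ref{newentropy}, the conclusion reduces to verifying the identity
\begin{equation*}
h(\mu, G, \P \,|\, \Sigma_{\tilde\O}) = h(\mu, G, \P)
\end{equation*}
for every finite measurable partition $\P$ of $X$, under the zero-entropy hypothesis $h(\nu, G) = 0$. The inequality $\leq$ is automatic (conditioning cannot increase entropy), so the substance of the proof lies in the reverse inequality. My plan is to approximate $\Sigma_{\tilde\O}$ from within by single-partition $G$-invariant sub-sigma-algebras and exploit the fact that zero factor entropy annihilates the ``cost'' of adjoining the approximating partition.

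Fix any finite measurable partition $\Q$ of $\tilde\O$ and, via $\varphi$, lift it to a partition (still denoted $\Q$) of $X$ measurable with respect to $\Sigma_{\tilde\O}$. Since $\Q$ factors through $\varphi$, one has $h(\mu, G, \Q) = h(\nu, G, \Q) \leq h(\nu, G) = 0$. Applying the chain rule $H(\mu, \P^{F_n} \vee \Q^{F_n}) = H(\mu, \Q^{F_n}) + H(\mu, \P^{F_n} \,|\, \Q^{F_n})$ together with the trivial bound $H(\mu, \P^{F_n}) \leq H(\mu, \P^{F_n} \vee \Q^{F_n})$ along a F\o lner sequence $(F_n)_{n\in\N}$, then dividing by $|F_n|$ and passing to the limit, I obtain
\begin{equation*}
h(\mu, G, \P) \;\leq\; h(\mu, G, \Q) + h(\mu, G, \P \,|\, \sigma(\Q)_G) \;=\; h(\mu, G, \P \,|\, \sigma(\Q)_G),
\end{equation*}
where $\sigma(\Q)_G := \bigvee_{g \in G} g^{-1}\sigma(\Q)$ is the $G$-invariant sub-sigma-algebra of $\Sigma_{\tilde\O}$ generated by $\Q$.

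To conclude, I would choose an increasing sequence of finite partitions $\Q_k$ of $\tilde\O$ such that $\bigvee_k \sigma(\Q_k)_G = \Sigma_{\tilde\O}$ modulo $\nu$-null sets. By monotonicity of conditional dynamical entropy in the conditioning sigma-algebra (a larger sigma-algebra yields a smaller entropy), the quantities $h(\mu, G, \P \,|\, \sigma(\Q_k)_G)$ decrease to $h(\mu, G, \P \,|\, \Sigma_{\tilde\O})$, and combining this with the uniform bound obtained above gives $h(\mu, G, \P) \leq h(\mu, G, \P \,|\, \Sigma_{\tilde\O})$ and hence equality. The formula of Theorem \ref{newentropy} then yields the desired expression for the unconditional entropy.

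The main technical ingredients expected to cause difficulty are the identity $\lim_n |F_n|^{-1} H(\mu, \P^{F_n} \,|\, \Q^{F_n}) = h(\mu, G, \P \,|\, \sigma(\Q)_G)$, which is the amenable-group analogue of the relation $h(\mu, T, \P \vee \Q) = h(\mu, T, \Q) + h(\mu, T, \P \,|\, \sigma(\Q)_T)$ familiar from $\Z$-actions, together with the continuity of conditional dynamical entropy along monotone increasing limits of sub-sigma-algebras. Both are standard in the amenable setting, but they require some nontrivial entropy-theoretic input beyond the formal manipulations above.
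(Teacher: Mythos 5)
Your proposal is correct: the paper gives no proof of this corollary, treating the identity $h(\mu,G,\P|\Sigma_{\tilde\O})=h(\mu,G,\P)$ as the standard fact that conditioning on a zero-entropy invariant factor does not change the entropy of a process, and your argument is exactly the standard derivation of that fact. The two ingredients you flag --- the amenable Abramov--Rokhlin addition formula $h(\mu,G,\P\vee\Q)=h(\mu,G,\Q)+h(\mu,G,\P|\sigma(\Q)_G)$ and the convergence $h(\mu,G,\P|\sigma(\Q_k)_G)\downarrow h(\mu,G,\P|\Sigma_{\tilde\O})$ (obtained from martingale convergence of $H(\mu,\P^{F_n}|\,\cdot\,)$ for each fixed $n$ together with the infimum rule for the strongly subadditive function $F\mapsto H(\mu,\P^{F}|\Theta)$) --- are indeed the only nontrivial inputs, and both are available in the amenable setting (see e.g.\ \cite{GTW} and \cite{DF}).
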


\section{Preservation of conditional entropy under the orbit equivalence determined by a multiorder}\label{siedem}
In this section we continue to study a multiordered dynamical system $(X,\mu,G,\varphi)$. We prove the equality between the conditional (with respect to the multiorder) entropy of the $G$-action and the analogous conditional entropy of the $\Z$-action given by the iterates of the successor maps $S$ and $\tilde{S}$ defined by the formulae \eqref{full} and~\eqref{SforO}. A well oriented reader will note that this equality follows from a theorem of Rudolph and Weiss \cite[Theorem~2.6]{RW} but, on the one hand, our proof is completely different and much shorter (even when counting the necessary background), on the other hand, as we will show in the next section, our theorem implies that of Rudolph--Weiss (with slightly changed assumptions).

\begin{thm}\label{eq_entropies}
	Let $(X,\mu,G,\varphi)$ be a multiordered dynamical system and let $S$ denote the successor map defined by the formula \eqref{full}. Then, for every finite, measurable partition $\P$ of $X$ we have,
	\begin{equation}
		h(\mu,G,\P|\Sigma_{\tilde{\O}})=h(\mu,S,\P|\Sigma_{\tilde{\O}}),
	\end{equation}
	where $h(\mu,G,\P|\Sigma_{\tilde{\O}})$ is the conditional (with respect to $\Sigma_{\tilde\O}$) entropy of the process $(X,\mu,\P,G)$ generated by $\P$ under the action of $G$ and $h(\mu,S,\P|\Sigma_{\tilde{\O}})$ is the analogous conditional entropy of the process $(X,\mu,\P,S)$ generated by $\P$ under the action of $\Z$ given by the iterates of $S$.
\end{thm}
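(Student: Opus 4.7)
The plan is to compute $h(\mu,S,\P|\Sigma_{\tilde\O})$ directly and show that it equals the integral $\int H(\mu_\prec,\P|\P_\prec^-)\,\mathrm d\nu$ already produced by Theorem~\ref{newentropy}. First, I would observe that $\Sigma_{\tilde\O}$ is $S$-invariant: by Theorem~\ref{motooe}, the map $\varphi$ intertwines $S$ with $\tilde S$, so for every Borel $B\subset\tilde\O$ we have $S^{-1}(\varphi^{-1}(B)) = \varphi^{-1}(\tilde S^{-1}(B))\in\Sigma_{\tilde\O}$. Consequently, the classical Kolmogorov--Sinai formula for $\Z$-actions, relativized to an invariant sub-sigma-algebra (proved by the usual chain-rule-plus-Ces\`aro argument using $S$-invariance of both $\mu$ and $\Sigma_{\tilde\O}$), gives
$$h(\mu,S,\P|\Sigma_{\tilde\O}) = H(\mu,\P\,|\,\P_S^-\vee\Sigma_{\tilde\O}),$$
where $\P_S^- := \bigvee_{j\ge 1}S^j(\P)$ is the past of $\P$ under the $\Z$-action by $S$.

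Next, the standard disintegration identity for conditional entropy (the same one invoked at the end of the proof of Theorem~\ref{newentropy}) allows me to write
$$H(\mu,\P\,|\,\P_S^-\vee\Sigma_{\tilde\O}) = \int H(\mu_\prec,\P\,|\,\P_S^-)\,\mathrm d\nu.$$
The crux of the argument is then to identify, modulo $\mu_\prec$-null sets, the partition $\P_S^-$ with the ``multiorder past'' $\P_\prec^-$. This rests on formula~\eqref{TD}: on the fiber $X_\prec := \varphi^{-1}(\{\prec\})$, on which $\mu_\prec$ is concentrated, we have $S^j(x) = j^\prec(x)$ for every $j\in\Z$. Hence for each $j\ge 1$ and $x\in X_\prec$, the atom of $S^j(\P)$ containing $x$ and the atom of $((-j)^\prec)^{-1}(\P)$ containing $x$ are labelled by the same element of $\P$ --- namely, the atom containing $S^{-j}(x) = (-j)^\prec(x)$. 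Taking joins over $j\ge 1$ gives $\P_S^-|_{X_\prec} = \P_\prec^-|_{X_\prec}$, and therefore $H(\mu_\prec,\P|\P_S^-) = H(\mu_\prec,\P|\P_\prec^-)$ for $\nu$-almost every $\prec$.

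Combining these steps with the conclusion of Theorem~\ref{newentropy} yields
$$h(\mu,S,\P|\Sigma_{\tilde\O}) = \int H(\mu_\prec,\P|\P_\prec^-)\,\mathrm d\nu = h(\mu,G,\P|\Sigma_{\tilde\O}),$$
which is the claimed equality. The main obstacle is the fiberwise identification $\P_S^-|_{X_\prec} = \P_\prec^-|_{X_\prec}$. The subtlety is that $S$ does \emph{not} preserve the fibers $X_\prec$ --- it shifts the base point by $\tilde S$ --- so viewed globally the two $\sigma$-algebras are genuinely different objects; the disintegration step is precisely what allows me to examine each fiber in isolation, and on a single fiber formula~\eqref{TD} forces the two to agree. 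Everything else in the argument is standard entropy bookkeeping.
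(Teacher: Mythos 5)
Your proposal is correct and follows essentially the same route as the paper: both arguments hinge on the fiberwise identification $\P_S^-|_{\varphi^{-1}(\prec)}=\P_\prec^-|_{\varphi^{-1}(\prec)}$ obtained from formula \eqref{TD}, combined with the classical conditional Kolmogorov--Sinai formula for $\Z$-actions, the disintegration identity for conditional entropy, and Theorem \ref{newentropy}. The only difference is cosmetic --- you read the chain of equalities starting from $h(\mu,S,\P|\Sigma_{\tilde\O})$ whereas the paper starts from $h(\mu,G,\P|\Sigma_{\tilde\O})$ --- and your explicit remark that $S$ does not preserve the fibers (so the identification only makes sense after disintegrating) is exactly the right point to flag.
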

\begin{proof}
	For every $n\in \Z$, we denote $\P^{(-\infty,-n]^S}=\bigvee_{k\ge n}S^k(\P)$. Firstly, we show that for $\nu$-almost every $\prec\,\in\tilde\O$, we have 
	\begin{equation}\label{Sprec}
		\P^{(-\infty,-n]^S}|_{\varphi^{-1}(\prec)}=\P^{(-\infty,-n]^{\prec}}|_{\varphi^{-1}(\prec)}.
	\end{equation}
	Let $\prec\,\in\tilde{\O}$ and $k\in\Z$ be fixed. By the formula \eqref{TD}, for every measurable set $A\subset X$, we have
	\begin{gather*}
		S^k(A)\cap \varphi^{-1}(\prec)=S^k\bigl(\bigcup_{\prec'\in\tilde{\O}}A\cap \varphi^{-1}(\prec')\bigr)\cap\varphi^{-1}(\prec)=\\ \Bigl(\bigcup_{\prec'\in\tilde{\O}} S^k(A\cap \varphi^{-1}(\prec'))\Bigr)\cap \varphi^{-1}(\prec)=\Bigl(\bigcup_{\prec'\in\tilde{\O}} k^{\prec'}(A\cap \varphi^{-1}(\prec'))\Bigr)\cap \varphi^{-1}(\prec)=\\\Bigl(\bigcup_{\prec'\in\tilde{\O}} \bigl(k^{\prec'}(A)\cap \varphi^{-1}(k^{\prec'}(\prec'))\bigr)\Bigr)\cap \varphi^{-1}(\prec).
	\end{gather*}
	The only $\prec'\in\tilde{\O}$ for which the corresponding item of the above union has nonempty intersection with $\varphi^{-1}(\prec)$ is the one for which $k^{\prec'}(\prec')=\,\prec$. Since $k^{\prec'}(\prec')={\tilde S}^k(\prec')$, we have $k^{\prec'}(\prec')=\,\prec$ if and only if $\prec'=\tilde{S}^{-k}(\prec)=(-k)^{\prec}(\prec)$ (see formula~\eqref{TD} applied to $\tilde{\O}$ as its own extension). Thus, by the first part of formula \eqref{16} (with $g=(-k)^{\prec}$), we obtain 
	\begin{equation*}
		k^{\prec'}=k^{(-k)^{\prec}(\prec)}=(k-k)^{\prec}\cdot \bigl((-k)^{\prec}\bigr)^{-1}=\bigl((-k)^{\prec}\bigr)^{-1}.
	\end{equation*}
	Hence,
	\begin{equation}\label{Ska}
		S^k(A)\cap \varphi^{-1}(\prec)=\bigl((-k)^{\prec}\bigr)^{-1}(A)\cap\varphi^{-1}(\prec).
	\end{equation}
	Therefore, $S^k(\P)|_{\varphi^{-1}(\prec)}=((-k)^{\prec})^{-1}(\P)|_{\varphi^{-1}(\prec)}$ and for every $n\in \Z$,
	\begin{multline*}
		\P^{(-\infty,-n]^S}|_{\varphi^{-1}(\prec)}=\bigvee_{k\ge n}S^k(\P)|_{\varphi^{-1}(\prec)}=\\\bigvee_{k\ge n}\bigl((-k)^{\prec}\bigr)^{-1}(\P)|_{\varphi^{-1}(\prec)}=\P^{(-\infty,-n]^\prec}|_{\varphi^{-1}(\prec)}.
	\end{multline*}
	In particular, $\P^{(-\infty,-1]^S}\bigr|_{\varphi^{-1}(\prec)}=\P_{\prec}^-\bigr|_{\varphi^{-1}(\prec)}$.
	
	Consequently, by Theorem \ref{newentropy} and by the disintegration formula for the conditional entropy (see e.g.\ \cite[page~255]{P}, the conditional version passes by the same proof), we have
	\begin{multline}\label{cor_eq}
		h(\mu,G,\P|\Sigma_{\tilde O})=\int H(\mu_{\prec},\P|\P_{\prec}^-)\,\mathrm d\nu= \int H\bigl(\mu_{\prec},\P|\P^{(-\infty,-1]^S}\bigr)\,\mathrm d\nu=\\H(\mu,\P|\P^{(-\infty,-1]^S}\vee \Sigma_{\tilde\O})=h(\mu,S,\P|\Sigma_{\tilde O}).
	\end{multline}
	This ends the proof.
\end{proof}
\begin{rem}
	As pointed out by Alexandre Danilenko \cite{D0}, Theorem~\ref{eq_entropies} can be alternatively proved basing on a more general approach developed in \cite{D1}.
\end{rem}

\begin{cor}\label{surp}
	Suppose $\varphi:X\to\tilde\O$ is a measure-theoretic factor map from a~measure-preserving $G$-action $(X,\mu,G)$ to a multiorder $(\tilde\O,\nu,G)$ and let $(X,\mu,S)$ be the $\Z$-action orbit equivalent to $(X,\mu,G)$ as described in Theorem~\ref{motooe}. Recall that the theorem establishes also that the $\Z$-action $(X,\mu,S)$ factors to $(\tilde{\O},\nu,\tilde{S})$ via the same map $\varphi$. It is a well-known phenomenon that orbit equivalent systems may have different entropies. However, in the above situation, it follows from Theorem~\ref{eq_entropies} that it is the multiorder factor which is responsible for the entire difference of entropies:
	\begin{equation}\label{diff_ent}
		h(\mu,G)-h(\mu,S)=h(\nu,G)-h(\nu,\tilde{S})
	\end{equation}
	(assuming that $h(\mu,S)<\infty$ and hence also $h(\nu,\tilde{S})<\infty$).  
\end{cor}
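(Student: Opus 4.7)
The plan is to deduce the corollary from Theorem \ref{eq_entropies} by combining it with the standard decomposition of Kolmogorov--Sinai entropy across an invariant factor (the Abramov--Rokhlin formula for amenable group actions). Since $\Sigma_{\tilde\O}$ is a $G$-invariant sub-sigma-algebra arising from the factor map $\varphi$, the formula gives
\begin{equation*}
h(\mu,G) = h(\mu,G|\Sigma_{\tilde\O}) + h(\nu,G).
\end{equation*}
By Theorem \ref{motooe}, $\Sigma_{\tilde\O}$ is also invariant under the successor map $S$, with $\varphi$ being a factor map from $(X,\mu,S)$ to $(\tilde\O,\nu,\tilde S)$, so the analogous identity for the $\Z$-action reads
\begin{equation*}
h(\mu,S) = h(\mu,S|\Sigma_{\tilde\O}) + h(\nu,\tilde S).
\end{equation*}

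Next, I take the supremum over all finite measurable partitions $\P$ of $X$ in the identity $h(\mu,G,\P|\Sigma_{\tilde\O})=h(\mu,S,\P|\Sigma_{\tilde\O})$ supplied by Theorem \ref{eq_entropies}. Because the Kolmogorov--Sinai conditional entropy is defined as the supremum of $h(\mu,\cdot,\P|\Sigma_{\tilde\O})$ over finite partitions, this yields at once
\begin{equation*}
h(\mu,G|\Sigma_{\tilde\O}) = h(\mu,S|\Sigma_{\tilde\O}).
\end{equation*}
Subtracting the two Abramov--Rokhlin decompositions and cancelling the common conditional entropy term produces exactly \eqref{diff_ent}. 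The assumption $h(\mu,S)<\infty$ ensures both $h(\mu,S|\Sigma_{\tilde\O})$ and $h(\nu,\tilde S)$ are finite, so the cancellation is arithmetically unambiguous; observe that $h(\nu,\tilde S)\le h(\mu,S)<\infty$ follows automatically from $\tilde S$ being a factor of $S$, as stated in the corollary.

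All nontrivial content sits inside Theorem \ref{eq_entropies}; the only point requiring care is the invocation of the Abramov--Rokhlin decomposition for general countable amenable group actions, which is the main place where one must cite prior results (it is well established, e.g.\ via the relative variational principle or via direct F\o lner-sequence computations using disintegration of $\mu$ over $\nu$). I expect no other obstacle, so the corollary follows in a few lines once the two displays above are combined.
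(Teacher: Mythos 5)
Your proposal is correct and is exactly the argument the paper intends (the corollary is stated without proof as an immediate consequence of Theorem \ref{eq_entropies}): take the supremum over finite partitions to get $h(\mu,G|\Sigma_{\tilde\O})=h(\mu,S|\Sigma_{\tilde\O})$, then apply the Abramov--Rokhlin addition formula over the factor $\varphi$ for both the $G$-action and the $S$-action and subtract. Your handling of the finiteness caveat is also appropriate.
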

\begin{exam}\label{below}
	In general, there is no inequality between $h(\mu,G)$ and $h(\mu,S)$. Indeed,
	let $(X_1,\mu_1,T_1)$ and $(X_2,\mu_2,T_2)$ be two ergodic $\Z$-actions on atomless probability spaces, such that $h(\mu_1,T_1)>h(\mu_2,T_2)$ (alternatively $h(\mu_1,T_1)<h(\mu_2,T_2)$). The first action will be viewed as a $G$-action $(X_1,\mu_1,G)$, where $G=\Z$. By the theorem of Dye these actions are orbit equivalent. By the comments at the beginning of Section \ref{trzy}, we can assume that $X_1=X_2$, $\mu_1=\mu_2$ and the orbit-equivalence is established by the identity map. By Remark~\ref{pooetomo}, the first action factors onto a multiorder $(\tilde{\O},\nu,G)$ such that the successor map $S$ associated to that multiorder coincides with $T_2$. Then $h(\mu_1,G)>h(\mu_1,S)$ (alternatively $h(\mu_1,G)<h(\mu_1,S)$). 
\end{exam}

\section{Proof of the Rudolph-Weiss Theorem via multiorders}
This section is devoted to showing that our Theorem \ref{eq_entropies} not only follows but also implies the Rudolph--Weiss Theorem. In fact, it implies a slightly less general version, but the loss of generality is marginal in comparison to the gain of simplicity.

We begin by quoting the original theorem and stating our version.
\begin{thm}\label{RWa}\cite[Theorem~2.6]{RW}
	Let $(X,\mu,G)$ and $(X,\mu,\Gamma)$ be free actions of two countable amenable groups $G$ and $\Gamma$, with the same orbits. Let $(Y,\nu,G)$ be a factor of $(X,\mu,G)$ and let $\Sigma_Y$ denote the associated $G$-invariant sub-sigma-algebra on $X$. Assume that the orbit change from the action of $G$ to the action of $\Gamma$ is measurable with respect to $\Sigma_Y$, i.e.\ that for all $\gamma\in \Gamma$ and $g\in G$ the sets $\{x\in X: g(x)=\gamma(x)\}$ belong~to~$\Sigma_Y$. Then $\Sigma_Y$ is $\Gamma$-invariant and
	\begin{equation}
		h(\mu, G\bigl|\Sigma_Y)=h(\mu,\Gamma\bigl|\Sigma_Y).
	\end{equation}
\end{thm}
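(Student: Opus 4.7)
My plan is to derive Theorem \ref{RWa} from Theorem \ref{eq_entropies} by producing a single $\Z$-action $S$ on $X$, arising from a multiorder, that serves as a common orbit-equivalent partner of both the $G$- and the $\Gamma$-action and such that $h(\mu, G|\Sigma_Y) = h(\mu, S|\Sigma_Y) = h(\mu, \Gamma|\Sigma_Y)$. First, the $\Gamma$-invariance of $\Sigma_Y$ is immediate from the decomposition $\gamma^{-1}(A) = \bigsqcup_{g \in G}(X_{g,\gamma}\cap g^{-1}(A))$, where $X_{g,\gamma} = \{x : \gamma(x) = g(x)\} \in \Sigma_Y$ by hypothesis and $g^{-1}(A) \in \Sigma_Y$ by $G$-invariance. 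To then apply Corollary \ref{33} to the factor, I need $(Y, \nu, G)$ to be free; if it is not, I replace $X$ by the joining with a free zero-entropy $G$-action $(X_0, \mu_0, G)$ from Corollary \ref{34}, setting $X' = X \times X_0$ with $G$ acting diagonally and $\Gamma$ via $\gamma(x, x_0) = (\gamma(x), g_\gamma(x)(x_0))$, using the orbit-change cocycle $g_\gamma$. With $\mu' = \mu \times \mu_0$, $Y' = Y \times X_0$, and $\Sigma_{Y'} = \Sigma_Y \vee \Sigma_{X_0}$, a direct check shows the theorem's hypotheses persist on the primed system, $(Y', G)$ is free, and---because $h(\mu_0, G) = 0$---standard joining-entropy identities reduce the original identity to its primed analogue. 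I therefore assume below that $(Y, G)$ is free.

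I now build the common $S$. Apply Corollary \ref{33} to $(Y, \nu, G)$ to obtain a multiorder factor $\varphi_Y : Y \to \tilde\O$, and lift to $\varphi : X \to \tilde\O$; then $\varphi$ is $\Sigma_Y$-measurable. Let $S$ be the successor map from Theorem \ref{motooe}. Since $\varphi$ is $\Sigma_Y$-measurable, so is the cocycle $x \mapsto 1^{\varphi(x)} \in G$, hence $S$ preserves $\Sigma_Y$. As $S$ has the same orbits as $G$ (and therefore as $\Gamma$), Theorem \ref{oetomo} applied to the free $\Gamma$-action paired with this $\Z$-action produces a multiorder $\psi : X \to \tilde\O_\Gamma$ on $\Gamma$ whose associated successor map, by Remark \ref{pooetomo}, equals $S$. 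The crucial check---the place where the theorem's hypothesis enters---is that $\psi$ is $\Sigma_Y$-measurable: $\psi(x)(i)$ is the $\gamma \in \Gamma$ with $\gamma(x) = i^{\varphi(x)}(x)$, which is $\Sigma_Y$-measurable in $x$ because both $x \mapsto i^{\varphi(x)} \in G$ (inherited from $\varphi$) and the orbit change from $G$ to $\Gamma$ (by hypothesis) are $\Sigma_Y$-measurable.

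Finally, Theorem \ref{eq_entropies} applied to $\varphi$ on both $X$ and $Y$ yields $h(\mu, G|\Sigma_{\tilde\O}) = h(\mu, S|\Sigma_{\tilde\O})$ and $h(\nu, G|\Sigma_{\tilde\O}) = h(\nu, S|\Sigma_{\tilde\O})$. Since $\Sigma_{\tilde\O} \subseteq \Sigma_Y$ and both the $G$-action and $S$ preserve both sigma-algebras, the conditional Abramov identity $h(\mu, \cdot | \Sigma_{\tilde\O}) = h(\mu, \cdot | \Sigma_Y) + h(\nu, \cdot | \Sigma_{\tilde\O})$ applies for $\cdot = G$ and $\cdot = S$; subtracting gives $h(\mu, G|\Sigma_Y) = h(\mu, S|\Sigma_Y)$. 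The identical argument with $\Gamma, \psi, \tilde\O_\Gamma$ in place of $G, \varphi, \tilde\O$---legitimate by the $\Sigma_Y$-measurability of $\psi$ and the freeness of $(Y, \Gamma)$ inherited from $(Y, G)$ via shared orbits---yields $h(\mu, \Gamma|\Sigma_Y) = h(\mu, S|\Sigma_Y)$, and combining the two closes the proof. The main obstacle I anticipate is the construction of $\psi$ together with the verification of its $\Sigma_Y$-measurability and the coincidence of its successor map with $S$: this is precisely where the measurability hypothesis of the theorem is indispensable, and constitutes the technical heart of the argument.
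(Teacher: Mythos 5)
First, note that the paper does not actually prove Theorem \ref{RWa}: it quotes it from \cite{RW} and proves only the strictly less general Theorem \ref{RWb}, in which both induced actions on the factor $(Y,\nu)$ are assumed free (see the footnote following the discussion of Lemma \ref{equiv_RW}). Your argument is therefore more ambitious than the paper's. Its architecture in the free-factor case --- a common orbit-equivalent successor map $S$ coming from a multiorder factor of $(Y,\nu,G)$, a companion $\Gamma$-multiorder supplied by Theorem \ref{oetomo} with the same successor map, Theorem \ref{eq_entropies} applied upstairs and downstairs, and the conditional Abramov formula --- matches the paper's proof of Theorem \ref{RWb} in all essentials, and your identification of where the measurability hypothesis enters (the $\Sigma_Y$-measurability of $\psi$) is exactly right. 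The genuinely new ingredient is the reduction of the non-free-factor case by joining with a free zero-entropy $G$-action $(X_0,\mu_0,G)$ and extending $\Gamma$ through the orbit-change cocycle. The verifications you wave at do go through: the skew $\Gamma$-action lies in the full group of the diagonal $G$-action and hence preserves $\mu\times\mu_0$; the orbit-change sets for the primed pair are $\{x:\gamma(x)=g(x)\}\times X_0$, so the hypothesis persists; and the conditional entropies are unchanged because $\Sigma_X$ and $\Sigma_{X_0}$ are independent over $\Sigma_Y$ while the $\P_0$-processes of both primed actions land inside $\Sigma_Y\vee\Sigma_{X_0}$. In fact, after this reduction you could simply invoke Theorem \ref{RWb} and stop.

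The genuine gap is the final subtraction. You take an arbitrary multiorder factor of $(Y,\nu,G)$ from Corollary \ref{33}, so the induced successor map on $Y$ has uncontrolled entropy, and the quantity $h(\nu,G|\Sigma_{\tilde\O})=h(\nu,S|\Sigma_{\tilde\O})$ that you propose to cancel may be infinite (already when $h(\nu,G)=\infty$, which Theorem \ref{RWa} does not exclude). In that case the identity $h(\mu,G|\Sigma_{\tilde\O})=h(\mu,G|\Sigma_Y)+h(\nu,G|\Sigma_{\tilde\O})$ reads $\infty=h(\mu,G|\Sigma_Y)+\infty$ and yields no information about $h(\mu,G|\Sigma_Y)$; ``subtracting'' the two instances of the Abramov identity is then illegitimate. (The authors are alert to exactly this issue: see the finiteness caveat in Corollary \ref{surp}.) The paper's proof of Theorem \ref{RWb} is engineered to avoid it: one first reduces to ergodic $\nu$, then uses the Ornstein--Weiss and Dye theorems to choose a \emph{zero-entropy} $\Z$-action $(Y,\nu,T)$ with the same orbits as $(Y,\nu,G)$, and builds the multiorder from $T$ via Theorem \ref{oetomo}, so that the terms to be subtracted are $0$ rather than a possibly infinite common value. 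You should import this device: after your reduction to a free factor, pass to ergodic components, take $T$ of entropy zero on $Y$, and let $\varphi_Y$ be the multiorder factor it induces; the remainder of your argument then closes without change.
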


\begin{thm}\label{RWb}
	Let $(X,\mu,G)$ and $(X,\mu,\Gamma)$ be actions of two countable amenable groups $G$ and $\Gamma$, with the same orbits. Let $\Sigma_Y$ be a sub-sigma-algebra on $X$ invariant under both actions, and such that both these actions on the corresponding factor space $(Y,\nu)$ are free. Then
	\begin{equation}\label{eq_entropies_RW}
		h(\mu,G\bigl|\Sigma_Y)=h(\mu,\Gamma|\Sigma_Y).
	\end{equation}
\end{thm}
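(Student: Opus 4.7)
The plan is to reduce the desired equality to a single application of Theorem~\ref{eq_entropies}, by engineering one and the same $\Z$-action $S:X\to X$ that plays the role of the successor map of a multiorder factor simultaneously for $(X,\mu,G)$ and for $(X,\mu,\Gamma)$. Once such a common $S$ is in hand, the relative entropy formula of Theorem~\ref{eq_entropies} combined with the (relative) Abramov--Rokhlin chain rule for amenable group actions will collapse both conditional entropies onto the single quantity $h(\mu,S\,|\,\Sigma_Y)$.

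First I would note that freeness of each action on $Y$ automatically forces the corresponding action on $X$ to be free: if $g(x)=x$ then $g(\pi(x))=\pi(x)$, so $g=e$ by freeness on $Y$. Since the $G$- and $\Gamma$-orbits on $X$ coincide and $\pi$ is equivariant for both actions, the two actions have the same orbits on $Y$ as well. By Ornstein--Weiss~\cite{OW}, choose a measure-automorphism $T:Y\to Y$ whose $\Z$-orbits are these common orbits. Now Theorem~\ref{oetomo} applied to $(Y,\nu,G)$ and $T$ produces a factor map $\psi_G:Y\to\tilde\O_G$ onto a multiorder whose associated successor map is precisely $T$ (cf.\ Remark~\ref{pooetomo}); analogously one obtains $\psi_\Gamma:Y\to\tilde\O_\Gamma$ with successor $T$. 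Composing with $\pi$ gives factor maps $\varphi_G,\varphi_\Gamma:X\to\tilde\O_G,\tilde\O_\Gamma$ turning $X$ into multiordered systems for $G$ and for $\Gamma$, whose successor maps $S_G,S_\Gamma:X\to X$ (produced by Theorem~\ref{motooe}) each lift $T$.

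The crucial step is the identification $S_G=S_\Gamma$ almost surely. By construction $S_G(x)=g_{\pi(x)}(x)$ and $S_\Gamma(x)=\gamma_{\pi(x)}(x)$, where $g_y=1^{\psi_G(y)}\in G$ and $\gamma_y=1^{\psi_\Gamma(y)}\in\Gamma$ are the unique elements with $g_y(y)=T(y)=\gamma_y(y)$; both points lie in the common orbit $G\cdot x=\Gamma\cdot x$ and project under $\pi$ to $T(\pi(x))$. Because the $G$-action is free on both $X$ and $Y$, the restriction of $\pi$ to the $G$-orbit of $x$ is a $G$-equivariant bijection onto the $G$-orbit of $\pi(x)$, so there is only one point in $G\cdot x$ projecting to $T(\pi(x))$, and therefore $S_G(x)=S_\Gamma(x)=:S(x)$. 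Theorem~\ref{eq_entropies} then yields
\begin{equation*}
h(\mu,G\,|\,\Sigma_{\tilde\O_G})=h(\mu,S\,|\,\Sigma_{\tilde\O_G}),\qquad h(\mu,\Gamma\,|\,\Sigma_{\tilde\O_\Gamma})=h(\mu,S\,|\,\Sigma_{\tilde\O_\Gamma}),
\end{equation*}
and the same theorem applied on $Y$ (with multiorders $\psi_G$, $\psi_\Gamma$ and common successor $T$) gives the analogous equalities for $\nu$. Since $\Sigma_{\tilde\O_G}\subseteq\Sigma_Y$, and every action involved ($G$, $\Gamma$, $S$, $T$) preserves both sigma-algebras, the relative Abramov--Rokhlin formula
\begin{equation*}
h(\mu,G\,|\,\Sigma_{\tilde\O_G})=h(\mu,G\,|\,\Sigma_Y)+h(\nu,G\,|\,\Sigma_{\tilde\O_G})
\end{equation*}
together with its counterpart for $S$ allows the $\nu$-terms to cancel, giving $h(\mu,G\,|\,\Sigma_Y)=h(\mu,S\,|\,\Sigma_Y)$. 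Repeating the argument with $\Sigma_{\tilde\O_\Gamma}$ in place of $\Sigma_{\tilde\O_G}$ produces $h(\mu,\Gamma\,|\,\Sigma_Y)=h(\mu,S\,|\,\Sigma_Y)$, and comparing the two finishes the proof. The main obstacle I foresee is the verification $S_G=S_\Gamma$: it is precisely here that the hypotheses (identical orbits on $X$, freeness on $Y$) must be used essentially, since without them the two candidate successors would merely project to the same map on $Y$ while possibly disagreeing on fibers of~$\pi$.
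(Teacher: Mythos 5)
Your proposal follows essentially the same route as the paper: build a single $\Z$-action $S$ on $X$ that is simultaneously the successor map of a multiorder factor of $(X,\mu,G)$ and of $(X,\mu,\Gamma)$ (both multiorders living on $Y$ and sharing the successor map $T$ there), identify $S_G=S_\Gamma$ using freeness on $Y$, and then combine Theorem~\ref{eq_entropies}, applied to the four multiordered systems, with the chain rule for conditional entropy. Your verification that $S_G=S_\Gamma$ via injectivity of $\pi$ restricted to orbits is exactly the content of the paper's Lemma~\ref{ext_oe}, and your preliminary observations (freeness on $Y$ implies freeness on $X$; the two actions share orbits on $Y$) are correct.

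There is, however, one genuine gap, located in the final cancellation. You arrive at
\begin{equation*}
h(\mu,G\,|\,\Sigma_Y)+h(\nu,G\,|\,\Sigma_{\tilde{\O}_G})=h(\mu,S\,|\,\Sigma_Y)+h(\nu,T\,|\,\Sigma_{\tilde{\O}_G})
\end{equation*}
and cancel the two $\nu$-terms, which are equal by Theorem~\ref{eq_entropies}. This is only legitimate when $h(\nu,T\,|\,\Sigma_{\tilde{\O}_G})<\infty$, and nothing in your construction guarantees that: the Ornstein--Weiss theorem produces \emph{some} $\Z$-action $T$ with the prescribed orbits, with no control on its entropy, and $h(\nu,G)$ itself may be infinite (the theorem imposes no finiteness hypothesis). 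If the canceled quantity is $+\infty$, both sides of the display are $+\infty$ and you cannot conclude $h(\mu,G\,|\,\Sigma_Y)=h(\mu,S\,|\,\Sigma_Y)$. The paper sidesteps this by first reducing to ergodic $\nu$ (ergodic decomposition) and then invoking the Dye theorem to choose $T$ with $h(\nu,T)=0$; then $h(\nu,T\,|\,\Sigma_{\tilde{\O}_G})=h(\nu,G\,|\,\Sigma_{\tilde{\O}_G})=0$, the subtraction is harmless, and one even gets the cleaner identity $h(\mu,G\,|\,\Sigma_Y)=h(\mu,\bar T\,|\,\Sigma_{\tilde{\O}_G})=h(\mu,\bar T)$ because $\Sigma_{\tilde{\O}_G}$ becomes a zero-entropy factor of $(X,\mu,\bar T)$. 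Inserting the ergodic reduction and the entropy-zero choice of $T$ into your argument closes the gap and makes it coincide with the paper's proof.
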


We will now discuss the differences between these two formulations. There are two changes in the assumptions. 
\begin{enumerate}[(1)]
	\item In Theorem \ref{RWa} we assume freeness of the actions of $G$ and $\Gamma$ on $(X,\mu)$, while in Theorem \ref{RWb} we assume the same about the actions on the common factor $(Y,\nu)$ (which is clearly a stronger assumption).  
	\item We replace the $\Sigma_Y$-measurability of the change of orbits by the ``double invariance'' of~$\Sigma_Y$ (which is seemingly a weaker assumption). 
\end{enumerate} 
The next lemma shows that although the change (2) seems to be in favor of Theorem~\ref{RWb} (see part a)), in view of the change (1) it is actually not (see part b)). So Theorem \ref{RWb} is slightly, but strictly less general.\footnote{Consider two identical free actions $(X,\mu,G)=(X,\mu,\Gamma)$, where $G=\Gamma$, and the trivial factor $\Sigma_Y=\{X,\varnothing\}$. As easily verified, this example satisfies the assumptions of Theorem \ref{RWa} but not of Theorem~\ref{RWb}.}

\begin{lem}\label{equiv_RW}
	Let $(X,\mu,G)$ and $(X,\mu,\Gamma)$ be two actions of countable amenable groups, which have the same orbits. The following implications are true:
	\begin{enumerate}[a)]
		\item If $(Y,\nu,G)$ is a factor of $(X,\mu,G)$ (i.e.\ $\Sigma_Y$ is a $G$-invariant sub-sigma-algebra on $X$), and the orbit change from the action of $G$ to the action of $\Gamma$ is $\Sigma_Y$-measurable, then $\Sigma_Y$ is $\Gamma$-invariant (i.e.\ $(Y,\nu,\Gamma)$ is a factor of $(X,\mu,\Gamma)$).
		\item If $\Sigma_Y$ is a sub-sigma-algebra on $X$ which is both $G$-\inv\ and $\Gamma$-invariant, and the action of at least one of the groups on the corresponding factor space $(Y,\nu)$ is free, then the orbit change from $(X,\mu,G)$ to $(X,\mu,\Gamma)$ is $\Sigma_Y$-measurable.
	\end{enumerate}
\end{lem}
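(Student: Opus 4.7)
The plan is to work directly with the sets $X_{g,\gamma}=\{x\in X:g(x)=\gamma(x)\}$. The same-orbits hypothesis says that for every fixed $\gamma\in\Gamma$ the countable family $\{X_{g,\gamma}:g\in G\}$ covers $X$ modulo a $\mu$-null set (and symmetrically with the roles of $G$ and $\Gamma$ swapped).

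For part (a), I would fix $\gamma\in\Gamma$ and $A\in\Sigma_Y$ and establish the decomposition
\begin{equation*}
	\gamma^{-1}(A)=\bigcup_{g\in G}\bigl(X_{g,\gamma}\cap g^{-1}(A)\bigr)\pmod{\mu\text{-null}}.
\end{equation*}
The inclusion $\supset$ is trivial, while $\subset$ uses same-orbits to select, for each $x\in\gamma^{-1}(A)$, some $g\in G$ realizing $g(x)=\gamma(x)\in A$. Then $G$-invariance of $\Sigma_Y$ gives $g^{-1}(A)\in\Sigma_Y$, and the hypothesis supplies $X_{g,\gamma}\in\Sigma_Y$; hence the countable union, and thus $\gamma^{-1}(A)$, lies in $\Sigma_Y$. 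Since $\gamma$ and $A$ were arbitrary, $\Sigma_Y$ is $\Gamma$-invariant.

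For part (b), assume without loss of generality that the $G$-action on $(Y,\nu)$ is free (the $\Gamma$-case is symmetric). Double invariance of $\Sigma_Y$ produces induced measure-preserving actions of both $G$ and $\Gamma$ on $Y$; projecting the same-orbits property from $X$ to $Y$ gives $G(y)=\Gamma(y)$ for $\nu$-a.e.\ $y\in Y$. For fixed $g\in G$ and $\gamma\in\Gamma$, introduce the auxiliary set
\begin{equation*}
	\tilde X_{g,\gamma}:=\pi^{-1}\bigl(\{y\in Y:g(y)=\gamma(y)\}\bigr),
\end{equation*}
which lies in $\Sigma_Y$. The inclusion $X_{g,\gamma}\subset\tilde X_{g,\gamma}$ is immediate upon applying $\pi$. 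For the reverse inclusion modulo $\mu$-null, I would restrict to the full-measure set of $x\in X$ whose $G$- and $\Gamma$-orbits on $X$ coincide and whose projection $\pi(x)$ has trivial $G$-stabilizer. For such an $x\in\tilde X_{g,\gamma}$, same-orbits yields some $g'\in G$ with $g'(x)=\gamma(x)$; projecting to $Y$ and comparing with $g(\pi(x))=\gamma(\pi(x))$ gives $g^{-1}g'\in\mathrm{Stab}_G(\pi(x))=\{e\}$, whence $g'=g$ and therefore $g(x)=\gamma(x)$. This shows $X_{g,\gamma}=\tilde X_{g,\gamma}\in\Sigma_Y$ up to a null set, which is the required measurability of the orbit change.

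The only genuinely delicate point is the null-set bookkeeping in part (b): the key equality $X_{g,\gamma}=\tilde X_{g,\gamma}$ holds only $\mu$-a.e.\ and rests entirely on freeness of the action on the factor $(Y,\nu)$. Without such freeness, the element $g'$ realizing $g'(x)=\gamma(x)$ need not coincide with $g$, and the crucial inclusion $\tilde X_{g,\gamma}\subset X_{g,\gamma}$ can fail; this is exactly the reason why part (b) is slightly less general than the measurability hypothesis of Theorem~\ref{RWa}.
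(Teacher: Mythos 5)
Your proof is correct and follows essentially the same route as the paper: part (a) is the identical decomposition $\gamma^{-1}(A)=\bigcup_{g\in G}\bigl(X_{g,\gamma}\cap g^{-1}(A)\bigr)$, and part (b) rests on the same key point, namely that freeness of the $G$-action on $(Y,\nu)$ forces the orbit-change element to be determined by $\pi(x)$. Your packaging of part (b) as the explicit set identity $X_{g,\gamma}=\pi^{-1}\bigl(\{y\in Y: g(y)=\gamma(y)\}\bigr)$ (mod $\mu$-null) is just a slightly more explicit way of stating the paper's conclusion that the orbit change is constant on the fibers of $\pi$.
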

\begin{proof}\hfill
	\begin{enumerate}[a)]
		\item If $(Y,\nu,G)$ is a factor of $(X,\mu,G)$, then for every $A\in\Sigma_Y$ and $\gamma\in\Gamma$ we have
		\begin{equation*}
			\gamma^{-1}(A)=\{x\in X: \gamma(x)\in A\}=\bigcup_{g\in G}\bigl(\{x\in X: \gamma(x)=g(x)\}\cap g^{-1}(A)\bigr).
		\end{equation*}
		Since the orbit change is assumed $\Sigma_Y$-measurable,  $\{x\in X: \gamma(x)=g(x)\}\in\Sigma_Y$. Next, $g^{-1}(A)\in \Sigma_Y$ by $G$-invariance of $\Sigma_Y$. Hence $\gamma^{-1}(A)\in\Sigma_Y$.
		\item Assume that $\Sigma_Y$ is a sub-sigma-algebra on $X$ which is both $G$-\inv\ and $\Gamma$-invariant. Let $\pi:X\to Y$ be the corresponding factor map (note that the factor map does not depend on the choice of the acting group). Assume that one of the groups, say $G$, acts freely on $(Y,\nu)$. Fix some $\gamma\in\Gamma$ and $y\in Y$. Let $x$ and $x'$ both belong to $\pi^{-1}(y)$. Let $g,g'\in G$ be such that $g(x)=\gamma(x)$ and $g'(x')=\gamma(x')$. We need to show that $g=g'$ (this will imply that the orbit change is constant on the fibers of $\pi$, i.e.\ $\Sigma_Y$-measurable). We have
		\begin{multline*}
			\hspace{10pt}g'(y) = g'(\pi(x'))=\pi(g'(x'))=\pi(\gamma(x'))=\gamma(\pi(x'))=\gamma(y)=\\\gamma(\pi(x))=\pi(\gamma(x))=\pi(g(x))=g(\pi(x))=g(y).
		\end{multline*}
		Since the $G$-action on $(Y,\nu)$ is free, we conclude that $g=g'$, as needed.
	\end{enumerate}
\end{proof}	
The following example shows that the assumption of freeness of the factor is essential.
\begin{exam}
	Let $(X,\mu,T)$ be any free $\Z$-action and let $(Y,\nu,\phi)$ be given by $Y=\{0,1\}$, $\nu(\{0\})=\nu(\{1\})=\tfrac12$ and $\phi(y)=1-y$. On $Y\times X$ consider the skew product:
	\begin{equation*}
		T_{\phi}(y,x)=\begin{cases}
			(\phi(y),x)& \text{ for $y=0$}\\
			(\phi(y),T(x))& \text{ for $y=1$}.
		\end{cases}
	\end{equation*}
	On the same product space consider also the $\Z_2\times\Z$ action given by
	\begin{equation*}
		(m,n)(y,x)=(\phi^m(y),T^n(x)),\ m\in\Z_2,\ n\in\Z.
	\end{equation*}
	These actions are free and have the same orbits. On $\{0\}\times X$ we have $T_{\phi}(y,x)=(1,0)(y,x)$, while on $\{1\}\times X$ we have $T_{\phi}(y,x)=(1,1)(y,x)$. The trivial sigma-algebra $\Sigma_{\mathsf{triv}}$ on $Y\times X$ is clearly invariant under both actions, but the sets $\{0\}\times X$ and $\{1\}\times X$ (each of product measure $\tfrac{1}{2}$) are not measurable with respect to $\Sigma_{\mathsf{triv}}$.
\end{exam}
The next observation will be used in the proof of Theorem \ref{RWb}. 
\begin{lem}\label{ext_oe}
	Let $(X,\mu,G)$ be a measure-preserving action of a countable amenable group $G$. Let $(Y,\nu,G)$ be a factor of $(X,\mu,G)$, such that the action of $G$ on $(Y,\nu)$ is free. Let $\pi:X\to Y$ be the corresponding factor map. Let $(Y,\nu,\Gamma)$ be an action of a countable amenable group $\Gamma$ on $(Y,\nu)$, which has the same orbits as $(Y,\nu,G)$. Then the action $(Y,\nu,\Gamma)$ has a unique extension
	$(X,\mu,\Gamma)$ via the map $\pi$ with the same orbits as $(X,\mu,G)$. The extension is defined by the formula
	\begin{equation}\label{eq:1}
		\gamma(x)=g(x),
	\end{equation}
	where $g$ is the unique element of $G$ such that $g(\pi(x))=\gamma(\pi(x))$.
\end{lem}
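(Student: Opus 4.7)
The plan is to first verify that \eqref{eq:1} makes sense, then check that the resulting map is a measure-preserving action of $\Gamma$ with the required properties, and finally establish uniqueness. The two key uses of freeness of the $G$-action on $(Y,\nu)$ will be: (a) to make the element $g$ in \eqref{eq:1} unique, and (b) to force any other candidate extension to agree with the constructed one.

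First I would handle pointwise existence and uniqueness of $g$ in \eqref{eq:1}. For each fixed $\gamma\in\Gamma$ and $\nu$-a.e.\ $y\in Y$, the same-orbits assumption on $Y$ gives some $g\in G$ with $g(y)=\gamma(y)$, and freeness of $G$ on $(Y,\nu)$ guarantees it is unique. The decomposition $Y=\bigsqcup_{g\in G}\{y:g(y)=\gamma(y)\}$ is a countable measurable partition, so $y\mapsto g$ is measurable, and composing with $\pi$ gives a measurable assignment $x\mapsto g$. Hence $x\mapsto\gamma(x)=g(x)$ is measurable; since $\gamma$ acts piecewise as an element of $G$, it lies in the full group of the $G$-action on $(X,\mu)$ and therefore preserves $\mu$.

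Next I would check the group-action axioms by chasing \eqref{eq:1}. For $\gamma_1,\gamma_2\in\Gamma$ and $x\in X$, let $g_2$ be the element realizing $\gamma_2$ at $x$ and let $g_1$ be the element realizing $\gamma_1$ at $g_2(x)$. Equivariance of $\pi$ for the $G$-action gives $g_1g_2(\pi(x))=\gamma_1\gamma_2(\pi(x))$, so by the uniqueness from the previous step, $g_1g_2$ is exactly the element that realizes $\gamma_1\gamma_2$ at $x$; hence $(\gamma_1\gamma_2)(x)=g_1g_2(x)=\gamma_1(\gamma_2(x))$. Similarly, the identity of $\Gamma$ fixes every $\pi(x)$, so by freeness on $(Y,\nu)$ it must be realized by the identity of $G$, which fixes $x$. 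That $\pi$ intertwines the two $\Gamma$-actions follows directly from \eqref{eq:1}: $\pi(\gamma(x))=g(\pi(x))=\gamma(\pi(x))$.

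For the orbit equality, containment of $\Gamma$-orbits in $G$-orbits is built into \eqref{eq:1}. For the reverse inclusion, given $g\in G$, the same-orbits assumption on $Y$ produces some $\gamma\in\Gamma$ with $\gamma(\pi(x))=g(\pi(x))$; freeness of $G$ on $(Y,\nu)$ then makes $g$ the unique element defining $\gamma(x)$, so $\gamma(x)=g(x)$. Uniqueness of the extension is the cleanest step: any $\Gamma$-action $\gamma'$ on $(X,\mu)$ extending the one on $(Y,\nu)$ with the same orbits as $(X,\mu,G)$ satisfies $\gamma'(x)=h(x)$ for some $h\in G$, and applying $\pi$ gives $h(\pi(x))=\gamma(\pi(x))$, whence $h=g$ by freeness on $Y$. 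The only step requiring real care is the measurability of the element-selection $x\mapsto g$; everything else is bookkeeping powered by the freeness assumption on $(Y,\nu)$.
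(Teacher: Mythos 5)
Your proposal is correct and follows essentially the same route as the paper: the paper declares the existence/well-definedness part "obvious" and handles uniqueness by citing Lemma \ref{equiv_RW}(b), whose proof is exactly your push-$h$-through-$\pi$-and-use-freeness argument. You simply spell out the bookkeeping (measurability of the selection, full-group membership, the action axioms) that the paper omits.
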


\begin{proof} Obviously, the formula \eqref{eq:1} defines an action of $\Gamma$ on $(X,\mu)$ which extends $(Y,\nu,\Gamma)$ via $\pi$, and has the same orbits as $(X,\mu,G)$. Now, let $(X,\mu,\Gamma)$ denote any extension of $(Y,\nu,\Gamma)$ via $\pi$, with the same orbits as $(X,\mu,G)$. By Lemma \ref{equiv_RW} (and since $G$ acts freely on $(Y,\nu)$), the orbit change from $(X,\mu,\Gamma)$ to $(X,\mu,G)$ is $\Sigma_Y$-measurable, which means precisely what we desire: for any $\gamma\in\Gamma$ and $x\in X$, we have $\gamma(x)=g(x)$, where $g\in G$ is the unique element such that $\gamma(\pi(x))=g(\pi(x))$.
\end{proof}

\begin{proof}[Proof of Theorem \ref{RWb}]
	Assume that the actions of $G$ and $\Gamma$ on $(Y,\nu)$ are free. If $\nu$~is not ergodic, then its ergodic decomposition does not depend on the action. Since the actions of $G$ and $\Gamma$ are free with respect to $\nu$, they are clearly free with respect to almost every ergodic component of $\nu$. Thus, it suffices to consider $\nu$ ergodic. 
	
	By the theorem of Ornstein and Weiss (\cite[Theorem~6]{OW}) and the Dye Theorem~\cite{Dy}, there exists a $\Z$-action $(Y,\nu,T)$ of entropy zero, with the same orbits as $(Y,\nu,G)$ (and thus as $(Y,\nu,\Gamma)$). 
	
	Since the action $(Y,\nu,G)$ is free, by Remark \ref{pooetomo}, there exists a multiorder $(\tilde{\O}_G,\nu_G,G)$ being a factor of $(Y,\nu,G)$ via a factor-map $\varphi_G$, such that $T$ coincides with the successor map $S_G$ associated with the multiorder $\tilde{\O}_G$. Similarily, there exists a multiorder $(\tilde{\O}_{\Gamma},\nu_\Gamma,\Gamma)$ being a factor of $(Y,\nu,\Gamma)$ via a factor-map $\varphi_{\Gamma}$, such that $T$ coincides with the successor map $S_{\Gamma}$ associated with the multiorder $\tilde{\O}_{\Gamma}$. These two multiorders are factors of $(X,\mu,G)$ and $(X,\mu,\Gamma)$ via $\varphi_G\circ\pi$ and $\varphi_\Gamma\circ\pi$, respectively, and they induce on $(X,\mu)$ two (\textit{a priori} different) successor maps, which we will denote by $\bar S_G$ and $\bar S_\Gamma$. The corresponding $\Z$-actions on $(X,\mu)$ have the same orbits as $(X,\mu,G)$ and $(X,\mu,\Gamma)$. By Lemma \ref{ext_oe}, (applied to $(Y,\nu,T)$ rather than $(Y,\nu,\Gamma)$) we know that there exists a unique extension $\bar T$ of $T$ onto $(X,\mu)$ which has the same orbits as $(X,\mu,G)$ and $(X,\mu,\Gamma)$. In order to show that $\bar S_G=\bar S_\Gamma$ it suffices to show that they both equal $\bar T$ (we will do that only for $\bar S_G$, the proof for $\bar S_\Gamma$ is analogous).
	
	Recall that, by definition, $\bar{S}_G(x)=1^{\prec}(x)$, where $\prec\,=(\varphi_G\circ\pi)(x)$. Letting $y=\pi(x)$ we can write $\prec\,=\varphi_G(y)$. On the other hand, by \eqref{eq:1} applied to $1\in\Z$ (in the role of $\gamma$) we get $\bar T(x)=g(x)$, where $g\in G$ is unique such that $g(y)=T(y)$. Since $T=S_G$, we have $g(y)=S_G(y)=1^{\prec}(y)$. Since the action of $G$ on $(Y,\nu)$ is free, we have $g=1^{\prec}$, hence $\bar T(x)=1^{\prec}(x)=\bar{S}_G(x)$. As explained earlier, we have just proved that $\bar S_G=\bar S_\Gamma$.
	
	We are in the position to use our Theorem \ref{eq_entropies} with respect to the four multiordered systems $(Y,\nu,G,\varphi_G)$, $(Y,\nu,\Gamma,\varphi_{\Gamma})$, $(X,\mu,G,\varphi_G\circ\pi)$ and $(X,\mu,\Gamma,\varphi_{\Gamma}\circ\pi)$.
	We obtain the following equalities
	\begin{gather*}
		h(\nu,G\bigl|\Sigma_{\tilde{\O}_G})=h(\nu,T\bigl|\Sigma_{\tilde{\O}_G})=0,\\
		h(\nu,\Gamma\bigl|\Sigma_{\tilde{\O}_{\Gamma}})=h(\nu,T\bigl|\Sigma_{\tilde{\O}_{\Gamma}})=0,\\
		h(\mu,G|\Sigma_{\tilde{\O}_G})=h(\mu,\bar T\bigl|\Sigma_{\tilde{O}_G}),\\
		h(\mu,\Gamma|\Sigma_{\tilde{\O}_{\Gamma}})=h(\mu,\bar T\bigl|\Sigma_{\tilde{O}_{\Gamma}}).
	\end{gather*}
	Hence we also have 
	\begin{gather*}
		h(\mu,G\bigl|\Sigma_Y)=h(\mu,G|\Sigma_{\tilde{\O}_G})-h(\nu,G|\Sigma_{\tilde{\O}_G})=h(\mu,G|\Sigma_{\tilde{\O}_G})-0=h(\mu,\bar T\bigl|\Sigma_{\tilde{O}_G}),\\
		h(\mu,\Gamma\bigl|\Sigma_Y)=h(\mu,\Gamma|\Sigma_{\tilde{\O}_{\Gamma}})-h(\nu,\Gamma|\Sigma_{\tilde{\O}_{\Gamma}})=h(\mu,\Gamma|\Sigma_{\tilde{\O}_{\Gamma}})-0=h(\mu,\bar T\bigl|\Sigma_{\tilde{O}_{\Gamma}}).
	\end{gather*}
	Since $\Sigma_{\tilde{\O}_G}$ and $\Sigma_{\tilde{\O}_\Gamma}$ are $T$-invariant sub-sigma-algebras on $Y$, the action of $T$ on the corresponding factors have entropy zero. Thus they are also zero-entropy factors of $(X,\mu,\bar T)$, which implies that
	\begin{equation*}
		h(\mu,\bar T\bigl|\Sigma_{\tilde{O}_G})=h(\mu,\bar T\bigl|\Sigma_{\tilde{O}_{\Gamma}})=h(\mu,\bar T).
	\end{equation*}
	Therefore, we get the desired equality
	\begin{equation*}
		h(\mu,G\bigl|\Sigma_Y)=h(\mu,\Gamma\bigl|\Sigma_Y) \ \ (=h(\mu,\bar T)),
	\end{equation*}
	and the proof is complete.
\end{proof}
\begin{rem}
	Alexandre Danilenko \cite{D0} indicates that both the Rudolph-Weiss Theorem and our Theorem~\ref{RWb} are covered by \cite[Theorem~0.3]{D1}.
\end{rem}
\section{Pinsker sigma-algebra via multiorder}
As an application of Theorem \ref{eq_entropies} we provide a characterization of the Pinsker sigma-algebra relative to the multiorder $\tilde{\O}$, in terms of the $\Z$-action given by the iterates of the successor map $S$. Recall that the Pinsker sigma-algebra $\Pi_G(X\bigr|\Theta)$ of a dynamical system $(X,\mu,G)$, with respect to a $G$-invariant sub-sigma-algebra~$\Theta$ consists of all measurable sets $A\subset X$ such that $h(\mu,G,\{A,A^c\}\bigr|\Theta)=0$. If $\Sigma_Y$ is a $G$-\inv\ sub-sigma-algebra on $X$ and $(Y,\nu,G)$ is the corresponding factor of $(X,\mu,G)$, then $\Pi_G(Y\bigr|\Theta)=\Pi_G(X\bigr|\Theta)\cap\Sigma_Y$. In case $(Y,\nu,G)$ is generated by a~finite measurable partition $\P$ of $X$ and $\Theta$ is trivial, we will write $\Pi_G(\P)$ instead of $\Pi_G(Y\bigr|\Theta)$.
\begin{thm}\label{pinsker_mult}
	Let $(X,\mu,G,\varphi)$ be a multiordered dynamical system with an action of a countable amenable group $G$. Let $\tilde\O=\varphi(X)$ denote the multiorder factor. The Pinsker sigma-algebra $\Pi_G(X\bigr|{\Sigma_{\tilde\O}})$ of the system $(X,\mu,G)$, relative to the multiorder~$\tilde\O$ is equal to the Pinsker sigma-algebra $\Pi_S(X\bigr|{\Sigma_{\tilde\O}})$ of the $\Z$-action $(X,\mu,S)$, relative to the multiorder $\tilde\O$, where $S$ is the successor map on $X$ associated with the multiorder $\tilde\O$.
\end{thm}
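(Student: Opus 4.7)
The plan is to deduce the equality of the two Pinsker sigma-algebras as a very direct consequence of Theorem \ref{eq_entropies}. First I would observe that both sides of the claimed equality are well-defined sub-sigma-algebras. Indeed, $\Sigma_{\tilde\O}$ is $G$-invariant by construction, and by Theorem \ref{motooe} the map $\varphi$ is also a factor map from the $\Z$-action $(X,\mu,S)$ to $(\tilde\O,\nu,\tilde S)$; therefore $\Sigma_{\tilde\O}$ is $S$-invariant as well, so the Pinsker sigma-algebra $\Pi_S(X\bigr|\Sigma_{\tilde\O})$ makes sense in the sense of the definition recalled just before the theorem.

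Next I would simply unfold the definition. A measurable set $A\subset X$ belongs to $\Pi_G(X\bigr|\Sigma_{\tilde\O})$ if and only if the finite partition $\P=\{A,A^c\}$ satisfies $h(\mu,G,\P\bigr|\Sigma_{\tilde\O})=0$, and it belongs to $\Pi_S(X\bigr|\Sigma_{\tilde\O})$ if and only if $h(\mu,S,\P\bigr|\Sigma_{\tilde\O})=0$. Since $\P$ is a finite measurable partition of $X$, Theorem \ref{eq_entropies} applies to $\P$ and yields
\begin{equation*}
h(\mu,G,\P\bigr|\Sigma_{\tilde\O})=h(\mu,S,\P\bigr|\Sigma_{\tilde\O}).
\end{equation*}
Consequently, one of the two entropies vanishes precisely when the other does, which gives the inclusion $\Pi_G(X\bigr|\Sigma_{\tilde\O})\subset\Pi_S(X\bigr|\Sigma_{\tilde\O})$ and, by symmetry, the reverse inclusion.

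In this proof there is essentially no obstacle beyond invoking the earlier theorem: the only point that deserves a line of justification is the $S$-invariance of $\Sigma_{\tilde\O}$ (which comes from Theorem \ref{motooe}) so that the right-hand Pinsker sigma-algebra is a legitimate object, and the observation that two-element partitions qualify as finite measurable partitions so that Theorem \ref{eq_entropies} is indeed applicable. Everything else is a direct comparison of the defining conditions.
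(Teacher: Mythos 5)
Your proposal is correct and follows essentially the same route as the paper: both reduce the statement to the definition of the relative Pinsker sigma-algebra via two-element partitions and then invoke Theorem \ref{eq_entropies} to equate the two conditional entropies. Your additional remark on the $S$-invariance of $\Sigma_{\tilde\O}$ (via Theorem \ref{motooe}) is a harmless extra justification that the paper leaves implicit.
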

\begin{proof}
	Let $\P$ be a finite measurable partition of $X$. Then $\P$ is measurable with respect to $\Pi_G(X\bigr|{\Sigma_{\tilde\O}})$ if and only if $h(\mu,G,\P|\Sigma_{\tilde{\O}})=0$, if and only if ${h(\mu,S,\P|\Sigma_{\tilde{\O}})=0}$ (by Theorem~\ref{eq_entropies}), if and only if $\P$ is measurable with respect to $\Pi_S(X\bigr|{\Sigma_{\tilde\O}})$.
\end{proof}
\begin{cor}
	Suppose that $(X,\mu,G,\varphi)$ is a multiordered dynamical system such that the multiorder factor $(\tilde\O,\nu,G)$ has entropy zero. In such case, the (unconditional) Pinsker sigma-algebra $\Pi_G(X)$ of this action equals $\Pi_S(X|\Sigma_{\tilde{O}})$.
\end{cor}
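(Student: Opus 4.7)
The plan is to chain together Theorem~\ref{pinsker_mult} with the classical fact that conditioning on a zero-entropy invariant sub-sigma-algebra does not affect the Pinsker sigma-algebra. More precisely, I will argue in two steps: first, that under the zero-entropy assumption on $(\tilde\O,\nu,G)$ one has
\begin{equation*}
\Pi_G(X)=\Pi_G(X\bigr|\Sigma_{\tilde\O}),
\end{equation*}
and then invoke Theorem~\ref{pinsker_mult} which identifies the right-hand side with $\Pi_S(X\bigr|\Sigma_{\tilde\O})$.

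For the first step, I would show that for every finite measurable partition $\P$ of $X$,
\begin{equation*}
h(\mu,G,\P)=h(\mu,G,\P\bigr|\Sigma_{\tilde\O}).
\end{equation*}
The inequality ``$\geq$'' is automatic by monotonicity of conditional entropy. For the reverse inequality, I would approximate $\Sigma_{\tilde\O}$ by an increasing sequence of finite $\Sigma_{\tilde\O}$-measurable partitions $\Q_n$ whose generated sigma-algebras increase to $\Sigma_{\tilde\O}$. For each $\Q_n$, the Abramov--Rokhlin-type formula in the amenable setting (or equivalently the identity $h(\mu,G,\P\vee\Q_n)=h(\mu,G,\Q_n)+h(\mu,G,\P\bigr|\Sigma_{\Q_n})$, where $\Sigma_{\Q_n}$ is the invariant sub-sigma-algebra generated by $\Q_n$) combined with $h(\mu,G,\Q_n)\leq h(\nu,G)=0$ gives
\begin{equation*}
h(\mu,G,\P)\leq h(\mu,G,\P\vee\Q_n)=h(\mu,G,\P\bigr|\Sigma_{\Q_n}).
\end{equation*}
Passing to the limit $n\to\infty$ and using continuity of conditional entropy with respect to an increasing sequence of sub-sigma-algebras (which yields $h(\mu,G,\P\bigr|\Sigma_{\Q_n})\searrow h(\mu,G,\P\bigr|\Sigma_{\tilde\O})$) produces the desired bound.

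Consequently, for any measurable $A\subset X$, the two-set partition $\{A,A^c\}$ satisfies $h(\mu,G,\{A,A^c\})=0$ if and only if $h(\mu,G,\{A,A^c\}\bigr|\Sigma_{\tilde\O})=0$; in other words, $\Pi_G(X)=\Pi_G(X\bigr|\Sigma_{\tilde\O})$. Applying Theorem~\ref{pinsker_mult} then gives $\Pi_G(X)=\Pi_S(X\bigr|\Sigma_{\tilde\O})$, as required. The only real subtlety is pinning down the correct Abramov--Rokhlin-style identity for countable amenable group actions and verifying the limiting argument for conditional dynamical entropy along refining sigma-algebras; this is standard but is the step that genuinely uses amenability (via F\o lner averaging in the definition of $h(\mu,G,\,\cdot\,)$) rather than being a purely static consequence of Theorem~\ref{pinsker_mult}.
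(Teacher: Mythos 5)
Your proposal is correct and follows essentially the same route the paper intends: combine Theorem~\ref{pinsker_mult} with the fact (already recorded in the paper as the corollary following Theorem~\ref{newentropy}) that a zero-entropy invariant factor does not affect conditional entropy, hence $\Pi_G(X)=\Pi_G(X\bigl|\Sigma_{\tilde\O})$. Your explicit derivation of that fact via the addition formula $h(\mu,G,\P\vee\Q_n)=h(\mu,G,\Q_n)+h(\mu,G,\P\bigl|\Sigma_{\Q_n})$ and martingale-type continuity along $\Sigma_{\Q_n}\nearrow\Sigma_{\tilde\O}$ is sound; the paper simply takes this standard step for granted.
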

\begin{rem}
	Since, in general, there is no connection between entropy of a multiorder under the action of $G$ and under the $\Z$-action generated by the iterates of~$\tilde S$, we cannot claim that $\Pi_G(X)=\Pi_{S}(X)$. This equality holds, however, if $\tilde O$ has ``double entropy zero'' (see Corollary~\ref{35}).
\end{rem}

Despite the above remark, Theorem \ref{pinsker_mult} allows to characterize the (unconditional) Pinsker factor of an arbitrary measure-preserving action of a countable amenable group $G$ by a formula which resembles the Rokhlin--Sinai formula for $\Z$-actions $\Pi_{T}(\P)=\bigcap_{n\ge 1}\P^{(-\infty,-n]}$. The result sheds a new light on the Pinsker factor even in the classical case of $G=\Z$. It turns out that, in addition to the well-known fact that $\Pi_{T}(\P)$ can be computed using either the remote past or the remote future (which coresponds to two natural orders on $\Z$), it can be expressed using an appropriately understood remote past (or future) with respect to any (non-standard) multiorder on $\Z$, for instance such as given in Example~\ref{Z-ord} in Appendix~B.
\begin{thm}\label{R-s}
	Let $(X,\Sigma,\mu,G)$ be a measure-theoretic dynamical system with an action of a countable amenable group $G$. Let $\P$ be a finite measurable partition of~$X$. Fix a multiorder $(\tilde{\O},\Sigma_{\tilde{\O}},\nu,G)$ on $G$. A set $A$ belongs to the Pinsker sigma-algebra $\Pi_{G}(\P)$ of the $G$-process generated by $\P$ if and only if:
	\begin{equation}\label{R-s_eq}
		A\in \bigcap_{n\ge 1} \P^{(-\infty,-n]^{\prec}},
	\end{equation}
	for $\nu$-almost every $\prec\,\in\tilde{\O}$. Equivalently, for $\nu$-almost every $\prec\,\in\tilde{\O}$,
	\begin{equation}\label{R-s_eq2}
		\Pi_{G}(\P)= \bigcap_{g\in G}\bigcap_{n\ge 1} \P^{(-\infty,-n]^{g(\prec)}}.
	\end{equation}	
\end{thm}
\begin{proof}
	Let $(Y,\Sigma_Y,\mu_Y,G)$ denote the symbolic factor of~$(X,\Sigma,\mu,G)$ generated by~$\P$ and let $(\tilde{\O},\Sigma_{\tilde{\O}},\nu,G)$ be an arbitrary multiorder on $G$. Consider the product dynamical system ($Y\times \tilde{\O},\Sigma_Y\otimes \Sigma_{\tilde{\O}},\mu_Y\times\nu,G)$. As remarked in Corollary \ref{produkt}, this is a multiordered $G$-action with the projection on the second coordinate in the role of the factor map $\varphi$. It is clear that the Pinsker sigma-algebra $\Pi_{G}(Y\times\tilde{\O}\bigr|\Sigma_{\tilde{\O}})$ contains the product sigma-algebra
	$\Pi_{G}(\P)\otimes\Sigma_{\tilde\O}$.
	
	In view of Theorem~\ref{pinsker_mult}, we now have 
	\begin{equation*}
		\Pi_{G}(Y\times \tilde{\O}\bigr|\Sigma_{\tilde{\O}})=\Pi_S(Y\times\tilde{\O}\bigr|{\Sigma_{\tilde\O}}),
	\end{equation*} 
	where $S$ is the successor map on $Y\times\tilde\O$ associated to the multiorder factor via the projection~$\varphi$.
	We can view $\P$ and $\Sigma_{\tilde{\O}}$, as a partition and sigma-algebra in $Y\times\tilde{\O}$, respectively. Notice that $\P\vee\Sigma_{\tilde{\O}}$ generates $\Sigma_Y\otimes\Sigma_{\tilde{\O}}$ not only under the action of $G$ but also under the action of $S$. Indeed, consider two different pairs $(y,\prec)$ and $(y',\prec')$ in $Y\times\tilde{\O}$. Either $\prec\,\neq\,\prec'$, in which case the pairs are clearly separated by $\P\vee\Sigma_{\tilde{\O}}$, or $\prec\,=\,\prec'$ and $y\neq y'$. In the latter case the iterates $S^n$ act on these pairs by the same elements of $G$, moreover, as $n$ ranges over $\Z$, these elements exhaust the whole group. This implies that, for large enough $n$, the partitions $\P^{[-n,n]^{S}}$ separate the considered pairs. Now, by a~well-known formula for $\Z$-actions (see e.g.~\cite{Z}) the Pinsker sigma-algebra $\Pi_S(Y\times\tilde{\O}\bigr|\Sigma_{\tilde{\O}})$ is equal to the intersection $\bigcap_{n\ge 1}\bigl(\P^{(-\infty,-n]^S}\vee \Sigma_{\tilde{O}}\bigr)$. We conclude that
	\begin{equation}\label{PScos}
		\Pi_{G}(\P)\otimes\Sigma_{\tilde\O}\preccurlyeq\bigcap_{n\ge 1}\bigl(\P^{(-\infty,-n]^S}\vee \Sigma_{\tilde{O}}\bigr).
	\end{equation}
	
	Consider a set $A\in \Pi_{G}(\P)$. For each $n\ge 1$, we have $A\times \tilde{\O}\in{ \P^{(-\infty,-n]^S}\vee\Sigma_{\tilde{O}}}$, which implies that for $\nu$-almost every $\prec\,\in\tilde{\O}$, the intersection $(A\times \tilde{\O})\cap\varphi^{-1}(\prec)$ belongs to $\P^{(-\infty,-n]^S}\bigr|_{\varphi^{-1}(\prec)}=\P^{(-\infty,-n]^{\prec}}\bigr|_{\varphi^{-1}(\prec)}$ (see formula \eqref{Sprec}). Notice that $\P^{(-\infty,-n]^{\prec}}$ depends only on the $G$-action on $X$. Hence, we conclude that for $\nu$-almost every $\prec\,\in \tilde{\O}$, $A$~is measurable with respect to $\P^{(-\infty,-n]^{\prec}}$. Since this is true for every $n\ge 1$, it follows that $A\in \bigcap_{n\ge 1}\P^{(-\infty,-n]^{\prec}}$ for $\nu$-almost every $\prec\,\in\tilde{\O}$.
	
	Conversely, consider a set $A\subset Y$ measurable with respect to $\bigcap_{n\ge 1}\P^{(-\infty,-n]^{\prec}}$ for $\nu$-almost every $\prec\,\in\tilde{\O}$. Let  $C=A\times\tilde{\O}$. The intersection $C\cap\varphi^{-1}(\prec)$ is measurable with respect to $\bigcap_{n\ge 1}\P^{(-\infty,-n]^{\prec}}\bigr|_{\varphi^{-1}(\prec)}=\bigcap_{n\ge 1}\P^{(-\infty,-n]^{S}}\bigr|_{\varphi^{-1}(\prec)}$ for $\nu$-almost every $\prec\,\in\tilde{\O}$, which, together with measurability of $C$ with respect to $\Sigma_Y\times\Sigma_{\tilde\O}$, implies that $C$ is measurable with respect to $\bigl(\bigcap_{n\ge 1}\P^{(-\infty,-n]^S}\bigr)\vee\Sigma_{\tilde{\O}}$ (see e.g.\ comments following Lemma~1.2.2 in \cite{D}).
	It is clear that $\bigl(\bigcap_{n\ge 1}\P^{(-\infty,-n]^S}\bigr)\vee \Sigma_{\tilde{O}}$ is refined by (but in general is not equal to) $\bigcap_{n\ge 1}\bigl(\P^{(-\infty,-n]^S}\vee \Sigma_{\tilde{O}}\bigr)$. So $C$ is measurable with respect to $\Pi_S(Y\times\tilde{\O}\bigr|\Sigma_{\tilde{\O}})=\Pi_G(Y\times\tilde{\O}\bigr|\Sigma_{\tilde{\O}})$. This means that if $\Q=\{C,C^c\}=\{A\times\tilde{\O},A^c\times{\tilde{\O}}\}$, then $h(\mu_Y\times\nu,G,\Q\bigr|\Sigma_{\tilde{\O}})=0$. Since $G$ acts independently on each axis, this implies that $h(\mu_Y,G,\R)=0$, where $\R$ is the partition $\{A,A^c\}$ of~$Y$. As a consequence $A\in\Pi_G(\P)$. The proof of \eqref{R-s_eq} is now complete. The only nontrivial part of \eqref{R-s_eq2} is the inclusion of sigma-algebras
	\begin{equation}\label{R-S_eq100}
		\mathcal{A}_{\prec}:=\bigcap_{g\in G}\bigcap_{n\ge 1} \P^{(-\infty,-n]^{g(\prec)}}\preccurlyeq \Pi_G(\P),
	\end{equation}
	for $\nu$-almost every $\prec$. It suffices to prove this for an ergodic measure $\nu$. For any $\prec\,\in\tilde{\O}$ the sigma-algebra $\mathcal{A}_{\prec}$ is $G$-invariant. Using ergodicity of $\nu$, it can be shown that $\mathcal{A}_{\prec}$ is the same sigma-algebra, henceforth denoted by $\mathcal{A}$, for a set of $\prec$ of full measure $\nu$. Since $\mathcal{A}\preccurlyeq \bigcap_{n\ge 1} \P^{(-\infty,-n]^{\prec}}$ for $\nu$-almost every $\prec\,\in\tilde{\O}$, by \eqref{R-s_eq}, we have $\mathcal{A}\preccurlyeq \Pi_{G}(\P)$, which implies that $\mathcal{A}_{\prec}\preccurlyeq \Pi_{G}(\P)$ for $\nu$-almost every $\prec\,\in\tilde{\O}$.
	
\end{proof}
\begin{rem}
	As pointed out by Alexandre Danilenko \cite{D0}, Theorem~\ref{R-s} can be alternatively proved basing on a more general approach developed in \cite{D1}.
\end{rem}
\begin{appendix}
\section{Ordered tiling systems}\label{til} 
\addtocontents{toc}{\protect\setcounter{tocdepth}{1}}

In this section of the Appendix we summarize some facts concerning tilings and systems of tilings of amenable groups introduced and studied in \cite{DHZ} and we propose a new notion of an ordered tiling system. 

\subsection{General tilings}
Let $G$ be a countable group. 
\begin{defn}A \emph{tiling} $\T$ of $G$ is a partition of $G$ into (countably many) finite sets (called \emph{tiles}), i.e.\ we have
	\begin{equation*}
		G=\bigsqcup_{T\in\T}T \text{\ \ (disjoint union)}.
	\end{equation*}
\end{defn}

\begin{defn}\label{prop} A tiling $\T$ is \emph{proper} if there exists a finite collection $\S$ of finite sets $S\in\S$ (not necessarily different) each containing the unit $e$, called the \emph{shapes} of $\T$, such that for every $T\in\T$ there exists a shape $S\in\S$ satisfying $T=Sc$ for some $c\in G$ (in fact, we then have $c\in T$). 
\end{defn}
From now on all tilings we will be dealing with are assumed to be proper.  
For a~(proper) tiling $\T$ we will always fix one collection of shapes $\S$ and one representation $T\mapsto(S,c)$, where $S\in\S, c\in G$ are such that $T=Sc$. (We remark that, in general, there may be more than one such representation, even when $\S$ is fixed and contains no pairs of equal sets.) Once such a representation is fixed, we will call $S$ and $c$ the \emph{shape} and \emph{center} of $T$, respectively. Given $S\in\S$, we will denote by $C_S(\T)$ the set of centers of the tiles having the shape $S$, while $C(\T)=\bigsqcup_{S\in\S}C_S(\T)$ will be used to denote the set of centers of all tiles.

\subsection{Dynamical tilings}

Let $\T$ be a tiling with the collection of shapes $\S$. Denote by $\rm V$ the finite alphabet consisting of symbols assigned bijectively to the shapes of~$\T$ plus one additional symbol:
\begin{equation}\label{alfa}
	\rm V = \{``S": S\in\S\}\cup\{``0"\}.
\end{equation} 
Then $\T$ can be identified with the symbolic element, denoted by the same letter $\T\in{\rm V}^G$, defined as follows: 
\begin{equation*}
	\T(g) = \begin{cases}
		``\!S" \text{ for some }S\in\S, &\text{if } g\in C_S(\T),\\ 
		``0", & \text{otherwise}.
	\end{cases}
\end{equation*}
\begin{defn}
	Let $\rm V$ be an alphabet of the form \eqref{alfa} for some finite collection $\S$ of finite sets $S$. Let $\TT\subset {\rm V}^G$ be a subshift such that each element $\T\in\TT$ represents a  tiling with the collection of shapes contained in $\S$. Then we call $\TT$ a \emph{dynamical tiling} and $\S$ \emph{the collection of shapes} of $\TT$. 
\end{defn}
It is elementary to see that the orbit-closure (under the shift-action of $G$) of any  tiling $\T$ is a dynamical tiling.

\subsection{Systems of tilings and tiling systems}
In the sequel we will be using a~very special \tl\ joining of dynamical tilings. By
a~\emph{\tl\ joining} of a~\sq\ of \ds s $(X_k,G)$, $k\in\N$, (denoted by $\bigvee_{k\in\N}X_k$)\footnote{We remark that the symbol $\bigvee_{k\in\N}X_k$ refers to many possible topological joinings.} we mean any closed subset of the Cartesian product $\prod_{k\in\N}X_k$ which has full projections onto the coordinates $X_k$, $k\in\N$, and is \inv\ under the product action given by $g(x_1,x_2,\dots)=(g(x_1),g(x_2),\dots)$. 

\begin{defn}\label{tili}
	Consider a \sq\ of dynamical tilings $(\TT_k)_{k\in \N}$. By a \emph{system of tilings} (generated by the dynamical tilings $\TT_k$) we mean any \tl\ joining $\TTT=\bigvee_{k\in\N}\TT_k$. 
\end{defn}

The elements of $\TTT$ have the form of \sq s of tilings $\boldsymbol\T=(\T_k)_{k\in\N}$, where $\T_k\in\TT_k$ for each $k$.

\begin{defn}\label{tili1}
	Let $\TTT=\bigvee_{k\in\N}\TT_k$ be a system of tilings and let $\S_k$ denote the collection of shapes of $\TT_k$. The system of tilings is:
	\begin{itemize}
		\item \emph{congruent}, if for each $\boldsymbol{\T}=(\T_k)_{k\in\N}\in\TTT$ and each 	
		$k\in\N$, every tile of $\T_{k+1}$ is a union of some tiles of $\T_k$.
		\item \emph{deterministic}, if it is congruent and for every $k\ge 1$ and any ${S'\in\S_{k+1}}$, there exist sets $C_{S}(S')\subset S'$ indexed by $S\in\S_k$, such that
		\begin{equation*}
			S'=\bigsqcup_{S\in\S_k}\ \bigsqcup_{c\in C_S(S')}Sc 	
		\end{equation*}
		and for each $\boldsymbol{\T}=(\T_i)_{i\in\N}\in\TTT$, whenever $S'c'$ is
		a tile of $\T_{k+1}$, then 
		\begin{equation*}
			S'c'=\bigsqcup_{S\in\S_k}\ \bigsqcup_{c\in C_{S}(S')}Scc'
		\end{equation*}
		is the partition of $S'c'$ by the tiles of $\T_k$. 
	\end{itemize}
\end{defn}
\begin{rem}\label{remd} In a deterministic system of tilings, for each $\boldsymbol{\T}=(\T_k)_{k\in\N}\in\TTT$, each tiling $\T_{k'}$ \emph{determines} all the tilings $\mathcal T_k$ with $k<k'$, and the assignment $\T_{k'}\mapsto\T_k$ is a \tl\ factor map from $\T_{k'}$ onto $\T_k$. In such a case, the joining $\TTT$ is in fact an inverse limit
	\begin{equation*}
		\TTT=\overset\longleftarrow{\lim_{k\to\infty}}\mathsf{T}_k.
	\end{equation*}
	The inverse limit will not change if the sequence $(\mathsf{T}_k)_{k\in\N}$ is replaced by a subsequence. We will call such a replacement \emph{speeding up the tiling system}.
\end{rem}

The next two definitions apply to amenable groups only.
\begin{defn}\label{fst}
	A system of tilings $\TTT$ is \emph{F\o lner}, if the union of the collections of shapes $\bigcup_{k\in\N}\S_k$ (arranged in a \sq) is a F\o lner \sq\ in $G$.
\end{defn}
\begin{defn}
	Any F\o lner, deterministic and minimal\footnote{A \tl\ \ds\ is \emph{minimal} if it contains no proper closed invariant subsets.} system of tilings $\TTT$ will be called simply a \emph{tiling system} (not to be confused with much less organized system of tilings).
\end{defn}

The following theorem will play a crucial role in our considerations:	

\begin{thm}(\cite[Theorem 5.2]{DHZ})\label{dhz} Every countable amenable group admits a tiling system with \tl\ entropy~zero.
\end{thm}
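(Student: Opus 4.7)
The plan is to build, by induction on $k$, a sequence of finite shape families $\S_k$ and dynamical tilings $\TT_k$, glue them into a congruent, deterministic, F\o lner system $\TTT$ that is also minimal with $h_{\top}(\TTT)=0$. At the base level, I would invoke the Ornstein--Weiss machinery (or, more precisely, its refinement yielding \emph{exact} tilings of any countable amenable group) to obtain a tiling $\T_1$ of $G$ by finitely many F\o lner shapes $\S_1$, and let $\TT_1$ be the smallest closed shift-invariant set of tilings compatible with the hierarchical construction that follows.

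For the inductive step, assume $\TT_k$ with shape family $\S_k$ is already chosen. I would pick $\S_{k+1}$ to consist of finitely many F\o lner sets that are much larger and much more invariant than those of $\S_k$, and such that each $S'\in\S_{k+1}$ admits a fixed partition into translates of elements of $\S_k$, giving the determinism of Definition~\ref{tili1}. Concretely, one first applies the exact tiling result at a much larger scale, then rounds the boundaries so that each new tile is an \emph{exact} union of $\T_k$-tiles: the cost of this rounding is negligible when the scale ratio is large, so the F\o lner property is preserved. I would also arrange the substitutions \emph{primitively}, so that every shape $S\in\S_k$ occurs many times inside every shape $S'\in\S_{k+1}$.

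Let $\TTT_0\subset\prod_k\TT_k$ be the set of sequences $(\T_k)_{k\in\N}$ consistent with the prescribed substitutions; it is closed, shift-invariant, automatically deterministic and congruent, and its projection onto level $k$ equals $\TT_k$ by construction. Take $\TTT\subset\TTT_0$ to be any minimal $G$-invariant closed subsystem; primitivity forces the projection of $\TTT$ onto each $\TT_k$ to remain surjective, so $\TTT$ is a system of tilings in the sense of Definition~\ref{tili}. The F\o lner property of $\bigcup_k\S_k$ is built into the size and invariance of the shapes chosen at each level. Finally, $\TTT$ is the inverse limit of the deterministic bonding maps $\TT_{k+1}\to\TT_k$, so $h_{\top}(\TTT)=\lim_k h_{\top}(\TT_k)$; each $\TT_k$ is a primitive substitution-type subshift whose pattern complexity grows subexponentially by the classical argument for primitive substitutions, so $h_{\top}(\TT_k)=0$ for every $k$, and therefore $h_{\top}(\TTT)=0$.

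The main technical obstacle is the inductive step: producing $\S_{k+1}$-shapes that are exact, not merely approximate, unions of $\S_k$-tiles for \emph{every} admissible $\T_k$, while keeping the new shapes F\o lner and the cardinality $|\S_{k+1}|$ controlled. Ornstein--Weiss only delivers quasi-tilings with small-density overlaps; upgrading to an exact hierarchical structure requires delicate center adjustments and is the heart of the DHZ argument. A secondary tension is between the primitivity demanded by minimality (which forces shapes $S'\in\S_{k+1}$ to be large enough to contain translated copies of every $S\in\S_k$) and the desire to keep $|\S_{k+1}|$ modest and the shape sizes bounded only by what is needed for subexponential pattern complexity. Both tensions are resolved by letting the scale ratio between consecutive levels grow sufficiently fast, so that primitivity and the F\o lner property come essentially for free and the entropy estimate $\log|\S_k|/\min_{S\in\S_k}|S|$ tends to zero.
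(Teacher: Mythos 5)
The statement you are asked about is not proved in this paper at all: it is quoted verbatim from \cite[Theorem 5.2]{DHZ}, and the only original content surrounding it is Remark \ref{mini}, which observes that passing to a minimal subsystem (and replacing each $\TT_k$ by its projection) yields a \emph{minimal} system with the same properties. So there is no internal proof to compare against; the question is whether your outline would stand on its own, and it would not. You correctly identify the architecture of the DHZ construction --- hierarchical shape families $\S_k$, determinism, the F\o lner condition on $\bigcup_k\S_k$, and the entropy bound via $\log|\S_k|/\min_{S\in\S_k}|S|\to 0$ --- but the step you describe as ``invoke the refinement of Ornstein--Weiss yielding exact tilings'' and later concede ``requires delicate center adjustments and is the heart of the DHZ argument'' is precisely the theorem being proved. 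Ornstein--Weiss supplies only $\eps$-disjoint quasi-tilings covering a $(1-\eps)$-fraction of the group; upgrading these to genuine partitions of $G$ by finitely many F\o lner shapes, coherently across infinitely many levels, is the entire content of \cite{DHZ}, and no soft argument is known to supply it. A proof that black-boxes this step is a restatement of the theorem, not a proof of it.

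Two smaller points. First, your justification that each $\TT_k$ has zero entropy ``by the classical argument for primitive substitutions'' does not apply: that argument is a $\Z$-specific complexity statement, and primitivity plays no role in the entropy bound. The correct mechanism is the one you relegate to your final sentence: by determinism, $\TT_k$ is a topological factor of $\TT_{k'}$ for every $k'>k$, and a direct count of configurations of a level-$k'$ tiling on a large F\o lner set $F$ (choice of centers, at density at most $1/m_{k'}$ with $m_{k'}=\min_{S\in\S_{k'}}|S|$, times a choice of shape per center) gives $h_{\mathrm{top}}(\TT_{k'})\le H(1/m_{k'})+\log|\S_{k'}|/m_{k'}\to 0$; hence every level, and therefore the inverse limit, has entropy zero. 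Second, primitivity is likewise unnecessary for minimality: as in Remark \ref{mini}, one takes any minimal subsystem of $\TTT$ and redefines the $\TT_k$ as its coordinate projections, which are still dynamical tilings, so the surjectivity required by Definition \ref{tili} is restored for free. Imposing primitive substitution rules only adds constraints to the already delicate inductive step.
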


\begin{rem}\label{mini}
	\cite[Theorem 5.2]{DHZ} asserts the existence of a deterministic F\o lner system of tilings $\TTT$ with \tl\ entropy~zero (not necessarily minimal). However, since $\TTT$ obviously contains a minimal subsystem, there also exists a minimal system of tilings with all the above properties. 
\end{rem}

\subsection{Ordered tiling systems}
Every tiling system can be equipped with a partial order, as described below. Recall, that for every $k\ge2$, every shape $S\in\S_k$ is partitioned into subtiles of order $k-1$:
\begin{equation*}
	S=\bigsqcup_{S'\in\S_{k-1}}\ \bigsqcup_{c'\in C_{S'}(S)}S'c'.
\end{equation*}
\begin{rem}For the above to make sense also for $k=1$ we agree that $\S_0 = \{\{e\}\}$, i.e.\ we introduce the tiling of order $0$, $\T_0$, as the tiling whose all tiles are singletons. Notice that this tiling is a fixpoint of the shift action, hence constitutes a one-element dynamical tiling $\TT_0=\{\T_0\}$.
\end{rem}

Let $C(S)$ be the set of all centers of the subtiles of $S$, i.e.\ $C(S)=\bigsqcup_{S'\in\S_{k-1}}C_{S'}(S)$. In $C(S)$ we fix some ordering, as follows:
\begin{equation*}
	C(S) = \{c_1^S,c_2^S,\dots,c_{l(S)}^S\},
\end{equation*}
where $l(S) = |C(S)|$. Then the partition of $S$ into subtiles of order $k-1$ also becomes ordered:
\begin{equation}\label{decs}
	S=\bigsqcup_{i=1}^{l(S)}S'_ic_i^S,
\end{equation}
where $S'_i\in\S_{k-1}$ and $c_i^S \in C_{S'_i}(S)$, for each $i=1,2,\dots,l(S)$. 

\begin{defn} By an \emph{ordered tiling system} we will mean a tiling system $\TTT$ with the ordering of subtiles established for each shape $S\in\S_k$ ($k\in\N$).
\end{defn}

Observe that the notion of subtiles and their ordering applies not only to the shapes $S\in\S_k$ but also to any tile of any $\T_k\in\TT_k$. Indeed, every such tile, say $T$, has the form $Sc$ where $S\in\S_k$ then its subtiles are naturally ordered as follows:
\begin{equation}\label{dect}
	T = \bigsqcup_{i=1}^{l(S)}S'_ic_i^Sc.
\end{equation}
Notice that for any $k'>k$ and $S'\in\S_{k'}$ the above ordering induces, in a natural way (lexicographically), an order on the subtiles of $S'$ with shapes in $\S_k$.
We will use this property in the next section, when speeding up an ordered tiling system.
\section{Tiling-based multiorder}\label{til-ord}
This section of the Appendix is devoted to proving a strengthened version of Theorem~\ref{exi} by invoking the machinery of ordered tiling systems.

\begin{defn}
	Let $\TTT$ be a tiling system and let $\bT=(\T_k)_{k\in\N}\in\TTT$. We say that 
	$\bT$ is \emph{in general position} if the central tiles of $\T_k$ cover $G$, that is, if
	\begin{equation*}
		\bigcup_{k\in\N} T^e_k = G,
	\end{equation*}
	where $T_k^e$ denotes the central tile of $\T_k$, i.e.\ the tile containing the unit $e$.
\end{defn}
Observe that in any congruent system of tilings, the central tiles always form an increasing (with respect to inclusion) \sq, so the above union is increasing. We will soon show that the subset of $\TTT$ consisting of those $\bT$ which are in general position is both topologically large, i.e.\ residual, and large in the sense of measure, i.e.\ has measure $1$ for every \im\ on $\TTT$ (in fact, we will show this for an even smaller set).

Now let $\TTT$ be an ordered tiling system and let $\bT=(\T_k)_{k\in\N}\in\TTT$ be in general position. Then $\bT$ determines a linear ordering of $G$, by the following rule:

Let $a\neq b\in G$. There exists $k\ge 1$ such that $a,b$ belong to a common tile of $\T_k$ (indeed, eventually they belong to a common central tile of some $\T_k$). Let $k(a,b)$ be the smallest such index $k$, and let $T=Sc$ ($S\in\S_{k(a,b)}$) be the tile of $\T_{k(a,b)}$ which contains $a,b$. By the definition of $k(a,b)$, $a$ and $b$ belong to different subtiles of $T$. We say that $a\prec_{_{\bT}} b$ (or $a\succ_{_{\!\!\bT}} b$) if $a$ belongs to the subtile of $T$ which precedes (follows) the subtile containing $b$ in the ordering \eqref{dect} of the subtiles of $T$.

It is not hard to see that the orders $\prec_{_{\bT}}$ will not change if we speed up the ordered tiling system $\TTT$ (and appropriately compose the ordering of subtiles).

Observe that for any $a,b\in G$ with $a\prec_{_{\bT}} b$ there are at most finitely many elements $g\in G$ such that $a\prec_{_{\bT}} g\prec_{_{\bT}} b$ (indeed, all such elements must belong to the common tile $T$ of $\T_{k(a,b)}$). This implies that $(G,\prec_{_{\bT}})$ is order-isomorphic to either $(\Z,<)$ or $(\N,<)$, or $(-\N,<)$, with the two latter cases occurring if the central tiles $T^e_{k-1}$ have eventually (i.e.\ from some $k$ onward) the smallest (resp. largest) index among the subtiles of $T^e_k$. 

We are interested only in orders of type $\Z$. Hence, the following definition and theorem are of importance.

\begin{defn}
	Let $\TTT$ be an ordered tiling system. The elements $\bT\in\TTT$ which induce an ordering of $G$ of type $\Z$ will be called \emph{straight}. We will denote
	\begin{equation*}
		\TTT_\mathsf{STR}=\{\bT\in\TTT:\bT\text{ is straight}\}.
	\end{equation*}
\end{defn}

It is clear that $\bT\in\TTT_{\mathsf{STR}}$ implies that $\bT$ is in general position.

For $\bT\in\TTT_\mathsf{STR}$, the order intervals with respect to $\prec_{_{\bT}}$ will be denoted by $[a,b]^{\bT}$ (rather than $[a,b]^{\prec_{_{\bT}}}$).
The next theorem establishes the uniform F\o lner property of the family $\{\prec_{_{\bT}}:\bT\in\TTT_\mathsf{STR}\}$.

\begin{thm}\label{oi}
	Let $\TTT$ be an ordered tiling system. For any finite subset $K\subset G$ and any $\eps>0$ there exists $n$ such that for any 
	$\bT\in\TTT_\mathsf{STR}$, any order-interval of length at least $n$ with respect to the order $\prec_{_{\bT}}$ on $G$, is $(K,\eps)$-\inv. 
\end{thm}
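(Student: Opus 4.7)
The plan rests on a single structural observation: for every $\bT \in \TTT_\mathsf{STR}$ and every $k$, each tile of $\T_k$ is itself an order interval with respect to $\prec_{_{\bT}}$. Granted this, a long $\prec_{_{\bT}}$-order interval decomposes cleanly into two partially taken boundary tiles plus a run of full intermediate tiles of $\T_k$, after which choosing $k$ large enough for every $S \in \S_k$ to be approximately $K$-invariant immediately produces the desired estimate.

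First I would verify the structural observation. Suppose $a, b$ lie in a tile $T$ of $\T_k$ and $g \in G$ satisfies $a \prec_{_{\bT}} g \prec_{_{\bT}} b$. If $g \notin T$, let $k' > k$ be the smallest level at which $g$ shares a tile with $a$; by congruence this $k'$-tile $T'$ also contains $b$. A short induction on $k'-k$ shows that within $T'$, any $k$-subtile occupies a \emph{consecutive} block in the induced lex order: at each level the immediate subtiles form consecutive blocks by definition \eqref{dect}, and this pattern propagates downward. Hence the distinct $k$-subtiles of $T'$ containing $\{a,b\}$ and $\{g\}$ do not interleave, contradicting $a \prec_{_{\bT}} g \prec_{_{\bT}} b$.

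Next I would set $\delta = \eps/4$ and use the F\o lner property (Definition \ref{fst}) to choose $k$ such that every $S \in \S_k$ is $(K, \delta)$-invariant; this is possible because each $\S_k$ is finite, so the F\o lner enumeration eventually exhausts all shapes of lower levels. With $M_k = \max_{S \in \S_k}|S|$ I would set $n = \max\{M_k + 1,\,\lceil 4(|K|+1)M_k/\eps \rceil\}$. Fix any $\bT \in \TTT_\mathsf{STR}$ and any order interval $I = [a,b]^{\bT}$ with $|I| \geq n$. Since $\bT$ is straight, only finitely many tiles of $\T_k$ meet $I$, and the structural observation yields the disjoint decomposition
\begin{equation*}
I = (T_- \cap I) \sqcup T^1 \sqcup \cdots \sqcup T^m \sqcup (T_+ \cap I),
\end{equation*}
where $T_\pm$ are the tiles of $\T_k$ containing the endpoints (distinct because $|I| > M_k$) and $T^1,\ldots,T^m$ are the full intermediate tiles.

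The remaining computation is routine bookkeeping. Each full tile $T^i \subset I$ contributes at most $|KT^i \triangle T^i| < \delta |T^i|$ to both $|KI\setminus I|$ and $|I \setminus KI|$, summing across $i$ to at most $2\delta|I| = (\eps/2)|I|$. The two boundary tiles contribute in total at most $|KT_-| + |KT_+| + |T_-| + |T_+| \le 2(|K|+1)M_k$, which by the choice of $n$ is at most $(\eps/2)|I|$. Combining gives $|KI \triangle I| \leq \eps|I|$, as required. The main obstacle I anticipate is the structural observation itself; making the induction across levels precise takes some care, but once it is in place the F\o lner estimate is immediate.
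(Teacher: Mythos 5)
Your proposal is correct and follows essentially the same route as the paper's proof: pick a level $k$ at which all shapes of $\S_k$ are almost $K$-invariant (possible since $\S_k$ is finite and the shapes form a F\o lner sequence), decompose a sufficiently long order interval into complete tiles of $\T_k$ plus a small prefix and suffix, and combine the invariance of the full tiles with the negligible size of the boundary pieces. The only difference is that you explicitly prove the structural fact that tiles of $\T_k$ are $\prec_{_{\bT}}$-order intervals (so the decomposition exists), which the paper asserts directly from the definition of $\prec_{_{\bT}}$; your induction on levels is a sound justification of that step.
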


\begin{proof}
	Let $k$ be such that any shape $S\in\S_k$ is $(K,\frac\eps2)$-\inv\ and let $N$ denote the largest cardinality of an $S\in\S_k$. Put $n=\lceil\frac{2N}\delta\rceil$, where $\delta$ will be specified in a~moment. Fix any $\bT=(\T_k)_{k\in\N}\in\TTT_\mathsf{STR}$. By the definition of $\prec_{_{\bT}}$, any order-interval $[a,b]^{\bT}$ of length $n$ or larger is a union of complete tiles of $\T_k$ and perhaps a prefix and suffix which are subsets of some tiles of $\T_k$. The joint cardinality of the suffix and prefix is less than $2N$. Since the property of being $(K,\frac\eps2)$-\inv\ is preserved by finite disjoint union of sets, the union of complete tiles of $\T_k$ contained in $[a,b]^{\bT}$ is $(K,\frac\eps2)$-\inv. The prefix and suffix constitute at most a fraction~$\delta$ of the whole interval. It is now clear that for a sufficiently small $\delta$ (in fact any $0<\delta\le\frac\eps{2(|K|+1)}$ will do), the entire interval $[a,b]^{\bT}$ is $(K,\eps)$-\inv. 
\end{proof}

\begin{lem}\label{large}
	Let $\TTT$ be an ordered tiling system. Then the set $\TTT_\mathsf{STR}$
	is residual and has measure $1$ for every invariant Borel probability measure on $\TTT$.
\end{lem}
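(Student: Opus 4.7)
The plan is to characterize $\TTT_\mathsf{STR}$ as a countable intersection of local conditions, verify that it is a $G_\delta$-set, establish its full measure with respect to any $G$-invariant Borel probability measure $\mu$ on $\TTT$, and then upgrade full measure to density using minimality of $\TTT$, which will give residuality for free. The crucial observation is that a point $\bT\in\TTT$ is straight iff (a) $\bT$ is in general position, i.e.\ $\bigcup_k T^e_k=G$, (b) $T^e_k$ is not eventually the first subtile of $T^e_{k+1}$, and (c) $T^e_k$ is not eventually the last subtile of $T^e_{k+1}$; indeed (a) is needed for the order to be defined on all of $G$, while (b), (c) precisely exclude the alternatives of order-types $\N$ and $-\N$ flagged in the discussion above the lemma. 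Each of the base events $\{g\in T^e_k\}$ and $\{T^e_k\text{ is the first/last subtile of }T^e_{k+1}\}$ depends on only finitely many coordinates of $\bT$ and is clopen, so each of (a)--(c), and hence $\TTT_\mathsf{STR}$, is $G_\delta$.

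For the measure part, I would first establish the auxiliary fact that, conditional on $T^e_k(\bT)$ having shape $S\in\S_k$, the position $h\in S$ of $e$ within this tile is uniformly distributed over $S$. This follows directly from invariance of $\mu$: for any $h,s\in S$, the shift by $g=sh^{-1}$ carries the cylinder $\{\bT:T^e_k(\bT)=Sh^{-1}\}$ bijectively onto $\{\bT:T^e_k(\bT)=Ss^{-1}\}$, forcing the two cylinders to have equal $\mu$-measure. Using this, (a) holds $\mu$-a.s.\ by the F\o lner property of the shapes: for fixed $g\in G$ and $\eps>0$, every $S\in\S_k$ is $(\{g\},\eps)$-invariant for $k$ large enough, so the uniform-distribution identity gives $\mu(g\in T^e_k\mid \text{shape }S)=|S\cap g^{-1}S|/|S|\ge 1-\eps$. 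Combined with congruence (once $g\in T^e_k$, then $g\in T^e_{k'}$ for all $k'\ge k$), we get $\mu\{g\in T^e_k\text{ for some }k\}=1$, and a countable intersection over $g\in G$ yields full measure of the general position set.

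For (b) (and symmetrically (c)) the idea is to first speed up $\TTT$, which as noted in the appendix leaves $\TTT_\mathsf{STR}$ unchanged, in such a way that at every new level the first subtile $I(S')$ of any shape $S'$ occupies at most a fixed fraction $c<1$ of $S'$; this is achievable by passing to a sufficiently sparse subsequence of levels, using that the F\o lner property forces $\min_{S'\in\S_{k_{j+1}}}|S'|\to\infty$ while the first subtile at the preceding new level is a tile from the bounded family $\S_{k_j}$. Writing $B_k=\{T^e_k\text{ is the first subtile of }T^e_{k+1}\}$ and iterating the conditional uniform-distribution argument along the nested first-subtile tower inside $T^e_{k_1+1}$, one obtains $\mu\bigl(\bigcap_{k_0\le k\le k_1}B_k\bigr)\le c^{k_1-k_0+1}\to 0$, so $\mu\bigl(\bigcap_{k\ge k_0}B_k\bigr)=0$ for each $k_0$; the countable union $\bigcup_{k_0}\bigcap_{k\ge k_0}B_k$ is therefore $\mu$-null, and the symmetric argument kills the eventually-last event, giving $\mu(\TTT_\mathsf{STR})=1$.

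Finally, residuality is immediate from minimality of $\TTT$: any $G$-invariant Borel probability measure on a minimal system has full support (otherwise $\mathrm{supp}(\mu)$ would be a proper closed invariant set), so every set of full $\mu$-measure is dense in $\TTT$; being also $G_\delta$, $\TTT_\mathsf{STR}$ is residual by the Baire category theorem. The main obstacle I anticipate is the speeding-up step for (b) and (c): one must verify both that the composed subtile orderings in the sped-up system still support the same invariance-based uniform-distribution calculation, and that the F\o lner-driven growth of shapes really does yield a \emph{uniform} geometric bound $c<1$ on the ratio $|I(S')|/|S'|$ across all shapes at the chosen sparse levels. Everything else is either a standard invariance computation or a routine Baire-category upgrade.
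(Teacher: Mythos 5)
Your proposal is correct, and it follows the same overall architecture as the paper ($G_\delta$ characterization of $\TTT_\mathsf{STR}$ via general position plus exclusion of the eventually-first and eventually-last central subtile, then full measure, then density from minimality), but the measure-theoretic core is genuinely different. The paper handles everything in one stroke: after speeding up so that, for every shape $S\in\S_k$, the first subtile, the last subtile, and the subtiles not contained in the $K_k$-core jointly occupy at most a fraction $2^{-k}$ of $S$, it bounds the upper Banach density of the union of all such bad subtiles by $2^{-k}$, converts this into a measure bound on the event that $e$ lies in a bad subtile of $T^e_k$ (invoking the density-versus-measure lemmas of \cite{DZ}), and concludes by Borel--Cantelli; general position and the exclusion of order types $\N$ and $-\N$ are killed by the same event. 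You instead isolate the exact invariance statement that, conditionally on the shape of $T^e_k$, the position of $e$ within it is uniformly distributed, and apply it twice: once with the F\o lner property of the shapes to get general position directly, and once, combined with determinism (which makes the whole subtile hierarchy of a tile a function of its shape), to get the geometric bound $c^{k_1-k_0+1}$ on the nested first-subtile events. Your route is more self-contained --- it replaces the imported Banach-density machinery by an elementary shift computation which in substance encodes the same fact --- at the cost of running a separate speeding-up argument for the first/last-subtile events. The two worries you flag are not real obstacles: the uniform ratio $c<1$ at sparse levels follows exactly as you say from the boundedness of $\S_{k_j}$ against $\min_{S'\in\S_{k_{j+1}}}|S'|\to\infty$, and the compatibility of the composed subtile orderings is already granted by the paper's observation that the orders $\prec_{_{\bT}}$, and hence $\TTT_\mathsf{STR}$, are unaffected by speeding up.
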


\begin{proof}
	The set of $\bT=(\T_k)_{k\in\N}\in\TTT$ which are in general position equals
	\begin{equation*}
		\bigcap_{g\in G}\bigcup_{k\in\N}\{\bT\in\TTT:g\in T_k^e\}.
	\end{equation*}
	Further, observe that an element $\bT$ is not straight if and only if either it is not in general position or, from some $k$ onward, the central tile $T^e_{k-1}$ of $\T_{k-1}$ equals the first subtile of $T^e_k$, or, from some $k$ onward, $T^e_{k-1}$ equals the last subtile of $T^e_k$. Formally, the set of $\bT$ which are not straight equals
	\begin{multline}\label{ns}
		\bigcup_{g\in G}\bigcap_{k\in\N}\{\bT\in\TTT:g\notin T_k^e\}
		\cup\bigcup_{k_0\in\N}\ \bigcap_{k\ge k_0}\ \bigcup_{S\in\S_{k-1}}\ \bigcup_{c\in S^{-1}}\{\bT\in\TTT: T_{k-1}^e=S'_1c^S_1c\}\\ \cup \bigcup_{k_0\in\N}\ \bigcap_{k\ge k_0}\ \bigcup_{S\in\S_{k-1}}\ \bigcup_{c\in S^{-1}}\{\bT\in\TTT: T^e_{k-1}=S'_{l(S)}c^S_{l(S)}c\},
	\end{multline}
	where $T^e_k = Sc=\bigsqcup_{i=1}^{l(S)}S'_ic_i^Sc$ is the decomposition of the central tile of $\T_k$ into subtiles (in the notation of \eqref{dect}). In \eqref{ns}, the three sets in curly brackets are clearly clopen, and so are the finite unions over $S$ and $c$. Thus the set defined by the entire formula is of type~$F_\sigma$. We have shown that the set of straight elements $\bT$ is of type $G_\delta$. That it is dense, and thus residual, will follow once we prove its largeness in the sense of measure (in a minimal system every set of full measure for at least one \im\ is dense).
	
	We pass to proving largeness in the sense of measure. Let $(K_k)_{k\in\N}$ be an increasing (by inclusion) \sq\ of finite subsets of $G$ such that $\bigcup_{k\in\N}K_k =G$. An~element $\bT=(\T_k)_{k\in\N}$ which satisfies, for infinitely many indices $k$, the condition, that $e$ is contained in the $K_k$-core of the central tile $T_k^e$ of $\T_k$, is in general position. Indeed, in such case $T_k^e$ contains $K_k$ for infinitely many $k$, hence the union of all central tiles equals $G$.\footnote{Observe that since $T^e_k$ contains the unit we have $e\in (T_k^e)_{K_k}\iff K_k\subset T^e_k$.} 
	
	Clearly, speeding up the tiling system does not affect the set of straight elements, hence, in this proof, we are free to speed up the tiling system as much as we need (from now on, $\TTT$ will denote the tiling system after speeding up). By speeding up we can achieve that every $\bT=(\T_k)_{k\in\N}$ satisfies the following condition: for every $k\in\N$ and every $S\in\S_k$, the union of the following subtiles of $S$: 
	\begin{itemize}
		\item the first and last one (i.e.\ $S'_1c^S_1$ and $S'_{l(S)}c^S_{l(S)}$ in the 
		notation of \eqref{decs}), and 
		\item those not contained in the $K_k$-core of $S$,
	\end{itemize}
	has cardinality at most $\frac{|S|}{2^k}$. If so, then for any $\bT\in\TTT$ and for every $k\in\N$, the union of all subtiles of the tiles $T$ of $\T_k$ which are either the first or last in the enumeration of the subtiles of $T$, or are not contained in the $K_k$-core of $T$, has upper Banach density\footnote{The upper Banach density of a set $D\subset G$ equals $\inf_{F}\sup_{g\in G}\frac{|D\cap Fg|}{|F|}$, where $F$ ranges over finite subsets of $G$. For more information about upper Banach density and its relation with the invariant measures see e.g.\ \cite[Section~6.2]{DZ}.} at most $2^{-k}$ (see e.g.\ \cite[Lemma~4.15]{DZ}). This, in turn, implies that for any invariant Borel probability measure $\nu$ on $\TTT$, the measure of those $\bT$ for which~$e$ belongs to the first or last subtile of the $T_k^e$ or to a subtile not contained in the $K_k$-core of $T_k^e$, is less than $2^{-k}$ (see e.g.\  \cite[Proposition~6.10~(1)]{DZ}). By the Borel--Cantelli lemma, the set of $\bT$ for which the above event happens infinitely many times, has measure zero for $\nu$. By the formula \eqref{ns}, this set (just shown to be of universal \im\ zero) contains all $\bT\in\TTT$ which are not straight. 
\end{proof}

\begin{defn}\label{ufp}
	A multiorder $(\tilde\O,\nu,G)$ is \emph{uniformly F\o lner} if for any finite set $K\subset G$ and $\eps>0$ there exists $n\in\N$ such that for $\nu$-almost every order $\prec\ \in\tilde\O$, any order interval $[a,b]^\prec$ of length at least $n$ is $(K,\eps)$-\inv.
\end{defn}

\begin{thm}\label{muo} Let $\mu$ be an invariant probability measure on an ordered tiling system $\TTT$. The assignment $\bT\mapsto\ \prec_{_{\bT}}$ is a factor map from $(\TTT,\mu,G)$ to the multiorder $(\tilde{\O},\nu,G)$, where $\tilde{\O}=\{\prec_{_{\bT}}:\bT\in \TTT_{\mathsf{STR}}\}$ and $\nu$ is the image of $\mu$ by the above map. Moreover, $\tilde{\O}$ is a uniformly F\o lner multiorder on~$G$.
\end{thm}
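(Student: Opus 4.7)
The plan is to verify three things in sequence: that $\bT\mapsto\,\prec_{_{\bT}}$ is a well-defined measurable map into $\tilde\O\subset\tilde{\mathcal O}$ (the Polish space of orders of type $\Z$), that it intertwines the $G$-actions, and finally that the resulting multiorder is uniformly F\o lner. The first point is already almost free: by Lemma \ref{large}, $\TTT_{\mathsf{STR}}$ is a $G$-invariant Borel set of full $\mu$-measure, and on it the order $\prec_{_{\bT}}$ is well-defined and of type $\Z$ by the very definition of straightness. For measurability, it suffices to check that for each pair $a,b\in G$, the set $\{\bT\in\TTT_{\mathsf{STR}} : a\prec_{_{\bT}} b\}$ is Borel; but this set is a countable union over $k\in\N$ and over pairs $(S,c)$ of shape/center data, of clopen cylinders specifying that $k$ is the first index at which $a,b$ lie in a common tile $Sc$ of $\T_k$ and that $a$ lies in an earlier subtile of $Sc$ than $b$ in the prescribed ordering \eqref{dect}. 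Pushing $\mu$ forward then yields a Borel probability measure $\nu$ on $\tilde\O$.

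Next I would prove equivariance, i.e.\ that $\prec_{_{g(\bT)}}\,=\,g(\prec_{_{\bT}})$ in the sense of \eqref{ac}. The shift $g$ sends a tile $T$ of $\T_k$ to the tile $Tg^{-1}$ of $g(\T_k)$, and since the subtile-ordering depends only on the shape $S$ (which is unchanged), the $i$th subtile of $Tg^{-1}$ is the right-translate by $g^{-1}$ of the $i$th subtile of $T$. Consequently, for any two points $a,b\in G$, the smallest $k$ such that $a,b$ share a tile of $g(\T_k)$ equals the smallest $k$ such that $ag,bg$ share a tile of $\T_k$, and $a$ lies in the earlier subtile of $g(\T_k)$ iff $ag$ lies in the earlier subtile of $\T_k$. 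Hence $a\prec_{_{g(\bT)}}b \iff ag\prec_{_{\bT}}bg$, which is exactly \eqref{ac}. Equivariance together with $G$-invariance of $\mu$ gives $G$-invariance of $\nu$, so $(\tilde\O,\nu,G)$ is a genuine multiorder and $\bT\mapsto\,\prec_{_{\bT}}$ is a measure-theoretic factor map.

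Finally, for uniform F\o lnerness: given any finite $K\subset G$ and $\eps>0$, Theorem \ref{oi} produces an $n=n(K,\eps)$ such that for \emph{every} $\bT\in\TTT_{\mathsf{STR}}$, every order-interval $[a,b]^{\bT}$ of length $\ge n$ is $(K,\eps)$-invariant. Since $\nu$ is concentrated on $\{\prec_{_{\bT}} : \bT\in\TTT_{\mathsf{STR}}\}$, the same uniform bound holds $\nu$-almost surely, which is precisely the condition in Definition \ref{ufp}.

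The only point that requires real care is the equivariance step, where one must unravel the interaction between the symbolic shift on $\TTT$ (acting on tiles on the right by $g^{-1}$) and the right-translation appearing in the definition \eqref{ac} of the $G$-action on $\tilde\O$; once the bookkeeping $T\mapsto Tg^{-1}$ is lined up with the subtile enumeration inherited from the shape, the identity $\prec_{_{g(\bT)}}\,=\,g(\prec_{_{\bT}})$ drops out. All remaining verifications are routine and, thanks to Lemma \ref{large} and Theorem \ref{oi}, require no new estimates.
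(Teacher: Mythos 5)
Your proposal is correct and follows essentially the same route as the paper's proof: equivariance is checked by the same tile-bookkeeping $T\mapsto Tg^{-1}$ leading to $a\prec_{_{g(\bT)}}b\iff ag\prec_{_{\bT}}bg$, Lemma \ref{large} supplies that $\TTT_{\mathsf{STR}}$ is invariant and of full measure so that $\nu$ is supported on type-$\Z$ orders, and the uniform F\o lner property is quoted directly from Theorem \ref{oi}. Your added detail on Borel measurability via cylinder sets is a harmless elaboration of what the paper simply asserts.
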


We can now strengthen our Corollary \ref{34} (and Theorem \ref{exi}):
\begin{cor}\label{muo1}
	On any countable amenable group $G$ there exists a uniformly F\o lner multiorder of entropy zero.
\end{cor}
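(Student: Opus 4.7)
The plan is to combine the existence theorem for zero-entropy tiling systems (Theorem \ref{dhz}) with the construction of the tiling-based multiorder (Theorem \ref{muo}). First, I would invoke Theorem \ref{dhz} (together with Remark \ref{mini}) to obtain a F\o lner, deterministic, minimal system of tilings $\TTT$ on $G$ whose topological entropy is zero. Next, I would turn $\TTT$ into an \emph{ordered} tiling system simply by fixing, once and for all, an arbitrary linear ordering of the set $C(S)$ of subtile-centers for every shape $S\in\S_k$ and every $k\in\N$. Choosing these orderings requires no additional structure and does not alter the underlying dynamics, so the resulting ordered tiling system $\TTT$ still has topological entropy zero.

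Now I would select an invariant Borel probability measure $\mu$ on $\TTT$ (for instance, any ergodic one, which exists by amenability of $G$). Since $h_{\mathrm{top}}(\TTT,G)=0$, the variational principle forces $h(\mu,G)=0$. By Lemma \ref{large}, the set $\TTT_\mathsf{STR}$ of straight elements has full $\mu$-measure, so the assignment $\bT\mapsto\ \prec_{_{\bT}}$ is defined $\mu$-almost everywhere. By Theorem \ref{muo}, this assignment is a measure-theoretic factor map from $(\TTT,\mu,G)$ onto a uniformly F\o lner multiorder $(\tilde{\O},\nu,G)$, where $\nu$ is the push-forward of $\mu$.

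Finally, since entropy does not increase under factor maps, we conclude
\begin{equation*}
h(\nu,G)\le h(\mu,G)=0,
\end{equation*}
so $(\tilde{\O},\nu,G)$ is a uniformly F\o lner multiorder of entropy zero on $G$, as required.

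The argument is essentially a bookkeeping combination of previously established results; the only genuine ingredients are Theorem \ref{dhz} (existence of a zero-entropy tiling system, which is the real content) and Theorem \ref{muo} (which packages the uniform F\o lner property into the definition of the induced multiorder). The main point worth checking carefully is the last step: that the ordered tiling system has the same topological entropy as the underlying tiling system from Theorem \ref{dhz}, i.e.\ that equipping each shape with a linear order on its subtile-centers does not introduce new symbolic complexity. This is immediate because the chosen orderings depend only on the (finitely many) shapes, not on positions in $G$, so the symbolic extension is finite-to-one and in fact determined by the shape alphabet itself; hence the entropy is preserved and the argument goes through.
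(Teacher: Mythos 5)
Your proposal is correct and follows essentially the same route as the paper: take a zero-entropy tiling system from Theorem \ref{dhz}, order it arbitrarily (which adds no dynamical complexity since the ordering is attached to shapes, not to positions), and pass to the induced multiorder via Theorem \ref{muo}, whose entropy vanishes because it is a factor. Your extra care about the variational principle and the full measure of $\TTT_\mathsf{STR}$ merely makes explicit steps the paper leaves implicit.
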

\begin{proof}
	It suffices to use a tiling system of entropy zero (whose existence is guaranteed by \cite[Theorem 5.2]{DHZ}), and create from it an ordered tiling system. This ordered tiling system also has entropy zero (because it is a \tl\ factor of the unordered tiling system), and so does the resulting multiorder (for the same reason). By Theorem \ref{muo}, this multiorder is uniformly F\o lner.
\end{proof}

\begin{proof}[Proof of Theorem \ref{muo}]
	The assignment $\bT\mapsto\ \prec_{_{\bT}}$ is Borel-measurable and satisfies the equivariance condition $g(\bT)\mapsto g(\prec_{_{\bT}})$. Indeed, observe that given $\bT\in\TTT_{\mathsf{STR}}$ and an element $g\in G$, the order $\prec_{_{g(\bT)}}$ is obtained from $\prec_{_{\bT}}$ by shifting:
	\begin{equation*}
		a \prec_{_{g(\bT)}} b \iff ag \prec_{_{\bT}} bg,
	\end{equation*}
	which means that $\prec_{_{g(\bT)}}=g(\prec_{_{\bT}})$ as in Definition \ref{mo}.
	By Lemma \ref{large}, the set $\TTT_{\mathsf{STR}}$ is $G$-invariant, and carries all \im s of the system $\TTT$. Thus the family of orders $\{\prec_{_{\bT}}:\bT\in \TTT_{\mathsf{STR}}\}$ is also $G$-\inv\ and it carries precisely the \im s which are images (via the factor map $\bT\mapsto\ \prec_{_{\bT}}$) of the \im s supported by $\TTT$. Since $\TTT$ is a compact metric space on which $G$ acts by homeomorphisms, the collection of \im s on $\TTT$ is nonempty and thus so is the collection of \im s supported by the family $\{\prec_{_{\bT}}:\bT\in \TTT_{\mathsf{STR}}\}$. The uniform F\o lner property was proved in Theorem \ref{oi}.
\end{proof}

\medskip

We illustrate the idea of a tiling-based multiorder with three examples.

\begin{exam}\label{56}
	Let $G=(\Z,+)$. Let $\TTT$ consist of $\bT=(\T_k)_{k\in\N}$ such that each $\T_k$ partitions $\Z$ into intervals of equal lengths, say $2^k$ (by congruency, this condition determines $\TTT$ completely, the tiling system is conjugate to the dyadic odometer). 
	Let the subtiles of each tile $T$ be enumerated from left to right. Then, for every $\bT\in\TTT_{\mathsf{STR}}$, the order $\prec_{_{\bT}}$ coincides with the natural order in $\Z$. 
\end{exam}
Let us mention, that there are (countably many) non-straight elements $\bT\in\TTT$. They fail the ``in general position'' condition and they generate partial orders which coincide with the natural order on each of the halflines $(-\infty, n]$ and $[n+1,\infty)$ (for some $n\in\Z$), while elements from different halflines are incomparable.
\begin{exam}\label{Z-ord}
	Let $G$ and $\TTT$ be as in Example \ref{56} and let us order the two subtiles of the shape of order $k$ from left to right for even $k$ and from right to left for odd $k$. Then we will get a multiorder constisting of non-standard orders as in the figure below.
	\begin{figure}[h]
		\includegraphics[width=0.9\textwidth]{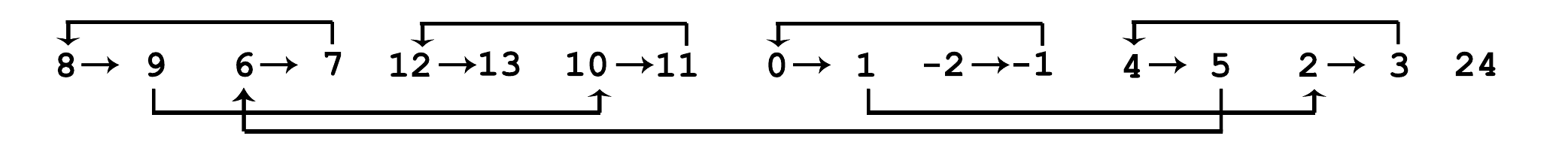}
		\caption{An example of a non-standard order on $\Z$}
	\end{figure}
\end{exam}
\begin{exam}
	Let $G=(\Z^2,+)$. Let $\TTT$ be such that, for each $k\ge 0$, the family of shapes $\S_{k+1}$ consists of four squares of equal dimensions $2^{k+1}\times 2^{k+1}$, identical as sets, but with different labels, say, $\sqcup_{k+1}$, $\sqsubset_{k+1}$, $\sqsupset_{k+1}$ and $\sqcap_{k+1}$. Each shape is subdivided into four subtiles (four identical squares), representing three of the available shapes $\sqcup_k$, $\sqsubset_k$, $\sqsupset_k$ and $\sqcap_k$ (one of them appearing twice). The shapes are subdivided as follows
	(the numeric matrices show the enumeration of the subtiles):
	\begin{gather*}
		\sqcup_{k+1}=
		\begin{bmatrix}
			\sqsupset_k &\sqsubset_k\\
			\sqcup_k & \sqcup_k
		\end{bmatrix} 
		\begin{bmatrix}
			1 & 4\\
			2 & 3
		\end{bmatrix},\ 
		\sqsubset_{k+1}\,=
		\begin{bmatrix}
			\sqsubset_k &\sqcup_k\\
			\sqsubset_k &\sqcap_k
		\end{bmatrix}
		\begin{bmatrix}
			3 & 4\\
			2 & 1
		\end{bmatrix},
		\\ 
		\sqsupset_{k+1}\,=
		\begin{bmatrix}
			\sqcup_k &\sqsupset_k\\
			\sqcap_k & \sqsupset_k
		\end{bmatrix}
		\begin{bmatrix}
			1 & 2\\
			4 & 3
		\end{bmatrix},\
		\sqcap_{k+1}=
		\begin{bmatrix}
			\sqcap_k &\sqcap_k\\
			\sqsupset_k & \sqsubset_k
		\end{bmatrix}
		\begin{bmatrix}
			3 & 2\\
			4 & 1
		\end{bmatrix}.
	\end{gather*}
	Under this rule, the ordering of $\Z^2$ follows the familiar pattern of the so-called \emph{Hilbert space-filling curve} (see Figure~\ref{hilbert_fig}). However, instead of making the curve denser and denser in each step (as it is done in the Hilbert curve's construction), we extend it to larger and larger squares in $\Z^2$, eventually filling $\Z^2$ entirely (as long as $\bT$ is straight). It can be checked that each $\bT\in\TTT_{\mathsf{STR}}$ generates a different bi-infinite Hilbert curve.
\end{exam}

\begin{figure}[h]
	\includegraphics[width=5cm]{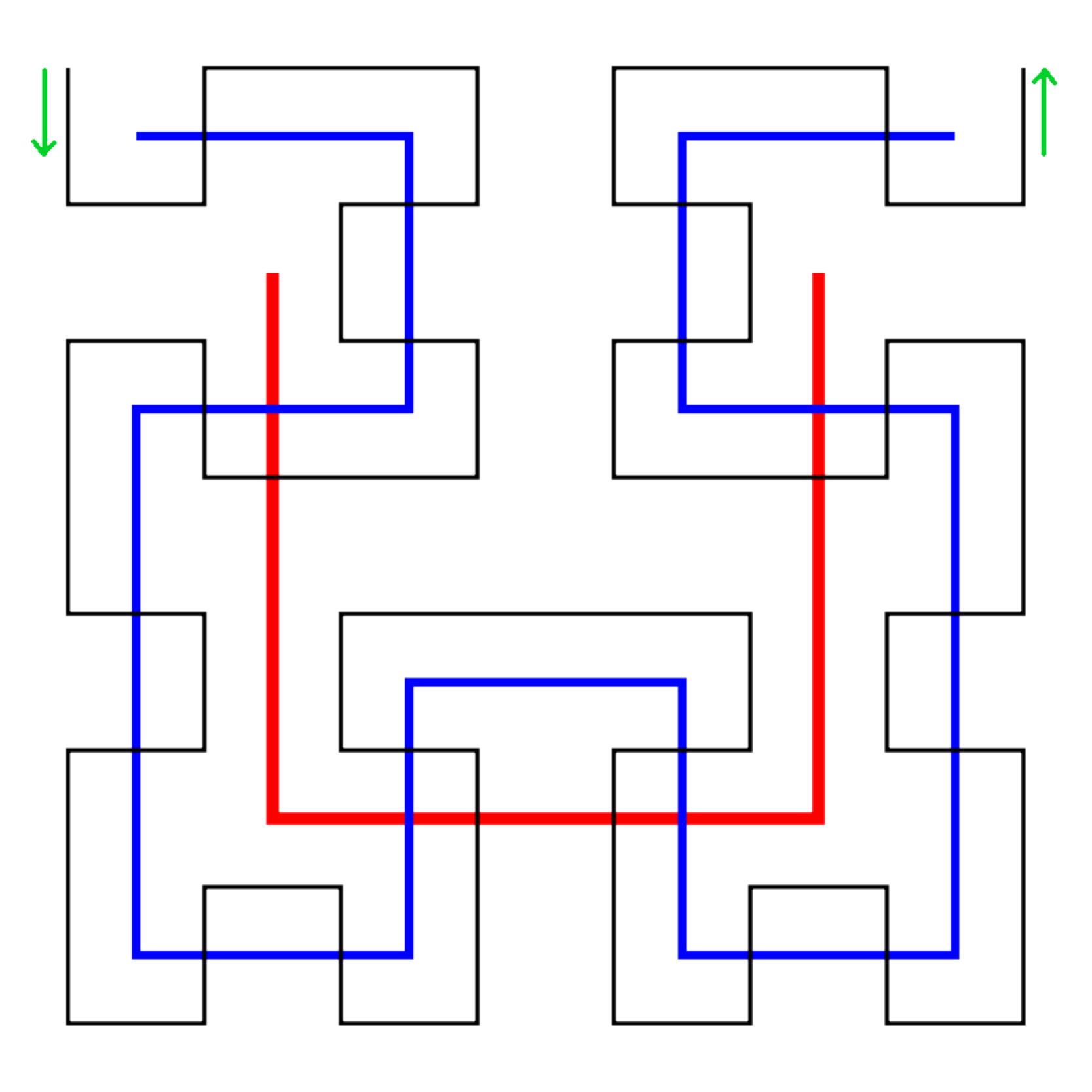}
	\caption{The Hilbert curve pattern}
	\label{hilbert_fig}
\end{figure}
\end{appendix}

\section*{Acknowledgements}
The authors would like to thank Tom Meyerovitch for an inspiring discussion in which he predicted Theorem \ref{TM}. He also provided an outline of the proof, and suggested the connection between multiorders and orbit equivalence to $\Z$-actions. We also thank Alexandre Danilenko for his comments in which he indicated alternative proofs of several results presented in this paper, and for a finding a flaw in the first version of Theorem~\ref{R-s}. Finally, we thank the anonymous referee for valuable suggestions which helped to improve the paper.

Part of the work was carried out during a visit of Piotr Oprocha and Guohua
Zhang in 2019 to the Faculty of Pure and Applied Mathematics, Wroc\l aw University of Science and Technology.
Tomasz Downarowicz is supported by National Science Center, Poland (Grant HARMONIA
No. 2018/30/M/ST1/00061) and by the Wroc\l aw University of Science and Technology.
Piotr Oprocha was supported by National Science Centre, Poland (Grant No. 2019/35/B/ST1/02239). Guohua Zhang is supported by National Natural
Science Foundation of China (Grants No. 11722103 and 11731003).

\vskip 16pt

\end{document}